\documentclass[12pt]{amsart}
\usepackage{amssymb}
\usepackage{amsfonts}
\usepackage{latexsym}
\usepackage{amscd}
\usepackage[mathscr]{euscript}
\usepackage{xy} \xyoption{all}
\usepackage[active]{srcltx}
\usepackage{color,graphics}

\newcommand{\exend}{\unskip\nobreak\hfill$\blacktriangleleft$}

\usepackage{tikz,tkz-graph,tikz-cd}
\usetikzlibrary{matrix,arrows,calc,automata,trees}

\usepackage[shortlabels]{enumitem}
\usepackage{url}

\vfuzz2pt % Don't report over-full v-boxes if over-edge is small
\hfuzz2pt % Don't report over-full h-boxes if over-edge is small

%\addtolength{\textwidth}{3cm}
%\addtolength{\oddsidemargin}{-1.5cm}
%\addtolength{\evensidemargin}{-1.5cm}

\addtolength{\textwidth}{4cm} \addtolength{\oddsidemargin}{-2cm}
\addtolength{\evensidemargin}{-2cm} %%\textheight=22.15truecm
\setlength\parindent{0pt}

\newcommand*{\LargerDot}{\scalebox{2}{$.$}}

\newcommand{\act}{\curvearrowright}
\newcommand{\andspace}{\quad \text{and} \quad}
\newcommand{\If}{\text{if }}
\newcommand{\ini}{\mathfrak{i}}
\newcommand{\ter}{\mathfrak{t}}
\newcommand{\niceblue}{rgb:red,1;green,2;blue,3}
\newcommand{\nicegreen}{rgb:red,1;green,4;blue,2}
\newcommand{\nicered}{rgb:red,4;green,1;blue,2}

\newcommand{\bigslant}[2]{{\raisebox{.1em}{$#1 \hspace{-2pt}$}\left/\raisebox{-.1em}{$#2$}\right.}}
\DeclareMathOperator{\id}{id} 

\theoremstyle{plain}
\newtheorem{theorem}{Theorem}[section]
\newtheorem{lemma}[theorem]{Lemma}
\newtheorem{proposition}[theorem]{Proposition}
\newtheorem{corollary}[theorem]{Corollary}

\theoremstyle{definition}
\newtheorem{definition}[theorem]{Definition}
\newtheorem{example}[theorem]{Example}

\newtheorem*{cexample}{Example~\ref{ex} (continued)} % same for example numbers

\newtheorem{remark}[theorem]{Remark}
\newtheorem*{remark*}{Remark}

\newtheorem*{assumption*}{Assumption}

\begin{document}
\allowdisplaybreaks
\null\vskip-1.5cm

\title[Nuclearity and exactness]{On nuclearity and exactness of the tame $C^*$-algebras associated with finitely separated graphs}%
\author{Matias Lolk}
\address{Department of Mathematical Sciences, University of Copenhagen, 2100 Copenhagen, Denmark.}\email{lolk@math.ku.dk}
%\dedicatory{}
\thanks{Supported by the Danish National Research Foundation through the Centre for Symmetry and Deformation (DNRF92)}
\keywords{Separated graph, graph algebra, amenable partial action, exactness, nuclearity}
\date{\today}

\begin{abstract}
We introduce a graph theoretic property called Condition (N) for finitely separated graphs and prove that it is equivalent to both nuclearity and exactness of the associated universal tame graph $C^*$-algebra.
\end{abstract}

\maketitle

\section*{Introduction}

A \textit{finitely separated graph} is a directed graph with a partition of the edges into finite subsets, which might be thought of as an edge colouring, so that edges with distinct ranges have different colours. To any such graph $(E,C)$, Ara and Exel introduced a $C^*$-algebra $\mathcal{O}(E,C)$ in \cite{AE,AE2}, referred to as the universal \textit{tame} $C^*$-algebra of $(E,C)$. It is generated by the vertices and the edges of the graph with relations similar to the ordinary Cuntz-Krieger relations, taking into account the colouring, so that the edge set $E^1$ defines a tame set of partial isometries. Among many other results, they provided a very useful dynamical description of $\mathcal{O}(E,C)$ when $(E,C)$ is finite and bipartite, and this was generalised to finitely separated graphs by the author in \cite{Lolk}. Specifically, $\mathcal{O}(E,C)$ may be identified with a universal crossed product $C_0(\Omega(E,C)) \rtimes \mathbb{F}$ for a partial action $\theta^{(E,C)}$ of a free group $\mathbb{F}=\mathbb{F}(E^1)$ on a locally compact, zero-dimensional Hausdorff space $\Omega(E,C)$. The potency of these partial actions was immediately demonstrated when they were used to answer a question of R\o rdam and Sierakowski \cite{RS} about relative type semigroups in the negative \cite[Section 7]{AE}. 
\medskip \\
Recently, Ara and the author have introduced the more general notion of a \textit{convex subshift} \cite[Section 3]{AL} and shown that all convex subshifts of \textit{finite type} arise, up to Kakutani equivalence, as the partial action associated with a finite bipartite graph. As such, finitely separated graphs may be viewed as combinatorial models for a wide class of partial actions, which includes both one-sided and two-sided shift spaces, but which also includes many new and previously unstudied dynamical systems. \medskip \\
Nuclearity and exactness of $\mathcal{O}(E,C)$ have not been systematically studied before, but both nuclearity and non-exactness have been observed in a number of examples. If $E$ is any column-finite graph, it may be regarded as a finitely separated graph by equipping it with the \textit{trivial separation} $\mathcal{T}$, and in this case, the tame $C^*$-algebra $\mathcal{O}(E,\mathcal{T})$ is simply the classical graph $C^*$-algebra $C^*(E)$, hence nuclear. Likewise, if $\mathcal{X}$ is any two-sided subshift of finite type, then the crossed product $C(\mathcal{X}) \rtimes \mathbb{Z}$ is Morita equivalent to $\mathcal{O}(E,C)$ for an appropriate finite bipartite separated graph by \cite[Proposition 6.8]{AL}, so in this case $\mathcal{O}(E,C)$ is nuclear as well. In the other direction, one can easily identify $C^*(\mathbb{F}_n)$ with a tame separated graph $C^*$-algebra: the graph is nothing but a single vertex with $n$ loops of different colours. A much more interesting class of non-exact examples were considered by Ara, Exel and Katsura in  \cite{AEK}, where they studied $C^*$-algebras $\mathcal{O}_{m,n}$ for $2 \le m < n < \infty$ that might be considered as two-parameter versions of the Cuntz algebras. \medskip \\
In this paper, we characterise nuclearity and exactness in terms a graph-theoretic property that we call Condition (N). To this end, we first introduce a notion of topological amenability for partial actions, and prove, using the theory of groupoid $C^*$-algebras, that a partial action of a discrete group on a locally compact Hausdorff space is topologically amenable if and only if the corresponding crossed products are nuclear (Theorem~\ref{thm:TopAmenThm}). Given any finitely separated graph $(E,C)$, we then identify two complementary subgraphs $(E_{\textup{Br}}, C^{\textup{Br}})$ and $(E_{\textup{BF}}, C^{\textup{BF}})$, called the \textit{branching subgraph} and the \textit{branch free subgraph}, respectively. The $C^*$-algebra $\mathcal{O}(E_{\textup{Br}},C^{\textup{Br}})$ naturally appears as a quotient of $\mathcal{O}(E,C)$, and in Section 3, we prove that both nuclearity and exactness of $\mathcal{O}(E_{\textup{Br}},C^{\textup{Br}})$ is equivalent to Condition (N). We can use the approach of \cite{AEK} to establish necessity (Proposition~\ref{prop:Exactness}), but sufficiency takes more work: The main step is the construction of a \textit{proper orientation} of the branching subgraph (Theorem~\ref{thm:SpGrFølner}) in the presence of Condition (N). In Section 4, we prove that $\mathcal{O}(E_{\textup{BF}},C^{\textup{BF}})$ is \textit{always} nuclear, and that nuclearity of $\mathcal{O}(E_{\textup{Br}},C^{\textup{Br}})$ and $\mathcal{O}(E_{\textup{BF}},C^{\textup{BF}})$ implies nuclearity of $\mathcal{O}(E,C)$, before putting everything together in section 5 to obtain our main theorem (Theorem~\ref{thm:MainThm}). We finally study a number of examples.

\section{Preliminary definitions}
In this section, we recall the necessary definitions and results from the existing theory on algebras associated with separated graphs in a slightly condensed version -- the reader may consult \cite[Section 2]{AL} and \cite[Section 1]{Lolk} for more details. Most importantly, we describe the $C^*$-algebra $\mathcal{O}(E,C)$ as a universal crossed product for a partial action.

\begin{definition}
A \textit{finitely separated graph} $(E,C)$ is a graph $E=(E^0,E^1,r,s)$ together with a \textit{separation} $C=\bigsqcup_{v \in E^0} C_v$, where each $C_v$ is a partition of $r^{-1}(v)$ into non-empty finite subsets. In case $r^{-1}(v)=\emptyset$, we simply take $C_v$ to be the empty partition, and for any edge $e \in E^1$, we will denote the element of $C$ containing $e$ by $[e]$.
\end{definition}

\begin{example}\label{ex}
Below is an example of a finite separated graph:
\begin{center}
\begin{tikzpicture}[scale=0.70]
 \SetUpEdge[lw         = 1.5pt,
            labelcolor = white]
  \tikzset{VertexStyle/.style = {draw,shape = circle,fill = white, inner sep=4pt,minimum size=10pt,outer sep=4pt}}

  \SetVertexNoLabel

  \Vertex[x=-6,y=0]{u1}  
  \Vertex[x=-3,y=0]{u2}
  \Vertex[x=0,y=0]{u3}
  \Vertex[x=6,y=0]{u4}
  \Vertex[x=9,y=0]{u5}
  
  \Vertex[x=-3,y=3]{u7}
  \Vertex[x=3,y=3]{u8}
  \Vertex[x=-6,y=3]{u6}
  \Vertex[x=6,y=3]{u9}

  \Vertex[x=-6,y=-3]{u10}  
  \Vertex[x=-3,y=-3]{u11}
  \Vertex[x=3,y=-3]{u12}
  \Vertex[x=6,y=-3]{u13}

  \tikzset{EdgeStyle/.style = {->,color={\niceblue}}}
  \Loop[dir=EA](u5)  
  \Edge[](u2)(u3)
  \Edge[](u4)(u3)
  \Edge[](u4)(u5)
  \Edge[](u6)(u1)
  \Edge[label=$2$](u9)(u4)
  
  \tikzset{EdgeStyle/.style = {->,bend left=20,color={\niceblue}}}
  \Edge[](u2)(u1)
  \Edge[](u8)(u3)
  \Edge[label=$2$](u11)(u3)
  
  \tikzset{EdgeStyle/.style = {->,bend right=20,color={\niceblue}}}
  \Edge[label=$3$](u7)(u3)
  \Edge[](u12)(u3)
  
  \tikzset{EdgeStyle/.style = {->,color={\nicegreen}}}
  \Edge[label=$2$](u13)(u4)
  \Edge[](u1)(u10)
  
  \tikzset{EdgeStyle/.style = {->,bend left=20,color={\nicegreen}}}
  \Edge[](u12)(u3)
  
  \tikzset{EdgeStyle/.style = {->,bend right=20,color={\nicegreen}}}
  \Edge[](u11)(u3)
  \Edge[label=$2$](u2)(u1)
  
  \tikzset{EdgeStyle/.style = {->,bend left=20,color={\nicered}}}
  \Edge[](u7)(u3)
  
  \tikzset{EdgeStyle/.style = {->,bend right=20,color={\nicered}}}
  \Edge[](u8)(u3)
\end{tikzpicture}
\end{center}
We gladly use the same colour for edges with different ranges when depicting separated graphs -- otherwise one would need nine colours here -- so the colouring should only be understood as a partition of the edges going into a given vertex. The numbering indicates the number of edges, so that we may simply write a number, say $42$, instead of visually representing each of the $42$ edges. This particular graph will serve as our main example throughout the paper. \exend
\end{example}

Recall that a set of partial isometries $S$ is called \textit{tame} if every product formed from $S \cup S^*$ is again a partial isometry.

\begin{definition}[{\cite[Definition 2.4]{AE2}}]
Let $(E,C)$ denote a finitely separated graph. The \textit{universal tame graph} $C^*$\textit{-algebra} $\mathcal{O}(E,C)$ is the universal $C^*$-algebra generated by $E^0 \sqcup E^1$ with relations
\begin{enumerate}[leftmargin=2cm,rightmargin=2cm]
\item[(V)] $uv = \delta_{u,v}v$ and $u=u^*$ for $u,v \in E^0$,
\item[(E)] $es(e)=r(e)e=e$ for $e \in E^1$,
\item[(SCK1)] $e^*f = \delta_{e,f}s(e)$ if $[e]=[f]$,
\item[(SCK2)] $v = \sum_{e \in X}ee^*$ for all $v \in E^0$ and $X \in C_v$,
\item[(T)] $E^1 \subset \mathcal{O}(E,C)$ is tame.
\end{enumerate}
The reader should note that we use the convention of \cite{AE}, \cite{AE2}, \cite{AL} and \cite{Lolk}, often referred to as the \textit{Raeburn-convention}, opposite to the one used in \cite{AG} and \cite{AG2}.
\end{definition}

\begin{remark}
A subgraph $(F,D)$ of $(E,C)$ is called \textit{complete} if 
$$D_v=\{X \in C_v \mid X \cap F^1 \ne \emptyset \}$$
for every $v \in F^0$, and by universality there is an induced $*$-homomorphism $\mathcal{O}(F,D) \to \mathcal{O}(E,C)$. Since every finitely separated graph is the direct limit of its finite complete subgraphs (see \cite[Section 3]{AG2} for the precise meaning of this), and $\mathcal{O}$ is a continuous functor from the category of finitely separated graphs \cite[Proposition 7.2]{AE2}, we see that $\mathcal{O}(E,C)$ may always be approximated by $C^*$-algebras $\mathcal{O}(F,D)$ for $(F,D)$ a finite separated graph. \exend
\end{remark}

We now recall a bit of terminology related to separated graphs from \cite{Lolk}.

\begin{definition}
Let $(E,C)$ denote a (finitely) separated graph. The \textit{double} $\widehat{E}$ of $E$ is the graph given by $\widehat{E}^0:=E^0$ and $\widehat{E}^1=E^1 \sqcup \{e^{-1} \mid e \in E^1\}$ with range and source maps extended by $r(e^{-1}):=s(e)$ and $s(e^{-1})=r(e)$. An \textit{admissible path} $\alpha$ in $(E,C)$ is a path (read from the right) in $\widehat{E}$ such that
\begin{enumerate}
\item any subpath $ef^{-1}$ with $e,f \in E^1$ satisfies $e \ne f$,
\item any subpath $e^{-1}f$ with $e,f \in E^1$ satisfies $[e] \ne [f]$.
\end{enumerate}
We regard the vertices $v \in E^0$ as the \textit{trivial} admissible paths with $r(v):=v=:s(v)$, and if $\alpha$ is a non-trivial admissible path, we will write $\ini_d(\alpha)$ and $\ter_d(\alpha)$ for the initial and terminal symbol of $\alpha$, respectively; for instance 
$$\ini_d(ef^{-1})=f^{-1} \andspace \ter_d(ef^{-1})=e.$$
We then extend the range and source functions to admissible paths by the formulas $r(\alpha):=r(\ter_d(\alpha))$ and $s(\alpha):=s(\ini_d(\alpha))$. For admissible paths $\alpha$ and $\beta$, we will denote the concatenation by $\beta\alpha$, and we will always write $\beta \cdot \alpha$ if we allow for cancellation of mutual inverses. 
\medskip \\
A \textit{closed path} in $(E,C)$ is a non-trivial admissible path $\alpha$ with $r(\alpha)=s(\alpha)$, and $\alpha$ is called a \textit{cycle} if the concatenation $\alpha\alpha$ is an admissible path as well. Either way, we shall say that $\alpha$ is \textit{based} at $r(\alpha)=s(\alpha)$, and $\alpha$ is called \textit{base-simple} if $r(\beta) \ne s(\alpha)$ for all proper subpaths $s(\alpha) < \beta < \alpha$. We finally define a natural partial order $\le$ on the set of admissible paths by
$$\beta \le \alpha \Leftrightarrow \beta \text{ is an initial subpath of } \alpha,$$
and whenever $s(\alpha)=s(\beta)$, we write $\alpha \wedge \beta \le \alpha, \beta$ for the maximal initial subpath. \exend
\end{definition}

The notion of a \textit{partial action} is fundamental to this work, and so we briefly recall the essentials.

\begin{definition}
A partial action $\theta \colon G \act \Omega$ of a discrete group $G$ on a topological space $\Omega$ is a family of homeomorphisms of open subspaces $\{\theta_g \colon \Omega_{g^{-1}} \to \Omega_g\}_{g \in G}$, such that
\begin{itemize}
\item $\theta_g(\Omega_{g^{-1}} \cap \Omega_h) \subset \Omega_{gh}$ for all $g,h \in G$,
\item $\theta_g(\theta_h(x))=\theta_{gh}(x)$ for all $g,h \in G$ and $x \in \Omega_{h^{-1}} \cap \Omega_{h^{-1}g^{-1}}$.
\end{itemize}
For any point $x \in \Omega$, we will write $G^x:=\{g \in G \mid x \in \Omega_{g^{-1}}\}$ for the group elements that can act on $x$. Now if $\theta' \colon G \act \Omega'$ is another partial action, then a map $\varphi \colon \Omega \to \Omega'$ is called $G$\textit{-equivariant} if
\begin{itemize}
\item $\varphi(\Omega_g) \subset \Omega_g'$ for all $g \in G$,
\item $\theta_g'(\varphi(x))=\varphi(\theta_g(x))$ for all $g \in G$ and $x \in \Omega_{g^{-1}}$.
\end{itemize}
Similarly to the above, one can define the concept of a partial action on a $C^*$-algebra, demanding that the domains should be closed two-sided ideals. Hence, if $\Omega$ is a locally compact Hausdorff space, then $\theta$ translates into a partial $C^*$-action $\theta^* \colon G \act C_0(\Omega)$. As is the case for global actions, one can associate both a \textit{full} and a \textit{reduced} crossed product to a partial action, and there is a canonical surjective $*$-homomorphism
$$C_0(\Omega) \rtimes G \to C_0(\Omega) \rtimes_r G,$$
called the \textit{regular representation}. We refer the reader to \cite{Exel} for a comprehensive treatment of the theory of partial actions and their crossed products. \exend
\end{definition}

We now proceed to describe the partial dynamical system associated with a finitely separated graph, introduced in \cite{AE} for finite bipartite graphs and extended to the more general setting in \cite{Lolk}.

\begin{definition}[{\cite[Definition 2.6]{Lolk}}]\label{def:DynamicalPicture}
Suppose that $(E,C)$ is a finitely separated graph, and let $\mathbb{F}$ denote the free group on $E^1$. Given $\xi \subset \mathbb{F}$ and $\alpha \in \xi$, the \textit{local configuration} $\xi_\alpha$ of $\xi$ at $\alpha$ is the set
$$\xi_\alpha:=\{s \in E^1 \sqcup (E^1)^{-1} \mid s \in \xi \cdot \alpha^{-1}\}.$$
Then $\Omega(E,C)$ is the disjoint union of the discrete space $E_{\textup{iso}}^0$ and the set of $\xi \subset \mathbb{F}$ satisfying the following:
\begin{enumerate}
\item[(a)] $1 \in \xi$.
\item[(b)] $\xi$ is \textit{right-convex}: In view of (a), this exactly means that if $e_n^{\varepsilon_n} \cdots e_1^{\varepsilon_1} \in \xi$ for $e_i \in E^1$ and $\varepsilon_i \in \{\pm 1\}$, then $e_m^{\varepsilon_m} \cdots e_1^{\varepsilon_1} \in \xi$ as well for any $1 \le m < n$.
\item[(c)] For every $\alpha \in \xi$, there is some $v \in E^0$ and $e_X \in X$ for each $X \in C_v$, such that 
$$\xi_\alpha = s^{-1}(v) \sqcup \{e_X^{-1} \mid X \in C_v \}.$$
\end{enumerate}
$\Omega(E,C)$ is made into a topological space by regarding it as a subspace $\{0,1\}^\mathbb{F} \sqcup E_{\textup{iso}}^0$. One can easily check that it becomes a zero-dimensional, locally compact Hausdorff space, which is compact if and only if $E^0$ is a finite set. A topological partial action $\theta=\theta^{(E,C)} \colon \mathbb{F} \act \Omega(E,C)$ with compact-open domains is then defined by setting
$$\Omega(E,C)_\alpha := \{\xi \in \Omega(E,C) \mid \alpha^{-1} \in \xi\} \andspace \theta_\alpha(\xi) := \xi \cdot \alpha^{-1}$$
for $\alpha \in \mathbb{F}$ and $\xi \in \Omega(E,C)_{\alpha^{-1}}$. It follows from (a), (b) and (c) above that $\Omega(E,C)_\alpha$ is non-empty if and only if $\alpha$ is an admissible path. We finally set $\Omega(E,C)_{s(e)}:= \Omega(E,C)_{e^{-1}}$ for every $e \in E^1$ and
$$\Omega(E,C)_v := \bigsqcup_{e \in X}\Omega(E,C)_e$$
for every  $X \in C_v$. Note that in case $X \in C_v$ and $v=s(e)$, these two definitions coincide. If $v$ is isolated, we simply set $\Omega(E,C)_v:=\{v\}$. The reader may think of $\Omega(E,C)_v$ as the set of configurations ``starting'' in $v$, and we have $\Omega(E,C)=\bigsqcup_{v \in E^0} \Omega(E,C)_v$. \exend
\end{definition}

We will also need the notion of an \textit{animal}.

\begin{definition}[{\cite[Definition 2.9]{Lolk}}]\label{def:Animal}
An $(E,C)$\textit{-animal} is a right-convex subset $\omega \subset \xi$ of a configuration $\xi \in \Omega(E,C) \setminus E_{\textup{iso}}^0$ such that $\{1\} \subsetneq \omega$. It is called finite if it is a finite set, and we can define a compact subset of $\Omega(E,C)$ by
$$\Omega(E,C)_\omega := \{\xi \in \Omega(E,C) \mid \omega \subset \xi\},$$
which is open if $\omega$ is finite. It is easy to check that if $\{1\} \ne S \subset \mathbb{F}$ is any non-empty subset such that $\alpha \cdot \beta^{-1}$ is admissible for distinct $\alpha,\beta \in S \cup \{1\}$, then the right-convex closure $\langle S \rangle:=\text{conv}(S \cup \{1\})$ of $S \cup \{1\}$ inside $\mathbb{F}$ defines an $(E,C)$-animal. We warn the reader that we have the somewhat confusing identity $\Omega(E,C)_\alpha=\Omega(E,C)_{\{\alpha^{-1}\}}$. 
\end{definition}

Our main tool for studying the tame universal $C^*$-algebra of a separated graph is the following result, which was first obtained for finite bipartite separated graphs in \cite{AE}.

\begin{theorem}[{\cite[Theorem 2.10]{Lolk}}]
Let $(E,C)$ denote a finitely separated graph. Then there is a canonical isomorphism of $C^*$-algebras $\mathcal{O}(E,C) \cong C_0(\Omega(E,C)) \rtimes \mathbb{F}$. \exend
\end{theorem}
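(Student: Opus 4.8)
The plan is to produce mutually inverse $*$-homomorphisms between $\mathcal{O}(E,C)$ and $C_0(\Omega(E,C))\rtimes\mathbb{F}$, using the universal property of $\mathcal{O}(E,C)$ in one direction and that of the full partial crossed product in the other. Write $\Omega:=\Omega(E,C)$ and let $\{\delta_\alpha\}_{\alpha\in\mathbb{F}}$ be the canonical partial representation of $\mathbb{F}$ (in the multiplier algebra) implementing $\theta$, so that $\delta_\alpha\delta_\alpha^*=\mathbb{1}_{\Omega_\alpha}$ and the elements $f\delta_\alpha$ with $f\in C_0(\Omega_\alpha)$ span a dense $*$-subalgebra.

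First I would build $\pi\colon\mathcal{O}(E,C)\to C_0(\Omega)\rtimes\mathbb{F}$. Since the domains of $\theta$ are compact-open, each $\mathbb{1}_{\Omega_\alpha}$ lies in $C_0(\Omega)$, so I may set $p_v:=\mathbb{1}_{\Omega_v}$ for $v\in E^0$ and $s_e:=\mathbb{1}_{\Omega_e}\delta_e=\delta_e$ for $e\in E^1$. Verifying (V)--(SCK2) is then a direct translation of the combinatorics of $\Omega$: relations (V) and (SCK1) amount to the pairwise disjointness of the $\Omega_e$ for distinct $e$ in a common colour class $X\in C_v$ (from condition (c) of Definition~\ref{def:DynamicalPicture} applied at $1$), relation (E) to the inclusion $\Omega_e\subseteq\Omega_{r(e)}$ and the identity $\Omega_{e^{-1}}=\Omega_{s(e)}$, and relation (SCK2) to the decomposition $\Omega_v=\bigsqcup_{e\in X}\Omega_e$. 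For the tameness relation (T) I would use that $C_0(\Omega)$ is commutative: via the partial-representation identities, every word in the $s_e,s_e^*$ collapses to an element $p\,\delta_\alpha$ with $p$ a product of projections $\mathbb{1}_{\Omega_\beta}\in C_0(\Omega)$ and $\alpha$ the reduced word, and since $p$ commutes with $\delta_\alpha\delta_\alpha^*$ this is a partial isometry. The universal property of $\mathcal{O}(E,C)$ yields $\pi$. For surjectivity, its image is a closed $*$-subalgebra containing all $\mathbb{1}_{\Omega_v}$ and all $s_\alpha s_\alpha^*=\mathbb{1}_{\Omega_\alpha}$ (hence $\mathbb{1}_{\Omega_\omega}$ for every finite animal $\omega$) as well as all $\delta_e$; as the projections $\mathbb{1}_{\Omega_v},\mathbb{1}_{\Omega_\alpha}$ separate the points of $\Omega$ and vanish nowhere, the Stone--Weierstrass theorem for $C_0$ shows the image contains $C_0(\Omega)$, and together with the $\delta_e$ this generates everything.

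The substance is the construction of an inverse $\psi\colon C_0(\Omega)\rtimes\mathbb{F}\to\mathcal{O}(E,C)$, for which I would exhibit a non-degenerate covariant pair $(\rho,u)$ of a $*$-homomorphism $\rho\colon C_0(\Omega)\to\mathcal{O}(E,C)$ and a partial representation $u$ of $\mathbb{F}$ in the multipliers of $\mathcal{O}(E,C)$. For an admissible path $\alpha=e_n^{\varepsilon_n}\cdots e_1^{\varepsilon_1}$ set $u_\alpha:=t_{e_n}^{\varepsilon_n}\cdots t_{e_1}^{\varepsilon_1}$ where $t_e:=e$, $t_{e^{-1}}:=e^*$; put $u_\alpha:=0$ when $\alpha$ is not represented by an admissible path, and $u_1:=1$; and for a finite animal $\omega$ set $\rho(\mathbb{1}_{\Omega_\omega}):=\prod_{\alpha\in\omega\setminus\{1\}}u_\alpha u_\alpha^*$, with $\rho(\mathbb{1}_{\Omega_v}):=v$. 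Four things must be checked, all resting on (T) and on the precise shape of the admissibility conditions: (i) the range projections $u_\alpha u_\alpha^*$ commute, essentially the statement that the $*$-subalgebra generated by the tame set $E^1$ has commutative diagonal; (ii) $\rho$ is a well-defined $*$-homomorphism, i.e.\ the relations among the generating projections $\mathbb{1}_{\Omega_\omega},\mathbb{1}_{\Omega_v}$ of $C_0(\Omega)$ --- which are encoded combinatorially by (a)--(c) --- are matched by the corresponding relations among the $u_\alpha u_\alpha^*$ and the $v$, coming from (SCK1), (SCK2) and (T); (iii) $\{u_\alpha\}$ is a partial representation, i.e.\ $u_\alpha^*=u_{\alpha^{-1}}$ and $u_\alpha u_\beta u_{\beta^{-1}}=u_{\alpha\beta}u_{\beta^{-1}}$, where cancellations of type $e^*e=s(e)$ are governed by (SCK1) while the range cancellations $ee^*$ and the vanishing of $u_\gamma$ on non-admissible $\gamma$ are governed by (T) together with the two defining conditions on admissible paths; and (iv) covariance $u_\alpha\rho(f)u_\alpha^*=\rho(\theta_\alpha(f))$ for $f\in C_0(\Omega_{\alpha^{-1}})$, which on $f=\mathbb{1}_{\Omega_\omega}$ reduces to matching the conjugation with the right-translation action $\omega\mapsto\langle\omega\cdot\alpha^{-1}\rangle$ on animals.

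Granting $(\rho,u)$, the crossed-product universal property gives $\psi$ with $\psi(\mathbb{1}_{\Omega_\omega}\delta_\alpha)=\rho(\mathbb{1}_{\Omega_\omega})u_\alpha$, and one checks $\psi\circ\pi$ and $\pi\circ\psi$ are identities on generators: $\psi(\pi(v))=\rho(\mathbb{1}_{\Omega_v})=v$ and $\psi(\pi(e))=\rho(\mathbb{1}_{\Omega_e})u_e=ee^*e=e$, while $\pi(\psi(\mathbb{1}_{\Omega_\omega}\delta_\alpha))=\pi(\rho(\mathbb{1}_{\Omega_\omega}))\,\pi(u_\alpha)=\mathbb{1}_{\Omega_\omega}\delta_\alpha$, using $\pi(u_\alpha)=\mathbb{1}_{\Omega_\alpha}\delta_\alpha$ from the first step. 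Hence $\pi$ is the desired isomorphism. I expect the main obstacle to be step (iii) --- showing the $u_\alpha$ form a partial representation --- closely followed by the well-definedness of $\rho$ in step (ii); both are precisely where the tameness relation does real work and where one must argue carefully with the combinatorics of admissible paths and of the compact-open sets $\Omega_\omega$. An alternative route worth noting is to first handle a finite separated graph and then pass to the general case via continuity of the functor $\mathcal{O}$, the identifications $\Omega(E,C)=\varprojlim\Omega(F,D)$ and $\mathbb{F}=\varinjlim\mathbb{F}(F^1)$ over finite complete subgraphs $(F,D)$, and compatibility of full crossed products with these (co)limits; but the finite case still requires the same covariant pair, so the combinatorial core does not disappear.
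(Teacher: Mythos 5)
This theorem is quoted from \cite[Theorem 2.10]{Lolk} and the paper itself contains no proof of it, so there is no in-paper argument to compare yours against; I can only measure your proposal against the proofs in the cited sources. Your outline is the standard one and is essentially correct: a $*$-homomorphism $\pi$ out of $\mathcal{O}(E,C)$ from its universal property, an inverse out of $C_0(\Omega(E,C))\rtimes\mathbb{F}$ built from a nondegenerate covariant pair $(\rho,u)$, and a check on generators. The two verifications you defer --- well-definedness of $\rho$ and the partial-representation identities for the $u_\alpha$ --- are indeed where all the work lies; in \cite{AE} (for the finite bipartite case, on which \cite{Lolk} builds) this is organised somewhat differently, via Exel's machinery of universal $C^*$-algebras defined by partial representations subject to relations, so that $\Omega(E,C)$ is \emph{produced} as the spectrum of the relations rather than matched against a hand-built covariant pair; your route is more direct but must re-prove by hand that every Boolean relation among the compact-open sets $\Omega(E,C)_\omega$ is a consequence of (SCK1), (SCK2) and (T), which is exactly the spectrum computation in disguise. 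One concrete slip to repair: since $\Omega(E,C)_\alpha=\Omega(E,C)_{\{\alpha^{-1}\}}$ (the ``confusing identity'' of Definition~\ref{def:Animal}) and $\Omega(E,C)_\omega=\bigcap_{\alpha\in\omega\setminus\{1\}}\Omega(E,C)_{\alpha^{-1}}$, your formula $\rho(1_{\Omega_\omega})=\prod_{\alpha\in\omega\setminus\{1\}}u_\alpha u_\alpha^*$ has the inverses on the wrong side: applied to $\Omega(E,C)_e=\Omega(E,C)_{\{e^{-1}\}}$ it yields $u_{e^{-1}}u_{e^{-1}}^*=e^*e=s(e)$ rather than $ee^*$, contradicting your own later computation of $\psi(\pi(e))$. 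It should read $\rho(1_{\Omega_\omega})=\prod_{\alpha\in\omega\setminus\{1\}}u_{\alpha^{-1}}u_{\alpha^{-1}}^*=\prod_{\alpha\in\omega\setminus\{1\}}u_\alpha^*u_\alpha$. With that convention fixed, and granting the deferred combinatorial verifications (all of which are true and correctly located by you), the proposal is a faithful sketch of the known proof.
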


As a consequence of this theorem, there is also a natural \textit{reduced} tame $C^*$-algebra.

\begin{definition}[{\cite[Definition 6.8]{AE2}}]
\textit{The reduced tame graph $C^*$-algebra} of a finitely separated graph $(E,C)$ is the reduced crossed product $\mathcal{O}^r(E,C) := C_0(\Omega(E,C)) \rtimes_{r} \mathbb{F}$.
%We will often use the notation $\mathcal{O}^{(r)}(E,C)$ when making a claim about either of the algebras.
\exend
\end{definition}

Just as for non-separated graphs, there are certain sets of vertices that naturally correspond to ideals.

\begin{definition}[{\cite[Definition 6.3 and Definition 6.5]{AG2}}]
Let $(E,C)$ denote a finitely separated graph. Given any subset of vertices $H \subset E^0$, the \textit{full subgraph} $(E_H,C^H)$ of $H$ is given by $E_H^0:=H$, 
$$E_H^1:=\{e \in E^1 \mid r(e),s(e) \in H\}$$
with restricted range and source maps, and separation $C^H:=\{X \cap E_H^1 \mid X \cap E_H^1 \ne \emptyset\}$. The set $H$ is called \textit{hereditary} if $r(e) \in H$ implies $s(e) \in H$ for any $e \in E^1$, and it is called $C$\textit{-saturated} if $s(X) \subset H$ for $X \in C_v$ implies $v \in H$. The set of hereditary and $C$-saturated subsets $H \subset E^0$ is denoted $\mathcal{H}(E,C)$, and for all $H \in \mathcal{H}(E,C)$, we also have a \textit{quotient graph} $(E/H,C/H)$; this is simply the full subgraph of $E^0 \setminus H$. \exend
\end{definition}

Modding out the ideal generated by a hereditary and $C$-saturated subset, one obtains the tame graph $C^*$-algebra of the corresponding quotient graph.

\begin{theorem}[{\cite[Theorem 5.5]{AL}, \cite[Theorem 2.19]{Lolk}}]\label{thm:Quotient}
Let $(E,C)$ denote a finitely separated graph and consider any $H \in \mathcal{H}(E,C)$. The ideal $I(H)$ in $\mathcal{O}(E,C)$ generated by $H$ is the ideal induced from the open and invariant subspace $U(H):=\theta_\mathbb{F} \big(\bigsqcup_{v \in H} \Omega(E,C)_v \big)$, and there is a canonical isomorphism $\mathcal{O}(E,C)/I(H) \cong \mathcal{O}(E/H,C/H)$. \exend
\end{theorem}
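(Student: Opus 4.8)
The plan is to move everything to the dynamical side and invoke standard facts about partial crossed products (see \cite{Exel}). Write $\Omega := \Omega(E,C)$, and recall that under the isomorphism $\mathcal{O}(E,C) \cong C_0(\Omega) \rtimes \mathbb{F}$ each vertex projection $v \in E^0$ corresponds to the characteristic function $1_{\Omega_v}$, so $I(H)$ corresponds to the ideal of $C_0(\Omega) \rtimes \mathbb{F}$ generated by $\{1_{\Omega_v} : v \in H\}$.

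\textbf{Step 1: $I(H)$ is the ideal induced from $U(H)$.} First I would observe that $U(H) = \theta_\mathbb{F}(W)$, with $W := \bigsqcup_{v \in H}\Omega_v$, is open and $\theta$-invariant (each $\theta_\alpha$ is a homeomorphism between open sets), and is moreover the \emph{smallest} open invariant set containing $W$, since any open invariant $U \supseteq W$ must contain every $\theta_\alpha(W \cap \Omega_{\alpha^{-1}})$. The induced ideal $C_0(U(H)) \rtimes \mathbb{F}$ contains each $1_{\Omega_v}$ ($v \in H$), hence contains $I(H)$; conversely, for every $\alpha \in \mathbb{F}$ and $v \in H$ the characteristic function of the compact-open set $\theta_\alpha(\Omega_v \cap \Omega_{\alpha^{-1}})$ is obtained from $1_{\Omega_v}$ by conjugating with the canonical partial isometry implementing $\alpha$, hence lies in $I(H)$; since such sets cover $U(H)$ and $\Omega$ is zero-dimensional, $C_0(U(H))$ — and therefore $C_0(U(H)) \rtimes \mathbb{F}$ — is contained in $I(H)$. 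Thus $I(H) = C_0(U(H)) \rtimes \mathbb{F}$ is exactly the ideal induced from $U(H)$.

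\textbf{Step 2: the quotient.} By the standard description of quotients of partial crossed products, $\mathcal{O}(E,C)/I(H) \cong C_0(\Omega \setminus U(H)) \rtimes \mathbb{F}$ for the restricted partial action, so it suffices to identify $\Omega \setminus U(H)$, viewed inside $\{0,1\}^{\mathbb{F}(E^1)} \sqcup E_{\textup{iso}}^0$ via the inclusion $\mathbb{F}((E/H)^1) \subseteq \mathbb{F}(E^1)$, with $\Omega(E/H, C/H)$ as a partial $\mathbb{F}$-space. Right-convexity (b) ensures that every initial subpath of an element of a configuration $\xi$ again lies in $\xi$, and condition (c) attaches to each $\alpha \in \xi$ a vertex $w_\alpha \in E^0$, namely the range of $\alpha$ read as a path; one then checks that $\xi \notin U(H)$ precisely when $w_\alpha \notin H$ for all $\alpha \in \xi$, and that in that case walking along any $\gamma \in \xi$ exhibits both endpoints of every edge of $\gamma$ outside $H$, so $\xi \subseteq \mathbb{F}((E/H)^1)$. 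It remains to see that for $w = w_\alpha \notin H$, condition (c) for $(E,C)$ at $\alpha$ is literally condition (c) for $(E/H, C/H)$ at $\alpha$: hereditarity of $H$ forces $s^{-1}(w) = s_{E/H}^{-1}(w)$, the chosen representatives $e_X \in X$ must satisfy $s(e_X) \notin H$ (otherwise $e_X^{-1}\alpha \in \xi$ would put $\xi$ in $U(H)$), and $C$-saturation of $H$ guarantees that each intersection $X \cap (E/H)^1$ is nonempty, so that $(C/H)_w = \{X \cap (E/H)^1 : X \in C_w\}$ is in bijection with $C_w$. Hence the two configuration conditions coincide, giving $\Omega \setminus U(H) = \Omega(E/H, C/H)$ as sets; and since no $\xi \notin U(H)$ involves an edge outside $E/H$, the domains $\Omega_\alpha \cap (\Omega \setminus U(H))$ are empty whenever $\alpha \notin \mathbb{F}((E/H)^1)$, so the restricted $\mathbb{F}(E^1)$-action is just $\theta^{(E/H,C/H)}$ extended by empty domains. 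Therefore $C_0(\Omega \setminus U(H)) \rtimes \mathbb{F}(E^1) = C_0(\Omega(E/H,C/H)) \rtimes \mathbb{F}((E/H)^1) = \mathcal{O}(E/H,C/H)$, which completes the argument. (Isolated vertices cause no trouble: an isolated $v \in H$ contributes the clopen orbit $\{v\} \subseteq U(H)$, while for $v \notin H$ hereditarity and $C$-saturation show that $v$ is isolated in $E$ if and only if it is isolated in $E/H$.)

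I expect Step 2 — and within it, the verification that condition (c) transfers intact to the quotient graph in \emph{both} directions — to be the main obstacle: this is the only place where the hypotheses that $H$ be hereditary and $C$-saturated are actually needed (hereditarity to retain all out-edges of a surviving vertex, $C$-saturation to retain at least one edge in each colour class), whereas everything else is a formal consequence of the crossed-product picture of $\mathcal{O}(E,C)$ and the ideal theory of partial crossed products.
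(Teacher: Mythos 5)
The paper does not actually prove this theorem --- it is imported verbatim from the cited references (Ara--Lolk and Lolk) with no internal argument --- so there is nothing to compare against except the expected route, which is exactly the one you take: identify $I(H)$ with the induced ideal of the invariant open set $U(H)$ and then identify $\Omega(E,C)\setminus U(H)$ with $\Omega(E/H,C/H)$ as a partial $\mathbb{F}$-space. Your reconstruction is correct, and in particular you correctly locate the only non-formal content, namely that hereditarity gives $s^{-1}(w)=s^{-1}_{E/H}(w)$ and $C$-saturation gives the bijection $C_w\to (C/H)_w$ for surviving vertices $w$, so that condition (c) transfers in both directions.
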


Recall that in the non-separated setting, the ideal $I(H)$ corresponding to a hereditary and saturated set canonically contains $C^*(E_H)$ as a full corner. This fails in the separated setting for two reasons:

\begin{enumerate}
\item[(1)] The canonical map 
$$p \colon \bigsqcup_{v \in H} \Omega(E,C)_v \to \Omega(E_H,C^H)$$
of \cite[Remark 2.8]{Lolk} is usually not injective.
\item[(2)] There are usually admissible paths $\alpha$ with $r(\alpha),s(\alpha) \in H$, which are not entirely contained in $(E_H,C^H)$.
\end{enumerate}

In section~\ref{sect:BF}, we will consider certain hereditary and $C$-saturated subsets $H$, where situation (2) does not occur. We will then show that nuclearity of $\mathcal{O}(E,C)$ can be inferred from nuclearity of $\mathcal{O}(E_H,C^H)$ and $\mathcal{O}(E/H,C/H)$ in spite of (1).

\section{Topologically amenable partial actions}
In this section, we define the notion of \textit{topological amenability} for partial actions of discrete groups on locally compact Hausdorff spaces. Using the machinery of groupoids, we then check that our definition is equivalent to nuclearity of the crossed product $C^*$-algebras. The author would like to thank Claire Anantharaman-Delaroche for suggesting a groupoid approach to this problem.
\medskip \\
We first present the necessary definitions from groupoid theory. The reader is referred to \cite{Renault3} for a comprehensive treatment of topological groupoids and their $C^*$-algebras.
\begin{definition}
A groupoid $\mathcal{G}$ is a set with a distinguished subset $\mathcal{G}^{(0)} \subset \mathcal{G}$, \textit{range} and \textit{source} maps $r,s \colon \mathcal{G} \to \mathcal{G}^{(0)}$ and a partial composition
$$\{(\alpha,\beta) \in \mathcal{G} \times \mathcal{G} \mid s(\alpha)=r(\beta)\}=\mathcal{G}^{(2)}\ni (\alpha,\beta) \mapsto \alpha \cdot \beta,$$
such that
\begin{enumerate}[(1)]
\item $r(\alpha \cdot \beta)=r(\alpha)$ and $s(\alpha \cdot \beta)=s(\beta)$ for $(\alpha,\beta) \in \mathcal{G}^{(2)}$,
\item $r(\alpha)=\alpha=s(\alpha)$ for all $\alpha \in \mathcal{G}^{(0)}$,
\item $r(\alpha) \cdot \alpha = \alpha = \alpha \cdot s(\alpha)$ for all $\alpha \in \mathcal{G}$,
\item $(\alpha \cdot \beta) \cdot \gamma = \alpha \cdot (\beta \cdot \gamma)$ for $(\alpha,\beta),(\beta,\gamma) \in \mathcal{G}^{(2)}$,
\item for all $\alpha \in \mathcal{G}$, there is $\alpha^{-1} \in \mathcal{G}$ such that 
$$\alpha \cdot \alpha^{-1} = r(\alpha) \andspace \alpha^{-1} \cdot \alpha = s(\alpha).$$
\end{enumerate}
We shall use the fiber notation $\mathcal{G}^x:=r^{-1}(x)$ for $x \in \mathcal{G}^{(0)}$. \exend
\end{definition}
Alternatively, a groupoid can be defined as a small category in which all morphisms are invertible: Letting $\mathcal{G}$ denote the collection of morphisms and identifying the objects $\mathcal{G}^{(0)}$ of the category with the collection of identity morphisms, we have natural range and source maps as above, an associative partial composition and inverses. However, for our purposes it is easier to stress all the axioms explicitly as we shall immediately impose extra structure.

\begin{definition}
A \textit{topological groupoid} $\mathcal{G}$ is a groupoid equipped with a topology such that all the operations (i.e.~range, source, composition and inversion) are continuous when $\mathcal{G}^{(0)} \subset \mathcal{G}$ is given the subspace topology. Moreover, $\mathcal{G}$ is called an {\'e}tale groupoid, if $r$ and $s$ are local homeomorphisms. \exend
\end{definition}

The reason for passing to groupoids is that we can encode a partial action into a groupoid.

\begin{example}[Transformation groupoid]
Consider a partial topological action $\theta \colon G \act \Omega$ of a discrete group $G$ on a locally compact Hausdorff space $\Omega$. To any such partial action, we can associate a groupoid $\mathcal{G}_\theta$ as follows: Set
$$ \mathcal{G}_\theta :=\{(g,x) \in G \times \Omega \mid x \in \Omega_{g^{-1}}\} \quad , \quad
\mathcal{G}_\theta^{(0)} := \{1\} \times \Omega ,$$
and define range and source maps $r,s \colon \mathcal{G}_\theta \to \mathcal{G}_\theta^{(0)}$ by
$$r(g,x):=(1,\theta_g(x)) \andspace s(g,x):=(1,x).$$
Then $\alpha=(g,x) \in \mathcal{G}_\theta$ and $\beta=(h,y) \in \mathcal{G}_\theta$ are composable if and only if $x=\theta_h(y)$, in which case we set
$$\alpha \cdot \beta =(g,x) \cdot (h,y):=(gh,y).$$
We define an inversion by $(g,x)^{-1}=(g^{-1},\theta_g(x))$, and giving $\mathcal{G}_\theta$ the subspace topology of $G \times \Omega$, it is easily checked that it becomes an {\'e}tale locally compact Hausdorff groupoid, called the \textit{transformation groupoid} of $\theta$. In the future we shall always identify $\mathcal{G}^{(0)}=\{1\} \times \Omega$ and $\Omega$. \exend
\end{example}

In the following, whenever $\mu$ is a measure supported on $\mathcal{G}^{s(\alpha)}$, $\alpha \cdot \mu$ is the measure supported on $\mathcal{G}^{r(\alpha)}$ defined by $\alpha \cdot \mu(A):=\mu(\alpha^{-1} \cdot A)$.

\begin{definition}[\text{\cite[Definition 2.6]{Renault}}]\label{def:TopAmenDefGr}
Let $\mathcal{G}$ denote a locally compact Hausdorff groupoid. A \textit{topological approximate invariant mean} on $\mathcal{G}$ is a net $(m_i)_{i \in I}$, where each $m_i$ is a family $(m_i^x)_{x \in \mathcal{G}^{(0)}}$, $m_i^x$ being a probability measure on $\mathcal{G}^x=r^{-1}(x)$, such that
\begin{enumerate}
\item for all $i \in I$ and $f \in C_c(\mathcal{G})$, the map $\mathcal{G}^{(0)} \ni x \mapsto \int_{\mathcal{G}^x} f~dm_i^x$ is continuous,
\item $\sup_{\alpha \in \mathcal{K}} \Vert \alpha \cdot m_i^{s(\alpha)} - m_i^{r(\alpha)} \Vert \to 0$ for all compact subsets $\mathcal{K} \subset \mathcal{G}$,
\end{enumerate}
where the norm expression of (2) denotes the total variation of the measures. The groupoid $\mathcal{G}$ is said to be \textit{topologically amenable} if it admits a topological approximate invariant mean.
\end{definition}

\begin{remark}\label{rem:BODef}
Topological amenability has a number of different definitions in the literature, one being the existence of a \textit{topological invariant density} (see \cite[Definition 2.7]{Renault}). This is also the approach in \cite[Section 5.6]{BO}, but by \cite[Proposition 2.2.13]{DR} these two definitions are equivalent in the presence of a \textit{continuous Haar system}, which $\mathcal{G}_\theta$ always possesses \cite[Proposition 2.2]{Abadie2}. \exend
\end{remark}

We now specialise to partial actions of a discrete group $G$ on a locally compact Hausdorff space. To this end, define
$$\text{Prob}(G):=\{\mu \in \ell^1(G) \mid \mu \ge 0 \text{ and } \Vert \mu \Vert_1=1\}.$$
Then $G$ acts on $\text{Prob}(G)$ by $g.\mu(h):=\mu(hg)$.
\begin{definition}\label{def:TopAmenDef}
Consider a partial action $\theta \colon G \act \Omega$ of a discrete group $G$ on a locally compact Hausdorff space $\Omega$. A \textit{topological approximate invariant mean} for $\theta$ is a net $(m_i)_{i \in I}$, where each $m_i$ is a family $(m_i^x)_{x \in \Omega}$ with $m_i^x \in \text{Prob}(G)$ supported on $G^x=\{g \in G \mid x \in \Omega_{g^{-1}}\}$, such that
\begin{enumerate}
\item for all $i \in I$ and $g \in G$, the map $\Omega_{g^{-1}} \ni x \mapsto m_i^x(g)$ is continuous,
\item $\sup_{x \in K} \Vert g.m_i^x-m_i^{\theta_g(x)} \Vert_1 \to 0$ for any $g \in G$ and all compact subsets $K \subset \Omega_{g^{-1}}$,
\end{enumerate}
where the norm expression in (2) is the usual norm on $\ell^1(G)$. The partial action is said to be \textit{topologically amenable} if it admits a topological approximate invariant mean. \exend
\end{definition}

Note that for global actions, the above definition of topological amenability is equivalent to the classical one, see for instance \cite[Definition 4.3.5]{BO}. In order to verify that it is the appropriate generalisation to partial actions, we simply check that it is indeed a special case of Definition~\ref{def:TopAmenDefGr}.

\begin{proposition}\label{prop:AmenIffAmen}
A topological partial action $\theta \colon G \act \Omega$ of a discrete group on a locally compact Hausdorff space is topologically amenable in the sense of Definition~\ref{def:TopAmenDef} if and only if the transformation groupoid $\mathcal{G}_\theta$ is topologically amenable in the sense of Definition~\ref{def:TopAmenDefGr}.
\end{proposition}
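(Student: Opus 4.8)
The plan is to establish a dictionary between the two notions of topological approximate invariant mean and check that the defining conditions match up on the nose. Given a topological approximate invariant mean $(m_i)_{i \in I}$ for $\theta$ in the sense of Definition~\ref{def:TopAmenDef}, I would, for each $i$ and each $x \in \Omega = \mathcal{G}_\theta^{(0)}$, push the probability measure $m_i^x \in \text{Prob}(G)$ supported on $G^x$ forward to a probability measure $\widetilde{m}_i^x$ on the fiber $\mathcal{G}_\theta^x = r^{-1}(x) = \{(g, \theta_{g^{-1}}(x)) \mid g \in G^{-1}\text{-side}\}$; concretely, since $r(g,y) = (1,\theta_g(y))$, the fiber over $x$ is $\{(g, \theta_{g^{-1}}(x)) \mid g \in G,\ x \in \Omega_g\}$, and one sets $\widetilde{m}_i^x(\{(g,\theta_{g^{-1}}(x))\}) := m_i^{x}(g^{-1})$ — here one must be a little careful to get the direction of the group element right so that the support condition $\text{supp}(m_i^x) \subset G^x$ matches $\text{supp}(\widetilde{m}_i^x) \subset \mathcal{G}_\theta^x$. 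Conversely, any family of probability measures $(\widetilde{m}_i^x)$ on the fibers $\mathcal{G}_\theta^x$ restricts, under the identification of $\mathcal{G}_\theta^x$ with a subset of $G$ via the first coordinate, to a family in $\text{Prob}(G)$, and these two assignments are mutually inverse.

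Next I would verify that condition (1) of Definition~\ref{def:TopAmenDefGr} translates into condition (1) of Definition~\ref{def:TopAmenDef} and back. Since $\mathcal{G}_\theta \subset G \times \Omega$ is {\'e}tale and $G$ is discrete, $C_c(\mathcal{G}_\theta)$ is spanned by functions of the form $(h,y) \mapsto \varphi(y)$ supported on $\{h\} \times \Omega_{h^{-1}}$ with $\varphi \in C_c(\Omega_{h^{-1}})$; for such an $f$, the integral $\int_{\mathcal{G}_\theta^x} f \, d\widetilde{m}_i^x$ is, up to the bookkeeping of the previous paragraph, $\varphi(\theta_{h^{-1}}(x)) \cdot m_i^x(h^{-1})$ (nonzero only when $x \in \Omega_h$), and its continuity in $x$ is exactly continuity of $x \mapsto m_i^x(g)$ on $\Omega_{g^{-1}}$ (with $g = h^{-1}$) combined with continuity of $\theta$; conversely, taking $\varphi$ to approximate indicator functions recovers condition (1) of Definition~\ref{def:TopAmenDef}. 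I would then do the analogous translation for condition (2): for $\alpha = (g,x) \in \mathcal{G}_\theta$ one has $s(\alpha) = (1,x)$, $r(\alpha) = (1,\theta_g(x))$, and unwinding the definition of $\alpha \cdot \mu$ one checks that $\Vert \alpha \cdot \widetilde{m}_i^{s(\alpha)} - \widetilde{m}_i^{r(\alpha)} \Vert$ equals $\Vert g.m_i^x - m_i^{\theta_g(x)} \Vert_1$ (possibly after a relabeling $g \leftrightarrow g^{-1}$), so that uniform smallness over a compact $\mathcal{K} \subset \mathcal{G}_\theta$ is equivalent to uniform smallness over $x$ in the compact slices $K = \{y : (g,y) \in \mathcal{K}\} \subset \Omega_{g^{-1}}$ for each of the finitely many group elements $g$ appearing in $\mathcal{K}$ — finiteness here using that $\mathcal{K}$, being compact in $G \times \Omega$ with $G$ discrete, meets only finitely many slices.

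The genuinely non-trivial points, rather than being a single big obstacle, are the two directional bookkeeping issues: first, fixing conventions so that the left $G$-action on $\text{Prob}(G)$ given by $g.\mu(h) = \mu(hg)$ corresponds correctly to left translation in the groupoid fibers $\alpha \cdot \mu$, which forces the choice of whether $\widetilde{m}_i^x$ uses $m_i^x(g)$ or $m_i^x(g^{-1})$; and second, matching the support conditions $G^x$ versus $\mathcal{G}_\theta^x$ under this choice — note $G^x = \{g \mid x \in \Omega_{g^{-1}}\}$ while the fiber $\mathcal{G}_\theta^x$ naturally involves $\{g \mid x \in \Omega_g\}$, so an inversion is unavoidable and must be tracked consistently through both conditions (1) and (2). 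Once the conventions are pinned down these are routine, and I expect the whole proof to be short. A final remark worth including: by Remark~\ref{rem:BODef}, $\mathcal{G}_\theta$ always carries a continuous Haar system, so Definition~\ref{def:TopAmenDefGr} is unambiguous and agrees with the invariant-density formulation; this justifies working with whichever description is most convenient, and in particular legitimizes comparing against the classical definition for global actions as asserted just before the proposition.
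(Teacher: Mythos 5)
Your proposal is correct and follows essentially the same route as the paper: the same fiberwise dictionary $\widetilde{m}_i^x(\{(g^{-1},\theta_g(x))\}) = m_i^x(g)$ with the unavoidable $g \leftrightarrow g^{-1}$ bookkeeping, verification of condition (1) by reducing to functions supported on single slices $\{h\}\times\Omega_{h^{-1}}$ (where the paper instead builds explicit neighbourhoods to control which slices contribute, your slice decomposition of $C_c(\mathcal{G}_\theta)$ accomplishes the same thing, relying as you do on compact support of $\varphi$ so that the extension by zero off $\Omega_h$ stays continuous), and verification of condition (2) via the identification of the two norms and the fact that a compact subset of $\mathcal{G}_\theta$ meets only finitely many slices, each in a compact set. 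No gaps; the details you defer are exactly the routine ones the paper writes out.
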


\begin{proof}
Given a topological approximate invariant mean $(m_i)_{i \in I}$ on $\mathcal{G}_\theta$, define $\mu_i^x \in \text{Prob}(G)$ by $\mu_i^x(g):=m_i^x(g^{-1},\theta_g(x))$ for all $g \in G^x$, and set $\mu_i^x(g):=0$ for $g \notin G^x$. By assumption, $m_i^x$ is supported on the discrete set
$$\mathcal{G}_\theta^x=r^{-1}(x)=\{(g^{-1},\theta_g(x)) \in G \times \Omega \mid g \in G^x\},$$
which is mapped bijectively to $G^x$ under the map $(g^{-1},\theta_g(x)) \mapsto g$, so $\mu_i^x$ is well-defined. To check (1) for some given $i \in I$ and $g \in G$, fix $x_0 \in \Omega_{g^{-1}}$.  Then we may take a neighbourhood $x_0 \in U \subset \Omega_{g^{-1}}$ and a compactly supported continuous function $f \in C_c(\{g^{-1}\} \times \Omega_g) \subset C_c(\mathcal{G}_\theta)$ satisfying $0 \notin f(\{g^{-1}\} \times \theta_g(U))$. Now by assumption, the function
$$x \mapsto \int_{\mathcal{G}_\theta^{x}} f dm_i^x = \sum_{h \in G^x} f(h^{-1},\theta_h(x)) \cdot m_i^x(h^{-1},\theta_h(x))=f(g^{-1},\theta_g(x)) \cdot \mu_i^x(g)$$
is continuous, hence so is $U \ni x \mapsto \mu_i^x(g)$. For (2), observe first that
$$g.\mu_i^x(h)=\mu_i^x(hg)=m_i^x(g^{-1}h^{-1},\theta_{hg}(x))=(g,x) \cdot m_i^x\big(h^{-1},\theta_h(\theta_g(x)) \big)$$
for all $g \in G$, $x \in \Omega_{g^{-1}}$ and $h \in G^{\theta_g(x)}=G^x \cdot g^{-1}$. Given any compact set $K \subset \Omega_{g^{-1}}$, we therefore have
\begin{align*}
\sup_{x \in K} \big\Vert g.\mu_i^x - \mu_i^{\theta_g(x)} \big\Vert_1 &= 2 \cdot \sup_{x \in K} \big\Vert g.\mu_i^x - \mu_i^{\theta_g(x)} \big\Vert = 2 \cdot \sup_{\gamma \in \{g\} \times K}\big\Vert \gamma \cdot m_i^{s(\gamma)} - m_i^{r(\gamma)} \big\Vert \to 0
\end{align*}
as desired. We conclude that $(\mu_i)_{i \in I}$ is indeed a topological approximate invariant mean for $\theta$. The reverse implication is almost identical: Given a topological approximate invariant mean $(\mu_i)_{i \in I}$ for $\theta$, we set 
$$m_i^x(g,y):= \left\{\begin{array}{cl}
\mu_i^x(g^{-1}) & \If x=\theta_g(y) \\
0 & \text{otherwise}
\end{array} \right. $$
and observe that $m_i^x$ is indeed a probability measure supported on $\mathcal{G}_\theta^x$. Let $f \in C_c(\mathcal{G}_\theta)$ and observe that $F:=\big\{g \in G \mid f(\{g^{-1}\} \times \Omega_g) \ne \{0\} \big\}$ is a finite subset; we then have
$$\int_{\mathcal{G}_\theta^x} f dm_i^x = \sum_{g \in F \cap G^x} f(g^{-1},\theta_g(x))\mu_i^x(g)$$
for all $x \in \Omega=\mathcal{G}_\theta^{(0)}$. While every summand $x \mapsto  f(g^{-1},\theta_g(x))\mu_i^x(g)$ is continuous on $\Omega_{g^{-1}}$, the set $F \cap G^x$ need not vary continuously. To get around this, fix some $x_0 \in \Omega$ and set $F_{x_0}:=F \cap G^{x_0}$; we may of course assume that $F_{x_0} \subsetneq F$. From $f$ being compactly supported, there exist open neighbourhoods $U_g$ of $x_0$ for all $g \in F \setminus F_{x_0}$ such that either 
\begin{itemize}
\item $U_g \cap \Omega_{g^{-1}} = \emptyset$, or
\item $U_g \cap \Omega_{g^{-1}} \ne \emptyset$ and $f\big(\{g^{-1}\} \times\theta_g(U_g \cap \Omega_{g^{-1}}) \big) = \{0\}$.
\end{itemize}
Defining an open neighbourhood of $x_0$ by $U:=\bigcap_{g \in F \setminus F_{x_0}} U_g$, we then have
$$\int_{\mathcal{G}_\theta^x} f dm_i^x = \sum_{g \in F_{x_0}}  f(g^{-1},\theta_g(x))\mu_i^x(g)$$
for all $x \in U$. Since $x \mapsto \mu_i^x(g^{-1})$ is continuous in $x_0$ for every $g \in F_{x_0}$, we conclude that $x \mapsto \int_{\mathcal{G}_\theta^x} f dm_i^x$ is indeed continuous. (2) follows just as above by noting that every compact set $\mathcal{K} \subset \mathcal{G}_\theta$ is of the form $\bigsqcup_{i=1}^n \{g_i\} \times K_i$ for group elements $g_1,\ldots,g_n$ and compact subsets $K_i \subset \Omega_{g_i^{-1}}$.
\end{proof}

The real reason for our digression to groupoids is that one can associate a \textit{full} and a \textit{reduced} groupoid $C^*$-algebra $C^*(\mathcal{G})$ and $C^*_r(\mathcal{G})$ to any {\'e}tale locally compact Hausdorff groupoid, and in the case $\mathcal{G}=\mathcal{G}_\theta$, there are canonical isomorphisms
$$C^*(\mathcal{G}_\theta) \cong C_0(\Omega) \rtimes_\theta G \andspace C^*_r(\mathcal{G}_\theta) \cong C_0(\Omega) \rtimes_{\theta,r} G,$$
proven in \cite[Theorem 3.3 (2)]{Abadie2} and \cite[Proposition 2.2]{Li}. We thus obtain the following for free.
\begin{theorem}\label{thm:TopAmenThm}
Consider a partial action $\theta \colon G \act \Omega$ of a discrete group on a locally compact Hausdorff space. Then $C_0(\Omega) \rtimes_r G$ is nuclear if and only if $\theta$ is topologically amenable in the sense of Definition~\ref{def:TopAmenDef}, and in that case, the regular representation 
$$C_0(\Omega) \rtimes G \to C_0(\Omega) \rtimes_r G$$
is an isomorphism. In particular, $C_0(\Omega) \rtimes G$ is nuclear if and only if $C_0(\Omega) \rtimes_r G$ is nuclear.
\end{theorem}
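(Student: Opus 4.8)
The plan is to carry the entire statement across the identification of the crossed products with the $C^*$-algebras of the transformation groupoid $\mathcal{G}_\theta$, so that it becomes a direct instance of the known dichotomy for \'etale groupoids. First I would record the two canonical isomorphisms already quoted above: since $\mathcal{G}_\theta$ is an \'etale, locally compact Hausdorff groupoid equipped with the canonical continuous Haar system of \cite[Proposition 2.2]{Abadie2}, we have $C^*(\mathcal{G}_\theta) \cong C_0(\Omega) \rtimes G$ and $C^*_r(\mathcal{G}_\theta) \cong C_0(\Omega) \rtimes_r G$ by \cite[Theorem 3.3 (2)]{Abadie2} and \cite[Proposition 2.2]{Li}, and these isomorphisms intertwine the two regular representations. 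Thus every claim about the crossed products of $\theta$ becomes an equivalent claim about $C^*(\mathcal{G}_\theta)$ and $C^*_r(\mathcal{G}_\theta)$. Next, Proposition~\ref{prop:AmenIffAmen} lets me replace ``$\theta$ is topologically amenable in the sense of Definition~\ref{def:TopAmenDef}'' by ``$\mathcal{G}_\theta$ is topologically amenable in the sense of Definition~\ref{def:TopAmenDefGr}'', and Remark~\ref{rem:BODef} (via \cite[Proposition 2.2.13]{DR}) identifies the latter with the other standard formulations of groupoid amenability used in the sources, in particular the one in \cite[Section 5.6]{BO}.

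The substantive input is then the fundamental equivalence for \'etale locally compact Hausdorff groupoids with continuous Haar system: $\mathcal{G}_\theta$ is topologically amenable if and only if $C^*_r(\mathcal{G}_\theta)$ is nuclear, and in the amenable case the regular representation $C^*(\mathcal{G}_\theta) \to C^*_r(\mathcal{G}_\theta)$ is an isomorphism. This is not proved here but quoted from the groupoid literature (\cite[Section 5.6]{BO}, \cite{Renault}, and the references therein), which is exactly why the text announces the theorem ``for free''. Combining this equivalence with the two reductions of the previous paragraph yields the first two assertions of the theorem verbatim.

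It remains to deduce the ``in particular'' clause. In one direction this is soft: $C_0(\Omega) \rtimes_r G$ is a quotient of $C_0(\Omega) \rtimes G$ under the regular representation, and quotients of nuclear $C^*$-algebras are nuclear, so nuclearity of the full crossed product forces nuclearity of the reduced one. In the other direction, if $C_0(\Omega) \rtimes_r G$ is nuclear, then by the equivalence just invoked $\theta$ is topologically amenable, hence the regular representation is an isomorphism and $C_0(\Omega) \rtimes G \cong C_0(\Omega) \rtimes_r G$ is nuclear.

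I do not expect a genuine obstacle internal to this argument, since its weight rests on the cited groupoid theorem; the only point demanding care is bookkeeping, namely checking that $\mathcal{G}_\theta$ really falls within the hypotheses of the theorems one cites (\'etaleness and the continuous Haar system are noted above; any standing separability or second-countability assumptions in the chosen references must either be verified in the case at hand or the references chosen to avoid them) and that the competing notions of amenability appearing across \cite{BO}, \cite{Renault} and \cite{DR} genuinely agree for $\mathcal{G}_\theta$, which is precisely what Remark~\ref{rem:BODef} is set up to guarantee.
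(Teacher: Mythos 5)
Your proposal is correct and follows essentially the same route as the paper: translate everything to the transformation groupoid via Proposition~\ref{prop:AmenIffAmen} and Remark~\ref{rem:BODef}, invoke the standard equivalence for \'etale groupoids (the paper cites \cite[Theorem 5.6.18 and Corollary 5.6.17]{BO}), and obtain the remaining direction of the ``in particular'' clause from the fact that nuclearity passes to quotients.
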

\begin{proof}
In view of Remark~\ref{rem:BODef} and Proposition~\ref{prop:AmenIffAmen}, the first part follows immediately from \cite[Theorem 5.6.18 and Corollary 5.6.17]{BO}. Finally, if the full crossed product is nuclear, then so is the reduced, since nuclearity passes to quotients.
\end{proof}
In the setting of global actions, topological amenability can always be pulled back by continuous equivariant maps. In the partial setting, however, one has to be a little more careful.

\begin{definition}[{\cite[Definition 2.2]{ES}}]
Suppose that $G$ acts partially on $\Omega$ and $\Omega'$ and that $f \colon \Omega \to \Omega'$ is equivariant, so that $G^x \subset G^{f(x)}$ for all $x \in \Omega$. Then $f$ is called \textit{d-bijective} if $G^{f(x)}=G^x$ for all $x \in \Omega$.
\end{definition}
\begin{proposition}[{\cite[Proposition 2.4]{ES}}]\label{prop:DomainBij}
Assume that $\theta \colon G \act \Omega$ and $\theta' \colon G \act \Omega'$ are partial actions of a discrete group on locally compact Hausdorff spaces, and that $f \colon \Omega \to \Omega'$ is a continuous, equivariant and d-bijective map. If $\theta'$ is topologically amenable, then so is $\theta$.
\end{proposition}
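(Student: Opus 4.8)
The plan is to transport a topological approximate invariant mean from $\Omega'$ back to $\Omega$ along $f$, working directly with Definition~\ref{def:TopAmenDef} rather than passing through the groupoid picture. Let $(n_i)_{i \in I}$ be a topological approximate invariant mean for $\theta'$, so that each $n_i$ is a family $(n_i^{y})_{y \in \Omega'}$ with $n_i^{y} \in \text{Prob}(G)$ supported on $G^{y}$. For $x \in \Omega$ I would simply set $m_i^{x} := n_i^{f(x)}$ and claim that $(m_i)_{i \in I}$ is a topological approximate invariant mean for $\theta$.

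First I would check that each $m_i^x$ is a legitimate candidate, i.e.\ that $m_i^{x} \in \text{Prob}(G)$ is supported on $G^x$. Being a probability measure on $G$ is immediate since $n_i^{f(x)}$ is one. For the support condition, $n_i^{f(x)}$ is supported on $G^{f(x)}$, and equivariance only guarantees $G^x \subset G^{f(x)}$, so a priori the support could be strictly larger than $G^x$. This is \emph{precisely} where d-bijectivity enters: the hypothesis $G^{f(x)} = G^x$ forces $m_i^x$ to be supported on $G^x$. (Without d-bijectivity the construction already fails at this point, which is the reason for imposing the hypothesis.)

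It then remains to verify conditions (1) and (2) of Definition~\ref{def:TopAmenDef}. For (1), fix $i \in I$ and $g \in G$; by equivariance $f(\Omega_{g^{-1}}) \subset \Omega'_{g^{-1}}$, so $\Omega_{g^{-1}} \ni x \mapsto m_i^{x}(g) = n_i^{f(x)}(g)$ is the composition of the continuous map $f \colon \Omega_{g^{-1}} \to \Omega'_{g^{-1}}$ with the continuous map $\Omega'_{g^{-1}} \ni y \mapsto n_i^{y}(g)$, hence continuous. For (2), fix $g \in G$ and a compact set $K \subset \Omega_{g^{-1}}$. Equivariance gives $f(\theta_g(x)) = \theta_g'(f(x))$, so $m_i^{\theta_g(x)} = n_i^{\theta_g'(f(x))}$ and therefore $\Vert g.m_i^{x} - m_i^{\theta_g(x)} \Vert_1 = \Vert g.n_i^{f(x)} - n_i^{\theta_g'(f(x))} \Vert_1$ for every $x \in K$. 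Since $f(K)$ is a compact subset of $\Omega'_{g^{-1}}$, we obtain
$$\sup_{x \in K} \Vert g.m_i^{x} - m_i^{\theta_g(x)} \Vert_1 \le \sup_{y \in f(K)} \Vert g.n_i^{y} - n_i^{\theta_g'(y)} \Vert_1 \to 0,$$
because $(n_i)_{i \in I}$ is a topological approximate invariant mean for $\theta'$. Hence $(m_i)_{i \in I}$ witnesses topological amenability of $\theta$.

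There is no genuinely hard step: the only point requiring care — and the entire role of the d-bijectivity hypothesis — is the support condition in the second paragraph; the rest is routine continuity-and-compactness bookkeeping, exactly parallel to the global-action case. Alternatively, one could deduce the statement from Proposition~\ref{prop:AmenIffAmen} by observing that a continuous equivariant d-bijective $f$ induces a continuous groupoid homomorphism $\mathcal{G}_\theta \to \mathcal{G}_{\theta'}$ restricting to a bijection $\mathcal{G}_\theta^{x} \to \mathcal{G}_{\theta'}^{f(x)}$ on each fiber, and invoking functoriality of topological amenability for groupoids, but the direct argument above seems shorter.
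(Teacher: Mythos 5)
Your proposal is correct and is essentially identical to the paper's own proof: both pull the approximate invariant mean back along $f$ via $m_i^x := n_i^{f(x)}$, use d-bijectivity for the support condition, continuity of $f$ for condition (1), and compactness of $f(K)$ together with equivariance for condition (2). Your write-up is simply a more detailed version of the same argument.
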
 
\begin{proof}
Let $(m_i)_{i \in I}$ denote a topological approximate invariant mean for $\theta'$ and define $\mu_i^x:=m_i^{f(x)}$ for all $x \in \Omega$ and $i \in I$. First observe that each $\mu_i^x$ is a probability measure on $G^x$ since $f$ is d-bijective, and that $\Omega_{g^{-1}} \ni x \mapsto \mu_i^x(g)$ is continuous, being the composition of $f$ and $\Omega_{g^{-1}}' \ni y \mapsto m_i^y(g)$. Finally, if $K \subset \Omega_{g^{-1}}$ is compact, then
$$\sup_{x \in K} \Vert g.\mu_i^x - \mu_i^{\theta_g(x)} \Vert_1 = \sup_{x \in K} \Vert g.m_i^{f(x)} - m_i^{f(\theta_g(x))} \Vert_1 =  \sup_{y \in f(K)} \Vert g.m_i^{y} - m_i^{\theta_g'(y)} \Vert_1 \to 0,$$
hence $\theta$ is indeed topologically amenable.
\end{proof}

One particular simple situation giving rise to a topological approximate invariant mean is the existence of a \textit{topological F{\o}lner net} for the action.
\begin{definition}\label{def:FolnerDef}
Let $\theta \colon G \act \Omega$ denote a partial action of a discrete group on a locally compact Hausdorff space, and denote by $\mathcal{F}(G)$ the set of non-empty finite subsets of $G$ endowed with the discrete topology. A \textit{topological F{\o}lner net for} $\theta$ is a net $(F_i)_{i \in I}$ of continuous (i.e.~locally constant) functions $F_i \colon \Omega \to \mathcal{F}(G)$, $x \mapsto F_i^x$, such that $F_i^x \subset G^x$ for all $x \in \Omega$, and for every $g \in G$ and all compact subsets $K \subset \Omega_{g^{-1}}$,
$$\sup_{x \in K} \frac{\vert F_i^x \cdot g^{-1} \setminus F_i^{\theta_g(x)} \vert}{\vert F_i^x \vert} \to 0.$$
\end{definition}\exend\medskip

Here is the observation that justifies Definition~\ref{def:FolnerDef}.
\begin{proposition}\label{prop:FolnerImpliesTopAmen}
Let $\theta \colon G \act \Omega$ denote a partial action of a discrete group on a locally compact Hausdorff space. If $\theta$ has a topological F{\o}lner net, then it is topologically amenable.
\end{proposition}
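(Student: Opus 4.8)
The plan is to promote a topological F{\o}lner net to a topological approximate invariant mean by passing to normalised indicator measures. Given a topological F{\o}lner net $(F_i)_{i \in I}$ for $\theta$, I would set, for each $i \in I$ and $x \in \Omega$,
$$m_i^x := \frac{1}{|F_i^x|} \sum_{g \in F_i^x} \delta_g \in \mathrm{Prob}(G),$$
the uniform probability measure on the finite set $F_i^x$. Since $F_i^x \subset G^x$, each $m_i^x$ is supported on $G^x$; and since $F_i \colon \Omega \to \mathcal{F}(G)$ is locally constant, the map $x \mapsto m_i^x(g) = |F_i^x|^{-1} \mathbf{1}_{F_i^x}(g)$ is locally constant on $\Omega$, in particular continuous on $\Omega_{g^{-1}}$. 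This disposes of condition (1) of Definition~\ref{def:TopAmenDef}.

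The content lies in condition (2). The first observation is that, by the formula $g.\mu(h) = \mu(hg)$, the measure $g.m_i^x$ is exactly the uniform probability measure on $F_i^x \cdot g^{-1}$ (a set of cardinality $|F_i^x|$), while $m_i^{\theta_g(x)}$ is the uniform probability measure on $F_i^{\theta_g(x)}$. A routine computation of the total variation between two uniform probability measures $u_A, u_B$ on finite sets gives
$$\| u_A - u_B \|_1 = 2\Big( 1 - \frac{|A \cap B|}{\max(|A|,|B|)} \Big) \le 2 \cdot \frac{|A \setminus B|}{|A|} + 2 \cdot \frac{|B \setminus A|}{|B|},$$
so, with $A = F_i^x \cdot g^{-1}$ and $B = F_i^{\theta_g(x)}$,
$$\big\| g.m_i^x - m_i^{\theta_g(x)} \big\|_1 \le 2 \cdot \frac{|F_i^x \cdot g^{-1} \setminus F_i^{\theta_g(x)}|}{|F_i^x|} + 2 \cdot \frac{|F_i^{\theta_g(x)} \setminus F_i^x \cdot g^{-1}|}{|F_i^{\theta_g(x)}|}.$$
Fixing a compact $K \subset \Omega_{g^{-1}}$ and taking $\sup_{x \in K}$, the first term tends to $0$ directly from the F{\o}lner condition (Definition~\ref{def:FolnerDef}) applied to the pair $(g, K)$.

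For the second term I would use the symmetry of the situation. Right multiplication by $g$ is a bijection of $F_i^{\theta_g(x)} \setminus F_i^x \cdot g^{-1}$ onto $F_i^{\theta_g(x)} \cdot g \setminus F_i^x$, and $|F_i^{\theta_g(x)} \cdot g| = |F_i^{\theta_g(x)}|$, so the second term equals $|F_i^{\theta_g(x)} \cdot g \setminus F_i^x| / |F_i^{\theta_g(x)} \cdot g|$. Since $\theta_g \colon \Omega_{g^{-1}} \to \Omega_g$ is a homeomorphism, $\theta_g(K)$ is a compact subset of $\Omega_g = \Omega_{(g^{-1})^{-1}}$, and $\theta_{g^{-1}} \circ \theta_g = \mathrm{id}_{\Omega_{g^{-1}}}$; applying the F{\o}lner condition to the pair $(g^{-1}, \theta_g(K))$ and substituting $x' = \theta_g(x)$ then yields $\sup_{x \in K} |F_i^{\theta_g(x)} \cdot g \setminus F_i^x| / |F_i^{\theta_g(x)}| \to 0$. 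Combining the two estimates gives $\sup_{x \in K} \| g.m_i^x - m_i^{\theta_g(x)} \|_1 \to 0$, so $(m_i)_{i \in I}$ is a topological approximate invariant mean, and $\theta$ is topologically amenable.

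The only genuinely delicate point is the asymmetry: the F{\o}lner condition as stated controls only the forward defect $|F_i^x \cdot g^{-1} \setminus F_i^{\theta_g(x)}|$, whereas the $\ell^1$-distance also registers the backward defect $|F_i^{\theta_g(x)} \setminus F_i^x \cdot g^{-1}|$, and, because $|F_i^x|$ and $|F_i^{\theta_g(x)}|$ need not agree, the shared support of the two uniform measures contributes as well. Both issues are handled by the total-variation identity above together with a second application of the F{\o}lner condition at the inverse group element and the translated compact set; the remaining verifications are entirely routine.
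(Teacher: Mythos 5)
Your proof is correct and follows essentially the same route as the paper's: the same uniform probability measures $m_i^x = |F_i^x|^{-1}1_{F_i^x}$, the same bound $\|g.m_i^x - m_i^{\theta_g(x)}\|_1 \le 2|F_i^x\cdot g^{-1}\setminus F_i^{\theta_g(x)}|/|F_i^x| + 2|F_i^{\theta_g(x)}\cdot g\setminus F_i^x|/|F_i^{\theta_g(x)}|$ (the paper derives it by expanding the total variation into the three terms your identity packages), and the same second application of the F{\o}lner condition at $(g^{-1},\theta_g(K))$ to handle the backward defect.
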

\begin{proof}
Assume that $(F_i)_{i \in I}$ is a topological F{\o}lner net for $\theta$, and define 
$$m_i \colon \Omega \to \text{Prob}(G) \quad \text{by} \quad m_i^x := \frac{1}{\vert F_i^x \vert} \cdot 1_{F_i^x},$$
where $1_F$ is the characteristic function on a set $F$. Then each $m_i^x$ is certainly a probability measure with support $F_i^x \subset G^x$, satisfying (1) of Definition~\ref{def:TopAmenDef}. In order to check (2), let $g \in G$ and compact $K \subset \Omega_{g^{-1}}$ be given. We then have
\begin{align*}
\Vert g.m_i^x - m_i^{\theta_g(x)} \Vert_1 &= \frac{\vert F_i^x \cdot g^{-1} \setminus F_i^{\theta_g(x)} \vert}{\vert F_i^x \vert} + \frac{\vert F_i^{\theta_g(x)} \setminus F_i^x \cdot g^{-1} \vert}{\vert F_i^{\theta_g(x)} \vert} \\
&{} \quad + \vert F_i^x \cdot g^{-1} \cap F_i^{\theta_g(x)} \vert \cdot \big\vert \frac{1}{\vert F_i^x \cdot g^{-1} \vert}-\frac{1}{\vert F_i^{\theta_g(x)}\vert}  \big\vert \\
&= \frac{\vert F_i^x \cdot g^{-1} \setminus F_i^{\theta_g(x)} \vert}{\vert F_i^x \vert} + \frac{\vert F_i^{\theta_g(x)} \cdot g \setminus F_i^{\theta_{g^{-1}}(\theta_g(x))} \vert}{\vert F_i^{\theta_g(x)} \vert} \\
&{} \quad + \vert F_i^x \cdot g^{-1} \cap F_i^{\theta_g(x)} \vert \cdot \frac{\big \vert \vert F_i^{\theta_g(x)} \vert - \vert F_i^x \cdot g^{-1} \vert \big\vert}{\vert F_i^x \vert \cdot \vert F_i^{\theta_g(x)} \vert} \\
&\le 2\cdot \frac{\vert F_i^x \cdot g^{-1} \setminus F_i^{\theta_g(x)} \vert}{\vert F_i^x \vert} + 2 \cdot \frac{\vert F_i^{\theta_g(x)} \cdot g \setminus F_i^{\theta_{g^{-1}}(\theta_g(x))} \vert}{\vert F_i^{\theta_g(x)} \vert}
\end{align*}
for all $x \in K$. Setting $K' := \theta_g(K) \subset \Omega_g$, we deduce that
$$\sup_{x \in K}\Vert g.m_i^x - m_i^{\theta_g(x)} \Vert_1 \le 2 \cdot \sup_{x \in K}\frac{\vert F_i^x \cdot g^{-1} \setminus F_i^{\theta_g(x)} \vert}{\vert F_i^x \vert} + 2 \cdot \sup_{y \in K'} \frac{\vert F_i^y \cdot g \setminus F_i^{\theta_{g^{-1}}(y)} \vert}{\vert F_i^y \vert} \to 0.$$
\end{proof}

Given a partial action $\theta \colon G \act \Omega$ with certain properties, one might desire a global action with the same properties. In terms of purely dynamical properties, this can always be accomplished by considering the \textit{minimal globalisation} $\widetilde{\theta} \colon G \act \widetilde{\Omega}$ of \cite[Theorem 2.5]{Abadie}. While the space $\widetilde{\Omega}$ resembles $\Omega$ locally, it might have very different global properties. The Hausdorff property might not even pass from $\Omega$ to $\widetilde{\Omega}$ \cite[Example 2.9]{Abadie}, but if the domains are clopen, such pathological examples do not exist \cite[Proposition 2.10]{Abadie}. However, compactness will \textit{usually} not be preserved, and so it is natural to ask if there exist ``good'' compactifications of $\widetilde{\theta}$. One particular case of interest is that of a topologically amenable partial action $\theta \colon G \act \Omega$ on a compact Hausdorff space with clopen domains. By the above, the minimal globalisation is topologically amenable and acts on a locally compact Hausdorff space. A good compactification of $\widetilde{\theta}$ in this context would be a topologically amenable one, and so the one-point compactification is not desirable if $G$ is non-amenable. Below, we will provide a good compactification in this setup for \textit{right-convex} partial actions of a free group.

\begin{definition}
A partial action $\mathbb{F} \act \Omega$ of a free group is called \textit{convex} if
$$\mathbb{F}^x=\{\alpha \in \mathbb{F} \mid x \in \Omega_{\alpha^{-1}}\}$$
is a right-convex subset of $\mathbb{F}$ for all $x \in \Omega$. \exend
\end{definition}

We will need the following technical lemma.

\begin{lemma}\label{lem:CompHausd}
Consider a continuous, surjective map of Hausdorff spaces $p \colon \Omega \to \Upsilon$, and assume that for any $y \in \Upsilon$, there exists an open neighbourhood $U$ of $y$ for which $\overline{p^{-1}(U)}$ is compact. If $C$ is any compactification of $\Upsilon$, then $\Omega \sqcup \partial \Upsilon$ is a Hausdorff compactification of $\Omega$ when equipped with the smallest topology making $\Omega \hookrightarrow \Omega \sqcup \partial \Upsilon$ open and $p \sqcup \id \colon \Omega \sqcup \partial \Upsilon \to C$ continuous.
\end{lemma}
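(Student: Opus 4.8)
Let me denote $\partial\Upsilon := C \setminus \Upsilon$ (the "boundary" in $C$) and write $\Omega^+ := \Omega \sqcup \partial\Upsilon$ with the topology described: the smallest one making the inclusion $\iota \colon \Omega \hookrightarrow \Omega^+$ open and the map $q := p \sqcup \id \colon \Omega^+ \to C$ continuous. Concretely, a basis for this topology is given by the open subsets of $\Omega$ together with sets of the form $q^{-1}(W) \cup (U \cap \Omega)$ where $W$ is open in $C$ and $U$ is open in $\Omega$; or more simply, one checks that the sets $q^{-1}(W)$ for $W \subset C$ open, unioned with arbitrary opens of $\Omega$, generate the topology. The three things to verify are: (i) $\Omega^+$ is compact; (ii) $\Omega^+$ is Hausdorff; (iii) $\Omega$ is open and dense in $\Omega^+$, so that $\Omega^+$ is genuinely a compactification.

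First I would establish \emph{compactness}. Take an open cover $\mathcal{V}$ of $\Omega^+$. Each point of $\partial\Upsilon$ lies in some basic open set of the form $q^{-1}(W)$ with $W \ni q(\text{that point})$ open in $C$; since $\partial\Upsilon$ is closed in the compact space $C$, finitely many such $W_1, \dots, W_n$ cover $\partial\Upsilon$, and we may assume each $W_j$ is taken small enough that $q^{-1}(W_j)$ is contained in a member of $\mathcal{V}$. Then $K := C \setminus \bigcup_j W_j$ is a compact subset of $C$ contained in $\Upsilon$. The key point is that $p^{-1}(K) \subset \Omega$ has compact closure: by hypothesis every $y \in \Upsilon$ has an open neighborhood $U_y$ with $\overline{p^{-1}(U_y)}$ compact, finitely many $U_{y_1}, \dots, U_{y_m}$ cover $K$, and $\overline{p^{-1}(K)} \subset \bigcup_i \overline{p^{-1}(U_{y_i})}$ is then compact. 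Now $\Omega^+ \setminus \bigcup_j q^{-1}(W_j) = p^{-1}(K)$, so this set has compact closure $L$ inside $\Omega \subset \Omega^+$; cover $L$ by finitely many members of $\mathcal{V}$ and adjoin the finitely many members containing the $q^{-1}(W_j)$. This produces a finite subcover.

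Next, \emph{Hausdorffness}. Two points of $\Omega$ are separated since $\Omega$ is Hausdorff and open in $\Omega^+$. For $x \in \Omega$ and $\eta \in \partial\Upsilon$: if $p(x) \ne \eta$ in $C$, use disjoint open $C$-neighborhoods $W_1 \ni p(x)$, $W_2 \ni \eta$ and take $q^{-1}(W_1)$, $q^{-1}(W_2)$; if $p(x) = \eta$ — which cannot happen since $p(x) \in \Upsilon$ while $\eta \in \partial\Upsilon = C \setminus \Upsilon$ — there is nothing to do, so this case is vacuous. For two distinct points $\eta_1, \eta_2 \in \partial\Upsilon$: separate them in $C$ by disjoint opens $W_1, W_2$ and pull back by $q$. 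Thus $\Omega^+$ is Hausdorff. Finally, $\Omega$ is open in $\Omega^+$ by construction, and it is dense: $\partial\Upsilon \subset \overline{\Omega}$ because any basic neighborhood $q^{-1}(W)$ of a point $\eta \in \partial\Upsilon$ has $W \cap \Upsilon \ne \emptyset$ (as $\Upsilon$ is dense in its compactification $C$) and $p$ is surjective, so $q^{-1}(W) \cap \Omega = p^{-1}(W \cap \Upsilon) \ne \emptyset$. Hence $\Omega^+ = \Omega \sqcup \partial\Upsilon$ is a Hausdorff compactification of $\Omega$.

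The \textbf{main obstacle} is the compactness argument, and specifically the bookkeeping that shows $\Omega^+ \setminus \bigcup_j q^{-1}(W_j)$ equals $p^{-1}(K)$ and has compact closure in $\Omega$: this is exactly where the somewhat unusual hypothesis — that $p^{-1}$ of a neighborhood, rather than $p^{-1}$ of a point, has compact closure — gets used, compensating for the fact that $p$ need not be proper. Care is also needed to confirm that the "smallest topology" described really has the basis I claimed (so that "basic neighborhood of $\eta \in \partial\Upsilon$ is of the form $q^{-1}(W)$" is legitimate); this is a routine subbasis computation but should be spelled out, since the interaction between "$\iota$ open" and "$q$ continuous" is what pins down the neighborhood filter at boundary points.
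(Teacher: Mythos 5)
Your proof is correct, and it reaches the same two destinations as the paper (Hausdorffness via pushing points down to $C$, compactness via the compact-closure hypothesis on $p^{-1}(U)$), but the compactness step is organised differently: the paper argues with nets, taking an arbitrary net in $\Omega\sqcup\partial\Upsilon$, projecting it to $C$, extracting a convergent subnet and splitting on whether the limit lies in $\Upsilon$ or in $\partial\Upsilon$, whereas you argue with open covers, first covering $\partial\Upsilon$ by finitely many sets $q^{-1}(W_j)$ and then observing that the complement $q^{-1}(K)=p^{-1}(K)$ has compact closure in $\Omega$. Your version has the advantage of making explicit where each hypothesis enters and of cleanly avoiding the issue (present but glossed over in the paper's net argument) of a net oscillating between $\Omega$ and $\partial\Upsilon$; you also verify density of $\Omega$, which the paper leaves implicit. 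The one step you should justify is the assertion that $\partial\Upsilon$ is closed (hence compact) in $C$: this is not automatic for an arbitrary compactification, but it does follow here, since surjectivity and continuity of $p$ together with the compact-closure hypothesis force $\Upsilon$ to be locally compact, and a locally compact dense subspace of a Hausdorff space is open. With that remark added, and with the routine subbasis computation you already flag (showing that neighbourhoods of boundary points are exactly the sets $q^{-1}(W)$), the argument is complete.
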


\begin{proof}
We first show that $\Omega \sqcup \partial \Upsilon$ is Hausdorff, so consider any pair of distinct points $x_1,x_2 \in \Omega \sqcup \partial \Upsilon$. Since $\Omega$ is open in $\Omega \sqcup \partial \Upsilon$, we may assume that at least one of the points belongs to $\partial \Upsilon$. Then $p \sqcup \id(x_1) \ne p \sqcup \id(x_2)$, so they can be separated by $\Upsilon \sqcup \partial \Upsilon$ being Hausdorff and continuity of $p \sqcup \id$. In order to demonstrate compactness, take any net $(x_i)_{i \in I}$ in $\Omega \sqcup \partial \Upsilon$ and consider the net $(y_i)_{i \in I}$ with $y_i=p \sqcup \id (x_i)$. By compactness of $C$, it has a convergent subnet $(y_i)_{i \in J}$ and we denote the limit point by $y$. If $y \in \Upsilon$, then there exist an open neighbourhood $U$ of $y$ for which $\overline{p^{-1}(U)}$ is compact, and since $\overline{p^{-1}(U)}$ contains a subnet of $(x_i)_{i \in I}$, it has a convergent subnet. Assume instead that $y \in \partial \Upsilon$. Then, since every open neighbourhood of $y$ inside $\Omega \sqcup \partial \Upsilon$ is of the form $(p \sqcup \id)^{-1}(U)$ for an open neighbourhood $y \in U \subset C$, we see that $(x_i)_{i \in J}$ converges towards $y$.
\end{proof}

\begin{theorem}\label{thm:Comp}
Suppose that $\theta \colon \mathbb{F} \act \Omega$ is a convex partial action of a free group of rank at least two on a compact Hausdorff space with clopen domains. Then there exists a Hausdorff compactification $\widehat{\Omega}$ of $\widetilde{\Omega}$ and an extension $\widehat{\theta} \colon \mathbb{F} \act \widehat{\Omega}$ of $\widetilde{\theta}$, such that the restriction $\widehat{\theta} \colon \mathbb{F} \act \widehat{\Omega} \setminus \widetilde{\Omega}$ is conjugate to the canonical boundary action $\mathbb{F} \act \partial \mathbb{F}$. In particular, if $\theta$ is topologically amenable, then so is $\widehat{\theta}$.
\end{theorem}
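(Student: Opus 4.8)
The plan is to build $\widehat\Omega$ as the set $\widetilde\Omega\sqcup\partial\mathbb F$, endowed with a topology produced by Lemma~\ref{lem:CompHausd} from a suitable ``collapsing'' map of $\widetilde\Omega$ into the Cayley tree of $\mathbb F$. Recall that $\widetilde\Omega=\mathbb F\cdot\Omega=\bigcup_{\gamma\in\mathbb F}\gamma\cdot\Omega$ with $\Omega$ open in $\widetilde\Omega$; since the domains are clopen, $\widetilde\Omega$ is Hausdorff by \cite[Proposition 2.10]{Abadie}, and as $\Omega$ is compact it is even clopen in $\widetilde\Omega$, so every $\gamma\cdot\Omega$ is clopen. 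Fix a free basis of $\mathbb F$, let $T$ be the corresponding Cayley graph --- a tree on which $\mathbb F$ acts by left translation as isometries --- and identify $\mathbb F$ with the vertex set of $T$. For $z\in\widetilde\Omega$ put $S_z:=\{\gamma\in\mathbb F\mid z\in\gamma\cdot\Omega\}$; writing $z=[\gamma,x]$ with $x\in\Omega$ one gets $S_z=\gamma\cdot(\mathbb F^x)^{-1}$, and convexity of $\theta$ amounts to the statement that $S_z$ is a non-empty subtree of $T$ for every $z$. This $S_z$ is independent of the representative $(\gamma,x)$ and satisfies $S_{\gamma\cdot z}=\gamma\cdot S_z$.

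Define $p\colon\widetilde\Omega\to\mathbb F$ by letting $p(z)$ be the nearest-point projection in $T$ of the base vertex onto $S_z$; concretely, $p([\gamma,x])=\gamma\cdot\alpha^{-1}$ where $\alpha$ is the longest suffix of the reduced word $\gamma$ lying in $\mathbb F^x$. One checks that $p^{-1}(\gamma)$ is the intersection of the clopen set $\gamma\cdot\Omega$ with the complements of the finitely many clopen sets $\delta\cdot\Omega$, $\delta$ ranging over the vertices strictly between the base vertex and $\gamma$ in $T$; in particular $p^{-1}(\gamma)$ is clopen with $\overline{p^{-1}(\gamma)}\subseteq\gamma\cdot\Omega$ compact, so $p$ is continuous for the discrete topology on $\mathbb F$ and satisfies the hypothesis of Lemma~\ref{lem:CompHausd} with $\Upsilon:=p(\widetilde\Omega)$ discretely topologised. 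If $\theta$ is not global there is a letter $t$ (a basis element or its inverse) with $t\notin\mathbb F^x$ for some $x\in\Omega$, and then $\Upsilon$ contains the identity and every reduced word with rightmost letter $t$; a short argument --- prolonging a prefix of an arbitrary end of $T$ by two suitable letters so as to end in $t$ --- shows that $\Upsilon$ is dense at infinity inside the end compactification $\overline{\mathbb F}=\mathbb F\sqcup\partial\mathbb F$, i.e.\ $\overline\Upsilon=\Upsilon\sqcup\partial\mathbb F$. Lemma~\ref{lem:CompHausd} then yields the compact Hausdorff space $\widehat\Omega:=\widetilde\Omega\sqcup\partial\mathbb F$ with $\widetilde\Omega$ open. (If $\theta$ is already global, simply take $\widehat\Omega:=\Omega\sqcup\partial\mathbb F$.)

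Next, extend $\widetilde\theta$ to $\widehat\theta$ by letting $\widehat\theta_\gamma$ act as $\widetilde\theta_\gamma$ on $\widetilde\Omega$ and as $\omega\mapsto\gamma\cdot\omega$ on $\partial\mathbb F$. This is plainly an action of $\mathbb F$ whose restrictions to the two pieces are $\widetilde\theta$ and the canonical boundary action $\mathbb F\act\partial\mathbb F$, and the one real point to verify --- which I expect to be the main obstacle --- is that each $\widehat\theta_\gamma$ is continuous, hence a homeomorphism (being the inverse of $\widehat\theta_{\gamma^{-1}}$). The difficulty is that $p$ is not $\mathbb F$-equivariant (in general there is no equivariant, continuous, proper map $\widetilde\Omega\to\mathbb F$) but only coarsely so: since nearest-point projection onto a subtree of $T$ is $1$-Lipschitz, $d_T\big(p(\gamma\cdot z),\gamma\cdot p(z)\big)\le|\gamma|$ for all $z$ and $\gamma$. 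This bounded error is invisible in $\partial\mathbb F$: given $\omega\in\partial\mathbb F$ and a neighbourhood $V$ of $\gamma\cdot\omega$, choose a ``shadow'' neighbourhood $W$ of $\omega$ deep enough that $\gamma\cdot W$ lies in the shadow of $\gamma\cdot\omega$ of depth exceeding $|\gamma|$ and still inside $V$; then $p(z)\in W$ forces $p(\widehat\theta_\gamma z)\in V$, and together with openness of $\widetilde\Omega$ this shows $\widehat\theta_\gamma^{-1}$ carries basic open subsets of $\widehat\Omega$ to open sets.

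For the final assertion, suppose $\theta$ is topologically amenable. Since $\mathbb F\cdot\Omega=\widetilde\Omega$, the projection $1_\Omega$ is full in the multiplier algebra of $C_0(\widetilde\Omega)\rtimes_r\mathbb F$ and compresses it to $C_0(\Omega)\rtimes_r\mathbb F$; these reduced crossed products are therefore Morita equivalent (cf.\ \cite{Abadie}), so by Theorem~\ref{thm:TopAmenThm} the algebra $C_0(\widetilde\Omega)\rtimes_r\mathbb F$ is nuclear. The canonical boundary action $\mathbb F\act\partial\mathbb F$ is topologically amenable (classical), so $C(\partial\mathbb F)\rtimes\mathbb F$ is nuclear. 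As $\widetilde\Omega\subseteq\widehat\Omega$ is open and invariant, $C_0(\widetilde\Omega)\rtimes_r\mathbb F$ is an ideal in $C_0(\widehat\Omega)\rtimes_r\mathbb F$ whose quotient is a quotient of the nuclear algebra $C(\partial\mathbb F)\rtimes\mathbb F$, hence nuclear; since nuclearity is preserved under extensions, $C_0(\widehat\Omega)\rtimes_r\mathbb F$ is nuclear, and Theorem~\ref{thm:TopAmenThm} once more shows that $\widehat\theta$ is topologically amenable.
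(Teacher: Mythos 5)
Your proof is correct and follows essentially the same route as the paper: your nearest-point projection $p$ is exactly the paper's minimal-word map $\sigma$, the topology on $\widehat{\Omega}$ is obtained from Lemma~\ref{lem:CompHausd} in both cases, and your coarse equivariance bound $d_T\big(p(\gamma\cdot z),\gamma\cdot p(z)\big)\le|\gamma|$ plays the role of the paper's observation that $\sigma(\widetilde{\theta}_\beta([\alpha,x]))=\beta\cdot\sigma([\alpha,x])$ whenever $\beta$ does not cancel into $\sigma([\alpha,x])$. You are in fact slightly more careful than the paper on two points --- verifying that the image of $p$ accumulates on all of $\partial\mathbb{F}$ so that Lemma~\ref{lem:CompHausd} applies with the intended boundary, and justifying nuclearity of the ideal $C_0(\widetilde{\Omega})\rtimes\mathbb{F}$ via Morita equivalence with $C_0(\Omega)\rtimes\mathbb{F}$ --- but the underlying argument is the same.
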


\begin{proof}
Recall that
$$\widetilde{\Omega}=\frac{\mathbb{F} \times \Omega}{\sim}, \text{\quad where $(\alpha,x) \sim (\beta,y) \Leftrightarrow x \in \Omega_{\alpha^{-1}\beta}$ and $y=\theta_{\beta^{-1}\alpha}(x)$}.$$
The action is simply induced from the group, $\widetilde{\theta}_\beta([\alpha,x])=[\beta\alpha,x]$, and $\Omega$ embeds as a clopen subspace by the map $\iota \colon \Omega \to \widetilde{\Omega}$, $\iota(x)=[1,x]$. Now since $\theta$ is assumed convex, to each pair $(\alpha,x) \in \mathbb{F} \times \Omega$ there is unique minimal word $\sigma(\alpha,x) \in \mathbb{F}$ such that $\sigma(\alpha,x)^{-1} \cdot \alpha \in \mathbb{F}^x$, and it is straightforward to check that $\sigma$ respects the relation $\sim$. We claim that $\sigma$ is also continuous, so consider any pair $(\alpha,x)$. Assuming first that $\sigma(\alpha,x)=1$, we note that $\{\alpha\} \times \Omega_{\alpha^{-1}}$ is an open neighbourhood of $(\alpha,x)$ on which $\sigma$ attains the value $1$. If $\sigma(\alpha,x) \ne 1$, we write $\alpha_x$ for the maximal subword (read from the right) of $\alpha$ satisfying $\alpha_x \in \mathbb{F}^x$ and denote the following letter by $s$. Then $\sigma$ is constant on the open neighbourhood 
$$\{\alpha\} \times \big( \Omega_{\alpha_x^{-1}} \setminus \Omega_{(s\alpha_x)^{-1}} \big),$$
where we invoke the assumption of clopen domains, hence it is indeed continuous. In conclusion, $\sigma$ drops to a continuous map $\sigma \colon \widetilde{\Omega} \to \mathbb{F}$. Also observe that if $\beta \in \mathbb{F}$ does not contain $\sigma([\alpha,x])^{-1}$ as a subword (read from the right), then
$$\sigma(\widetilde{\theta}_\beta ([\alpha,x]))=\sigma([\beta \cdot \alpha,x])=\beta \cdot \sigma([\alpha,x]).$$
Now define $\widehat{\Omega}:=\widetilde{\Omega} \cup \partial \mathbb{F}$ as a set and equip it with the smallest topology that makes the inclusion $\widetilde{\Omega} \hookrightarrow \widehat{\Omega}$ open and the map $\sigma \cup \id \colon \widehat{\Omega} \to \mathbb{F} \cup \partial \mathbb{F}$ continuous. By Lemma~\ref{lem:CompHausd}, this makes $\widehat{\Omega}$ into a compact Hausdorff space, and we extend the action by the ordinary action of $\mathbb{F}$ on its boundary. It follows immediately from the above observation that the action on $\widetilde{\Omega}$ is compatible with that on the boundary, and so we obtain a short exact sequence
$$0 \rightarrow C_0(\widetilde{\Omega}) \rtimes \mathbb{F} \rightarrow C(\widehat{\Omega}) \rtimes \mathbb{F} \rightarrow C(\partial \mathbb{F}) \rtimes \mathbb{F} \rightarrow 0.$$
Since both the ideal and the quotient are nuclear, we conclude that the extension is nuclear as well, hence $\widehat{\theta}$ is topologically amenable.
\end{proof}

\section{Condition (N) and proper orientability of the branching subgraph}
In this section, we introduce Condition (N) for finitely separated graphs and prove that it is equivalent to both exactness and nuclearity of the $C^*$-algebra associated to a certain subgraph. But first, we introduce quite a bit of terminology.
\begin{definition}\label{def:LocOr}
A non-trivial admissible path $\alpha$ in a separated graph $(E,C)$ is said to \textit{allow a return} if there is an admissible path $\beta$ making $\beta\alpha$ a closed path, and a set $X \in C_v$ then allows a return if $e^{-1}$ allows a return for some $e \in X$. A vertex $v \in E^0$ is called a \textit{branching vertex} if
$$\vert \{e \in s^{-1}(v) \mid e \text{ allows a return}\} \vert + \vert \{X \in C_v \mid X \text{ allows a return}\} \vert \ge 3,$$
and a branching vertex $v$ is said to \textit{admit a local orientation} if one of the following holds:
\begin{enumerate}
\item There exists $X_v \in C_v$ such that for every base-simple closed path $\alpha$ at $v$, either $\ini_d(\alpha) \in X_v^{-1}$ or $\ter_d(\alpha) \in X_v$.
\item There is an edge $e_v \in s^{-1}(v)$ such that for every base-simple closed path $\alpha$ at $v$, either $\ini_d(\alpha)=e_v$ or $\ter_d(\alpha)=e_v^{-1}$.
\end{enumerate}
Observe that if $v$ is a branching vertex satisfying (1), then it does not satisfy (2) and $X_v$ is unique. Likewise, if $v$ is a branching vertex satisfying (2), then it does not satisfy (1) and $e_v$ is unique. We will therefore refer to the branching vertices admitting local orientations as either \textit{type} (1) or (2).
\end{definition}

\begin{lemma}
If $v$ is a branching vertex admitting a local orientation, then it satisfies either Definition~\ref{def:LocOr}\textup{(}1\textup{)} or Definition~\ref{def:LocOr}\textup{(}2\textup{)} for arbitrary closed paths $\alpha$ based at $v$.
\end{lemma}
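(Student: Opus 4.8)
By Definition~\ref{def:LocOr}, the branching vertex $v$ is either of type (1) or of type (2), and I would handle the two cases in parallel: reduce an arbitrary closed path based at $v$ to a concatenation of \emph{base-simple} closed paths, and then propagate the given local orientation across the junctions using the two conditions defining an admissible path.

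Suppose first that $v$ is of type (1), with distinguished $X_v\in C_v$. Given an arbitrary closed path $\alpha$ based at $v$, I would decompose it as $\alpha=\beta_k\cdots\beta_1$ with each $\beta_j$ a base-simple closed path based at $v$: let $\beta_1$ be the shortest non-trivial initial subpath of $\alpha$ with $r(\beta_1)=v$ (it exists, since $\alpha$ itself qualifies, and it is base-simple by minimality of its length), peel it off, and recurse on the remainder $\alpha'$, which satisfies $s(\alpha')=r(\beta_1)=v=r(\alpha)=r(\alpha')$ and hence is trivial or again a closed path based at $v$; the procedure terminates since lengths strictly decrease.

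Since $\ini_d(\alpha)=\ini_d(\beta_1)$ and $\ter_d(\alpha)=\ter_d(\beta_k)$, and each $\beta_j$ already satisfies Definition~\ref{def:LocOr}(1) by hypothesis, it remains only to treat the case $\ini_d(\beta_1)\notin X_v^{-1}$; then $\ter_d(\beta_1)\in X_v$, and I claim that $\ter_d(\beta_j)\in X_v$ for every $j$, by induction on $j$. For the inductive step, the two-letter word $\ini_d(\beta_{j+1})\,\ter_d(\beta_j)$ occurs as a subpath of the admissible path $\alpha$. Writing $f:=\ter_d(\beta_j)\in X_v$: if $\ini_d(\beta_{j+1})=g^{-1}$ for some $g\in X_v$, this subpath is of the form $g^{-1}f$ with $[g]=[f]=X_v$, contradicting the admissibility condition on subpaths $e^{-1}f$; hence $\ini_d(\beta_{j+1})\notin X_v^{-1}$, and since $\beta_{j+1}$ is a base-simple closed path at $v$, Definition~\ref{def:LocOr}(1) forces $\ter_d(\beta_{j+1})\in X_v$. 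Taking $j=k$ yields $\ter_d(\alpha)\in X_v$, as required.

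If $v$ is of type (2), with distinguished edge $e_v\in s^{-1}(v)$, the argument is identical once one replaces, throughout, membership in $X_v^{-1}$ by equality with $e_v$ and membership in $X_v$ by equality with $e_v^{-1}$; the obstruction at a junction now comes from the no-backtracking condition, since $\ter_d(\beta_j)=e_v^{-1}$ together with $\ini_d(\beta_{j+1})=e_v$ would make $e_v\,e_v^{-1}$ an inadmissible subpath of $\alpha$. I do not anticipate any genuine difficulty; the one point that requires care throughout is the bookkeeping imposed by the ``read from the right'' convention, namely the identities $\ini_d(\alpha)=\ini_d(\beta_1)$ and $\ter_d(\alpha)=\ter_d(\beta_k)$, and the fact that $\ter_d(\beta_j)$ sits immediately to the left of $\ini_d(\beta_{j+1})$ inside $\alpha$, so that the two-letter words above are honest subpaths of $\alpha$.
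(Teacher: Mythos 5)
Your proof is correct and follows essentially the same route as the paper's: decompose $\alpha$ into base-simple closed paths and use the admissibility constraint at each junction (condition (2) on subpaths $e^{-1}f$ for type (1), the no-backtracking condition for type (2)) to propagate the local orientation from one factor to the next. The paper phrases this as an induction on the number of base-simple factors rather than a forward induction along them, but the key step is identical; your version is merely more explicit about the existence of the decomposition and about the type (2) case, which the paper leaves to the reader.
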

\begin{proof}
We verify the claim for Definition~\ref{def:LocOr}(1); the proof in the second case is exactly the same. If $\alpha$ is an arbitrary closed path, we can decompose into base-simple closed paths $\alpha=\alpha_n \cdots \alpha_1$. Assuming that the claim holds for all products of $n-1$ base-simple closed paths, we either have $\ini_d(\alpha)=\ini_d(\alpha_{n-1}\cdots \alpha_1) \in X_v^{-1}$, in which case we are done, or $\ter_d(\alpha_{n-1}) \in X_v$. In the latter case, we must have $\ini_d(\alpha_n) \not\in X_v^{-1}$ so $\ter_d(\alpha)=\ter_d(\alpha_n) \in X_v$.
\end{proof}

\begin{remark}
If $v$ admits a local orientation, one should regard $X_v^{-1}$ or $e_v$ (depending on the type of $v$) as the proper exits of $v$, while the rest of $s^{-1}(v) \sqcup r^{-1}(v)^{-1}$ should be regarded as entries. Then any cycle based at $v$ will depart from $v$ using one and arrive at $v$ using the other, so there is a canonical orientation of the cycle, i.e.~a canonical choice between itself and its inverse. This is why we call it a local orientation. \exend
\end{remark}

We now have the language to define Condition (N).

\begin{definition}
A finitely separated graph $(E,C)$ is said to satisfy \textit{Condition} (\textit{N}) if any branching vertex $v \in E^0$ admits a local orientation. \exend
\end{definition}

It is worth noting that Condition (N) trivially passes to subgraphs.

\begin{cexample}
Below, we have marked the branching vertices in blue:
\begin{center}
\begin{tikzpicture}[scale=0.70]
 \SetUpEdge[lw         = 1.5pt,
            labelcolor = white]
  \tikzset{VertexStyle/.style = {draw,shape = circle,fill = white, inner sep=4pt,minimum size=10pt,outer sep=4pt}}

  \SetVertexNoLabel

  \Vertex[x=-6,y=0]{u1}  
  \Vertex[x=6,y=0]{u4}
  \Vertex[x=9,y=0]{u5}
  
  \Vertex[x=3,y=3]{u8}
  \Vertex[x=-6,y=3]{u6}
  \Vertex[x=6,y=3]{u9}

  \Vertex[x=-6,y=-3]{u10}
  \Vertex[x=3,y=-3]{u12}
  \Vertex[x=6,y=-3]{u13}
  
  \tikzset{VertexStyle/.style = {draw,shape = circle,fill = \niceblue, inner sep=4pt,minimum size=10pt,outer sep=4pt}}
  \Vertex[x=-3,y=0]{u2}
  \Vertex[x=0,y=0]{u3}
  \Vertex[x=-3,y=3]{u7}
  \Vertex[x=-3,y=-3]{u11}

  \tikzset{EdgeStyle/.style = {->,color={\niceblue}}}
  \Loop[dir=EA](u5)  
  \Edge[](u2)(u3)
  \Edge[](u4)(u3)
  \Edge[](u4)(u5)
  \Edge[](u6)(u1)
  \Edge[label=$2$](u9)(u4)
  
  \tikzset{EdgeStyle/.style = {->,bend left=20,color={\niceblue}}}
  \Edge[](u2)(u1)
  \Edge[](u8)(u3)
  \Edge[label=$2$](u11)(u3)
  
  \tikzset{EdgeStyle/.style = {->,bend right=20,color={\niceblue}}}
  \Edge[label=$3$](u7)(u3)
  \Edge[](u12)(u3)
  
  \tikzset{EdgeStyle/.style = {->,color={\nicegreen}}}
  \Edge[label=$2$](u13)(u4)
  \Edge[](u1)(u10)
  
  \tikzset{EdgeStyle/.style = {->,bend left=20,color={\nicegreen}}}
  \Edge[](u12)(u3)
  
  \tikzset{EdgeStyle/.style = {->,bend right=20,color={\nicegreen}}}
  \Edge[](u11)(u3)
  \Edge[label=$2$](u2)(u1)
  
  \tikzset{EdgeStyle/.style = {->,bend left=20,color={\nicered}}}
  \Edge[](u7)(u3)
  
  \tikzset{EdgeStyle/.style = {->,bend right=20,color={\nicered}}}
  \Edge[](u8)(u3)
\end{tikzpicture}
\end{center}
The reader may check that this graph actually satisfies Condition (N) as the right branching vertex is of type (1), and the left ones are of type (2). \exend
\end{cexample}

Rather than simply negating the above definition, we would also like to have a constructive understanding of what it means for a graph not to satisfy Condition (N). For clarity, we first introduce the following technical lemma.

\begin{lemma}\label{lem:CoolCombs}
Let $P$ and $S$ denote sets with the following structure: There are functions $\iota,\tau \colon P \to S$, an associative partial composition
$$\{(\alpha_2,\alpha_1) \in P^2 \mid \iota(\alpha_2) \ne \tau(\alpha_1) \} \to P \quad , \quad (\alpha_2,\alpha_1) \mapsto \alpha_2\alpha_1,$$
and a function $P \ni \alpha \mapsto \alpha^{-1} \in P$, such that
$$\iota(\alpha_2\alpha_1)=\iota(\alpha_1), \quad \tau(\alpha_2\alpha_1)=\tau(\alpha_2), \quad \iota(\alpha^{-1})=\tau(\alpha), \andspace \tau(\alpha^{-1})=\iota(\alpha).$$
Moreover, assume that for all $\alpha_1,\alpha_2,\alpha_3 \in P$,
$$\vert \iota(\{\alpha_1,\alpha_2,\alpha_3\}) \vert \le 2 \quad \text{or} \quad \vert \tau(\{\alpha_1,\alpha_2,\alpha_3\}) \vert \le 2.$$
If $\vert \iota(P) \vert = \vert \tau(P) \vert \ge 3$, then there is a unique $s \in S$ satisfying $s \in \{\iota(\alpha),\tau(\alpha)\}$ for all $\alpha \in P$.
\end{lemma}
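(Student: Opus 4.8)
The plan is as follows. Note first that the partial composition on $P$ is never needed: the conclusion refers only to $\iota$, $\tau$ and inversion. Since $\iota(\alpha^{-1})=\tau(\alpha)$ and $\tau(\alpha^{-1})=\iota(\alpha)$ for every $\alpha\in P$, inversion already gives $\iota(P)=\tau(P)$; I would set $T:=\iota(P)=\tau(P)\subseteq S$, so $|T|\ge 3$, and write $\operatorname{supp}\alpha:=\{\iota(\alpha),\tau(\alpha)\}$, a subset of $T$ with one or two elements. Uniqueness of the desired $s$ is then immediate: if $s\ne s'$ both satisfied $s,s'\in\operatorname{supp}\alpha$ for all $\alpha$, then picking $t\in T\setminus\{s,s'\}$ (possible since $|T|\ge 3$) and $\alpha\in P$ with $\iota(\alpha)=t$ would force $\{s,s'\}\subseteq\operatorname{supp}\alpha=\{t,\tau(\alpha)\}$, hence $s=\tau(\alpha)=s'$.

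For existence I would proceed in two stages. Stage one reduces everything to the claim that the supports pairwise meet, i.e.\ $\operatorname{supp}\alpha\cap\operatorname{supp}\beta\ne\emptyset$ for all $\alpha,\beta\in P$. Granting this, regard the supports as the edges of a multigraph with vertex set $T$. If some $\operatorname{supp}\alpha=\{a\}$ is a loop, then every other support meets $\{a\}$, hence contains $a$, and $s:=a$ works. Otherwise all supports have two elements; if no element of $T$ lies in every support, I would pick $\alpha_1$ with $\operatorname{supp}\alpha_1=\{a,b\}$, then $\alpha_2$ with $a\notin\operatorname{supp}\alpha_2$ (so $\operatorname{supp}\alpha_2=\{b,c\}$), then $\alpha_3$ with $b\notin\operatorname{supp}\alpha_3$ (so $\operatorname{supp}\alpha_3=\{a,c\}$), where $a,b,c$ are pairwise distinct. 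Since each edge of $P$ is available in both orientations through $\alpha\mapsto\alpha^{-1}$, I can choose representatives with $\iota(\alpha_1)=a,\tau(\alpha_1)=b$, $\iota(\alpha_2)=b,\tau(\alpha_2)=c$ and $\iota(\alpha_3)=c,\tau(\alpha_3)=a$; then $\{\alpha_1,\alpha_2,\alpha_3\}$ has $\iota$-value set $\{a,b,c\}$ and $\tau$-value set $\{a,b,c\}$, both of size $3$, contradicting the triple hypothesis. So a common $s$ exists, and it is unique by the previous paragraph.

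Stage two is the pairwise-meeting claim, and this is the one place I expect to need a little care (purely because of loops). Suppose $\operatorname{supp}\alpha\cap\operatorname{supp}\beta=\emptyset$. If $\alpha$ is not a loop, then the triple $\{\beta,\alpha,\alpha^{-1}\}$ has $\iota$-values $\{\iota(\beta)\}\cup\operatorname{supp}\alpha$ and $\tau$-values $\{\tau(\beta)\}\cup\operatorname{supp}\alpha$, each of size $3$ by disjointness, contradicting the hypothesis; the case where $\beta$ is not a loop is symmetric. So both $\alpha$ and $\beta$ are loops, at distinct vertices $a$ and $b$. I would then choose $c\in T\setminus\{a,b\}$ and $\gamma\in P$ with $c\in\operatorname{supp}\gamma$, replacing $\gamma$ by $\gamma^{-1}$ if necessary so that $\iota(\gamma)=c$. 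If $\gamma$ is a loop, then $\{\alpha,\beta,\gamma\}$ are three loops at the distinct vertices $a,b,c$, so its $\iota$-value set and its $\tau$-value set are both $\{a,b,c\}$, again a contradiction. If $\gamma$ is not a loop, then $\operatorname{supp}\gamma=\{c,\tau(\gamma)\}$ with $c\notin\{a,b\}$, so $\operatorname{supp}\gamma$ is disjoint from at least one of $\{a\}=\operatorname{supp}\alpha$ and $\{b\}=\operatorname{supp}\beta$, and we are back in the non-loop case already handled. Every branch yields a contradiction, so the pairwise-meeting claim holds and the proof goes through. The main obstacle, such as it is, is just organising this final case distinction cleanly.
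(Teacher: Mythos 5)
Your proof is correct, and it takes a genuinely different route from the paper at the one point where the two arguments could diverge. The paper first fixes a triple $\alpha_1,\alpha_2,\alpha_3$ with three distinct $\iota$-values, proves that their supports pairwise intersect, and in the hardest subcase --- two ``loops'' based at distinct points $a\ne b$ together with a third element whose support avoids one of them --- it invokes the partial composition: it conjugates, forming $\alpha_3^{-1}\alpha_1\alpha_3$, to manufacture a third loop at a third basepoint and thereby violate the triple hypothesis. You instead dispose of that subcase by picking a fresh element $\gamma$ with $\iota(\gamma)=c\notin\{a,b\}$ (available because $\vert\iota(P)\vert\ge 3$) and splitting on whether $\gamma$ is a loop, using only the triples $\{\alpha,\beta,\gamma\}$ and $\{\alpha,\gamma,\gamma^{-1}\}$; as a result your argument never touches the composition, which shows that hypothesis of the lemma is actually superfluous --- only $\iota$, $\tau$, inversion, the triple condition and $\vert\iota(P)\vert\ge 3$ are needed. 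The two proofs also package the endgame differently: the paper propagates membership of $s$ from its fixed triple to an arbitrary $\alpha$ by choosing suitable inverses and reapplying the earlier claims, whereas you prove pairwise intersection for \emph{all} pairs and then run a short Helly-type argument on pairwise-intersecting one- and two-element sets (common point or triangle, the triangle being killed by orienting its three edges cyclically). Both are sound; yours is marginally more self-contained and isolates the purely combinatorial content, while the paper's conjugation trick is natural given that in the intended application $P$ consists of closed paths, where conjugation is the obvious operation.
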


\begin{proof}
Take $\alpha_1,\alpha_2,\alpha_3 \in P$ with distinct $\iota(\alpha_i)$'s and define $\mathcal{E}(\alpha):=\{\iota(\alpha),\tau(\alpha)\}$ for all $\alpha \in P$. We first claim that $\mathcal{E}(\alpha_i) \cap \mathcal{E}(\alpha_j) \ne \emptyset$ for all $i,j$, and without loss of generality we may take $i=1$ and $j=2$. Assume in order to reach a contradiction that $\mathcal{E}(\alpha_1) \cap \mathcal{E}(\alpha_2) = \emptyset$. If $\iota(\alpha_1) \ne \tau(\alpha_1)$, then 
$$\vert \iota(\{\alpha_1,\alpha_1^{-1},\alpha_2\}) \vert = \vert \tau(\{\alpha_1,\alpha_1^{-1},\alpha_2\}) \vert = 3,$$
hence $\iota(\alpha_1) = \tau(\alpha_1)$, and similarly we must have $\iota(\alpha_2) = \tau(\alpha_2)$. We deduce that either 
$$\mathcal{E}(\alpha_1) \cap \mathcal{E}(\alpha_3) = \emptyset \quad \text{or} \quad \mathcal{E}(\alpha_2) \cap \mathcal{E}(\alpha_3) = \emptyset,$$
and without loss of generality we may assume the former. But then
$$\vert \iota(\{\alpha_1,\alpha_2,\alpha_3^{-1}\alpha_1\alpha_3\}) \vert = \vert \tau(\{\alpha_1,\alpha_2,\alpha_3^{-1}\alpha_1\alpha_3\}) \vert = 3,$$
giving us our desired contradiction. We now even claim that 
$$\mathcal{E}(\alpha_1) \cap \mathcal{E}(\alpha_2) \cap \mathcal{E}(\alpha_3) \ne \emptyset.$$
Assuming the contrary, we can arrange that
$$\tau(\alpha_1)=\iota(\alpha_2), \quad \tau(\alpha_2)=\iota(\alpha_3), \andspace \tau(\alpha_3) = \iota(\alpha_1)$$
by applying the first part and possibly interchanging the indices and taking inverses, hence
$$\vert \iota(\{\alpha_1,\alpha_2,\alpha_3\}) \vert = \vert \tau(\{\alpha_1,\alpha_2,\alpha_3\}) \vert = 3.$$
We conclude that 
$$\mathcal{E}(\alpha_1) \cap \mathcal{E}(\alpha_2) \cap \mathcal{E}(\alpha_3) = \{s\}$$
for some $s \in S$. This implies that there are distinct $s_1,s_2 \ne s$ and $i \ne j$ such that $\mathcal{E}(\alpha_i) = \{s,s_1\}$ and $\mathcal{E}(\alpha_j)=\{s,s_2\}$. Now if $\alpha \in P$ is arbitrary, then by taking suitable inverses $\beta=\alpha^\varepsilon$, $\beta_i=\alpha_i^{\varepsilon_i}$, and $\beta_j = \alpha_j^{\varepsilon_j}$ (i.e.~$\varepsilon,\varepsilon_i,\varepsilon_j \in \{-1,1\}$), we can arrange that
$$\vert \iota(\{\beta,\beta_i,\beta_j\}) \vert = 3.$$
This allows us to apply the above conclusions, hence
$$\emptyset \ne \mathcal{E}(\beta) \cap \mathcal{E}(\beta_i) \cap \mathcal{E}(\beta_j)  \subset \mathcal{E}(\alpha_i) \cap \mathcal{E}(\alpha_j) =\{s\}.$$
We deduce that $\mathcal{E}(\beta) \cap \mathcal{E}(\beta_i) \cap \mathcal{E}(\beta_j) = \{s\}$, so in particular $s \in \mathcal{E}(\beta)=\mathcal{E}(\alpha)$. Uniqueness of $s$ is clear from $\mathcal{E}(\alpha_i) \cap \mathcal{E}(\alpha_j) =\{s\}$.
\end{proof}

\begin{proposition}\label{prop:NonCondN}
A finitely separated graph $(E,C)$ does not satisfy Condition \textup{(}N\textup{)} if and only if there is a branching vertex $v \in E^0$ and cycles $\alpha=\delta\gamma$ and $\beta=\varepsilon\gamma$ based at $v$ with $\gamma=\alpha \wedge \beta<\alpha,\beta$, such that $\beta \cdot \alpha^{-1}=\varepsilon\delta^{-1}$ and $\beta\alpha$ are cycles.
\end{proposition}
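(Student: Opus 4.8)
The plan is to translate everything into the combinatorial setting of Lemma~\ref{lem:CoolCombs}. Fix a vertex $v$, let $P$ be the set of closed paths based at $v$, equipped with concatenation as partial composition and $\mu\mapsto\mu^{-1}$ as involution, and set $S:=s^{-1}(v)\sqcup C_v$. Assign to $\mu\in P$ its \emph{initial datum} $\iota(\mu)\in S$ — the edge $\ini_d(\mu)$ if this is an edge, and the class $[e]\in C_v$ if $\ini_d(\mu)=e^{-1}$ — and symmetrically its \emph{terminal datum} $\tau(\mu)$, namely $[\ter_d(\mu)]\in C_v$ if $\ter_d(\mu)$ is an edge and the edge $e$ if $\ter_d(\mu)=e^{-1}$. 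Recalling when a concatenation of admissible paths is again admissible, one checks that $\mu_2\mu_1$ is admissible precisely when $\iota(\mu_2)\ne\tau(\mu_1)$, so $(P,S,\iota,\tau)$ satisfies all the structural axioms of Lemma~\ref{lem:CoolCombs}. The involution interchanges $\iota$ and $\tau$, hence $\iota(P)=\tau(P)$, and unwinding the definition of allowing a return this common set equals $\{e\in s^{-1}(v)\mid e\text{ allows a return}\}\sqcup\{X\in C_v\mid X\text{ allows a return}\}$; thus $|\iota(P)|=|\tau(P)|\ge 3$ iff $v$ is a branching vertex. Moreover $\mu$ is a cycle iff $\iota(\mu)\ne\tau(\mu)$, and — via the lemma following Definition~\ref{def:LocOr}, which propagates a local orientation from base-simple closed paths to all closed paths — $v$ admits a local orientation iff some $s\in S$ lies in $\{\iota(\mu),\tau(\mu)\}$ for every $\mu\in P$, such an $s\in s^{-1}(v)$ being exactly an edge $e_v$ as in Definition~\ref{def:LocOr}(2) and such an $s\in C_v$ exactly a set $X_v$ as in Definition~\ref{def:LocOr}(1).

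For the implication $(\Leftarrow)$, suppose $v$ is a branching vertex carrying cycles $\alpha=\delta\gamma$ and $\beta=\varepsilon\gamma$ as in the statement. Since $\ter_d(\delta)=\ter_d(\alpha)$ and, by maximality of $\gamma=\alpha\wedge\beta$, $\ini_d(\delta)\ne\ini_d(\varepsilon)$, no further cancellation occurs in $\varepsilon\delta^{-1}$, and we read off $\iota(\beta\cdot\alpha^{-1})=\tau(\alpha)$, $\tau(\beta\cdot\alpha^{-1})=\tau(\varepsilon)=\tau(\beta)$, while plainly $\iota(\beta\alpha)=\iota(\alpha)$ and $\tau(\beta\alpha)=\tau(\beta)$. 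That $\alpha$, $\beta\cdot\alpha^{-1}$ and $\beta\alpha$ are cycles then gives $\iota(\alpha)\ne\tau(\alpha)$, $\tau(\alpha)\ne\tau(\beta)$ and $\iota(\alpha)\ne\tau(\beta)$ respectively. If some $s\in S$ lay in $\{\iota(\mu),\tau(\mu)\}$ for all $\mu\in P$, then $s\in\{\tau(\alpha),\tau(\beta)\}\cap\{\iota(\alpha),\tau(\beta)\}$, which by the first two inequalities equals $\{\tau(\beta)\}$, whence $s=\tau(\beta)\notin\{\iota(\alpha),\tau(\alpha)\}$ by the remaining ones — a contradiction. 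Hence $v$ admits no local orientation and $(E,C)$ fails Condition (N). (Note that only the cycle conditions on $\alpha$, $\beta\alpha$ and $\beta\cdot\alpha^{-1}$ are used here.)

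For the implication $(\Rightarrow)$, assume $(E,C)$ fails Condition (N) and pick a branching vertex $v$ with no local orientation. By the dictionary above, $|\iota(P)|=|\tau(P)|\ge 3$ while no $s\in S$ meets $\{\iota(\mu),\tau(\mu)\}$ for every $\mu\in P$, so Lemma~\ref{lem:CoolCombs} forces its triple hypothesis to fail: there are $\alpha_1,\alpha_2,\alpha_3\in P$ with $\iota(\alpha_1),\iota(\alpha_2),\iota(\alpha_3)$ pairwise distinct and $\tau(\alpha_1),\tau(\alpha_2),\tau(\alpha_3)$ pairwise distinct. Put $\gamma:=\alpha_1^{-1}$, $\delta:=\alpha_2$, $\varepsilon:=\alpha_3$, $\alpha:=\delta\gamma=\alpha_2\alpha_1^{-1}$ and $\beta:=\varepsilon\gamma=\alpha_3\alpha_1^{-1}$, so that $\beta\cdot\alpha^{-1}=\alpha_3\alpha_2^{-1}=\varepsilon\delta^{-1}$ once the copies of $\alpha_1^{\pm1}$ cancel. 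Using that $\tau(\alpha_1^{-1})=\iota(\alpha_1)$ together with the distinctness of the $\iota(\alpha_i)$, every concatenation occurring here is admissible with no cancellation — in particular $\alpha$ and $\beta$ have $\gamma$ as maximal common initial segment (as $\ini_d(\alpha_2)\ne\ini_d(\alpha_3)$), with $\gamma<\alpha,\beta$ since $\alpha_2,\alpha_3$ are non-trivial. Finally, $\alpha$, $\beta$, $\beta\cdot\alpha^{-1}$ and $\beta\alpha$ have respective initial/terminal data $(\tau(\alpha_1),\tau(\alpha_2))$, $(\tau(\alpha_1),\tau(\alpha_3))$, $(\tau(\alpha_2),\tau(\alpha_3))$, $(\tau(\alpha_1),\tau(\alpha_3))$, and since the $\tau(\alpha_i)$ are pairwise distinct each of these has distinct initial and terminal data, hence is a cycle. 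This is the required configuration.

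The load-bearing step is the dictionary of the first paragraph — checking that concatenation of closed paths at $v$ is modelled exactly by the partial operation of Lemma~\ref{lem:CoolCombs}, so that junction-admissibility reads off as ``$\iota\ne\tau$'', and matching Definition~\ref{def:LocOr} with the distinguished element $s$. Once this is in place, $(\Leftarrow)$ is a three-inequality computation, and $(\Rightarrow)$ reduces to the uniform gadget $\gamma=\alpha_1^{-1}$, $\alpha=\alpha_2\alpha_1^{-1}$, $\beta=\alpha_3\alpha_1^{-1}$, which turns an arbitrary triple of closed paths with distinct endpoint data into the required configuration with no case analysis.
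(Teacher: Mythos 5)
Your proposal is correct and follows essentially the same route as the paper: both directions are handled by translating closed paths at $v$ into the abstract setting of Lemma~\ref{lem:CoolCombs} via the dictionary $P=\{\text{closed paths at }v\}$, $S=s^{-1}(v)\sqcup C_v$, with admissibility of a concatenation read off as $\iota\ne\tau$, and the forward direction extracted from the failure of the lemma's triple hypothesis using the gadget $\alpha=\alpha_2\alpha_1^{-1}$, $\beta=\alpha_3\alpha_1^{-1}$ (the paper uses the mirror-image $\alpha_2^{-1}\alpha_1$, $\alpha_3^{-1}\alpha_1$). The only substantive difference is that you verify in detail the converse implication, which the paper dismisses as clear; your three-inequality argument there is a valid filling-in of that step.
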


\begin{proof}
It is clear that if such $\alpha$ and $\beta$ exist, then $v$ does not admit a local orientation. Now let $v$ denote any branching vertex in $(E,C)$ and assume instead that such $\alpha$ and $\beta$ do not exist. Letting $\pi$ denote the map $s^{-1}(v) \sqcup r^{-1}(v)^{-1} \to s^{-1}(v) \sqcup C_v$ given by $\pi(e):=e$ and $\pi(e^{-1}):=[e]$, we define sets $P:=\{\text{closed paths based at $v$}\}$ and $S:=s^{-1}(v) \sqcup C_v$ along with maps $\iota,\tau \colon P \to S$ given by $\iota(\alpha):=\pi(\ini_d(\alpha))$ and $\tau(\alpha):=\pi(\ter_d(\alpha)^{-1})$. Obviously, given two closed paths $\alpha_1,\alpha_2 \in P$, the concatenated product $\alpha_2\alpha_1$ is in $P$ if and only if $\iota(\alpha_2) \ne \tau(\alpha_1)$. In fact, the only assumption of Lemma~\ref{lem:CoolCombs} which is not obviously satisfied is that either $\vert \iota(\{\alpha_1,\alpha_2,\alpha_3\}) \vert \le 2$ or $\vert \tau(\{\alpha_1,\alpha_2,\alpha_3\}) \vert \le 2$ for all triples $\alpha_1,\alpha_2,\alpha_3 \in P$. Assume in order to reach a contradiction that
$$\vert \iota(\{\alpha_1,\alpha_2,\alpha_3\}) \vert = \vert \tau(\{\alpha_1,\alpha_2,\alpha_3\}) \vert = 3$$
for one such triple. Then $\alpha:=\alpha_2^{-1}\alpha_1$ and $\beta:=\alpha_3^{-1}\alpha_1$ are cycles with $\alpha \wedge \beta=\alpha_1< \alpha,\beta$ such that both $\beta \cdot \alpha^{-1}=\alpha_3^{-1}\alpha_2$ and $\beta\alpha=\alpha_3^{-1}\alpha_1\alpha_2^{-1}\alpha_1$ are cycles as well, contradicting our assumption. It now follows immediately from Lemma~\ref{lem:CoolCombs} that $v$ admits a local orientation.
\end{proof}

With the above characterisation at hand, we can already prove that Condition (N) is a necessary condition for exactness of $\mathcal{O}(E,C)$.

\begin{proposition}\label{prop:Exactness}
Let $(E,C)$ denote a finitely separated graph and consider the statements
\begin{enumerate}
\item[\textup{(1)}] The $C^*$-algebra $\mathcal{O}(E,C)$ is exact.
\item[\textup{(2)}] Every stabiliser of the partial action $\theta^{(E,C)}$ is amenable \textup{(}hence trivial or cyclic\textup{)}.
\item[\textup{(3)}] $(E,C)$ satisfies Condition \textup{(}N\textup{)}.
\end{enumerate}
Then $(1) \Rightarrow (2) \Rightarrow (3)$.
\end{proposition}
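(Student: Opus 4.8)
Both implications are established by contraposition, the common engine being to exhibit the full group $C^*$-algebra $C^*(\mathbb{F}_2)$ of a non-abelian free group --- which is well known (Wassermann) to be non-exact --- as a subquotient, up to Morita equivalence, of $\mathcal{O}(E,C)\cong C_0(\Omega(E,C))\rtimes\mathbb{F}$. Since exactness is inherited by ideals, quotients and Morita equivalent $C^*$-algebras, the presence of such a subquotient forces $\mathcal{O}(E,C)$ to be non-exact. Thus it suffices to prove: if some stabiliser of $\theta^{(E,C)}$ is non-amenable then $\mathcal{O}(E,C)$ is non-exact (this gives $\neg(2)\Rightarrow\neg(1)$), and if $(E,C)$ fails Condition (N) then some stabiliser is non-amenable (this gives $\neg(3)\Rightarrow\neg(2)$).

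For the second implication, suppose $(E,C)$ does not satisfy Condition (N). By Proposition~\ref{prop:NonCondN} --- and an inspection of its proof --- there are a branching vertex $v$ and cycles $\alpha=\delta\gamma$, $\beta=\varepsilon\gamma$ based at $v$ with $\gamma=\alpha\wedge\beta<\alpha,\beta$ a \emph{non-trivial} common initial subpath, such that $\beta\cdot\alpha^{-1}=\varepsilon\delta^{-1}$ and $\beta\alpha$ are again cycles. First one checks that $H:=\langle\alpha,\beta\rangle\leq\mathbb{F}$ is free of rank two: two elements of a free group generate a cyclic group only if they are common powers of a cyclically reduced word, which is incompatible with $\gamma$ being a proper, non-trivial common initial subpath along which $\alpha$ and $\beta$ necessarily diverge. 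The plan is then to build a configuration $\xi\in\Omega(E,C)$ with $\xi=\xi H$, so that $H\subseteq\mathrm{Stab}(\xi)$. This is done in three steps: (i) verify that every reduced word in $\alpha^{\pm1},\beta^{\pm1}$ is an admissible path --- here the full strength of the hypotheses of Proposition~\ref{prop:NonCondN} enters, with $\alpha,\beta$ being cycles handling the equal-sign junctions, $\beta\alpha$ being a cycle handling the mixed forward junctions and their iterates, and $\beta\cdot\alpha^{-1}=\varepsilon\delta^{-1}$ being a cycle (together with the maximality of $\gamma$) handling the junctions involving an inverse edge and the resulting cancellations; (ii) deduce from (i) that the right-convex closure $\langle H\rangle=\mathrm{conv}(H)$ is an $(E,C)$-animal in the sense of Definition~\ref{def:Animal}, and check that it is right-$H$-invariant (a suffix $w$ of $h_1\in H$ satisfies $wh=u^{-1}(h_1h)$ with $u$ a prefix of $h_1h\in H$, hence $wh$ is again a suffix of an element of $H$); (iii) complete $\mathrm{conv}(H)$ to a configuration $\xi\supseteq\mathrm{conv}(H)$ by iteratively adjoining, at each incomplete local configuration, the missing forward edges of the relevant vertex and one inverse edge per colour class, and --- using that $H$ acts freely by right translation on $\mathbb{F}$ while the local configurations along an $H$-orbit of paths agree --- carry out this completion $H$-equivariantly. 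The resulting $\xi$ satisfies $\mathrm{Stab}(\xi)\supseteq H\cong\mathbb{F}_2$, which is non-amenable, contradicting (2).

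For the first implication, suppose some $\xi\in\Omega(E,C)$ has non-amenable stabiliser $H=\mathrm{Stab}(\xi)$. Being a subgroup of the free group $\mathbb{F}$ it is free, and non-amenability forces it to be non-cyclic, so $\mathbb{F}_2\hookrightarrow H$, whence $C^*(\mathbb{F}_2)\hookrightarrow C^*(H)$ and $C^*(H)$ is not exact (exactness passing to $C^*$-subalgebras). Let $O$ denote the $\mathbb{F}$-orbit of $\xi$ under $\theta^{(E,C)}$. One shows --- using zero-dimensionality of $\Omega(E,C)$, compact-openness of the domains $\Omega(E,C)_\alpha$, and the fact that $\xi$ is ``$H$-periodic'' ($\xi=\xi H$) --- that $O$ is an invariant \emph{locally closed} subspace, necessarily discrete and $\mathbb{F}$-equivariantly homeomorphic to $\mathbb{F}/H$. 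Restricting $\theta^{(E,C)}$ to the closed invariant set $\overline{O}$ presents $C_0(\overline{O})\rtimes\mathbb{F}$ as a quotient of $\mathcal{O}(E,C)$, and therein the open invariant $O$ corresponds to the ideal $C_0(O)\rtimes\mathbb{F}=C_0(\mathbb{F}/H)\rtimes\mathbb{F}$, which by (the partial-action form of) Green's imprimitivity theorem is Morita equivalent to $C^*(H)$ --- equivalently, the restriction of the transformation groupoid to $O$ is a transitive {\'e}tale groupoid with isotropy group $H$. Hence $C^*(H)$ is Morita equivalent to a subquotient of $\mathcal{O}(E,C)$, so exactness of $\mathcal{O}(E,C)$ would make $C^*(H)$ exact --- a contradiction. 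Therefore every stabiliser is amenable, and being free it is trivial or infinite cyclic.

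The main obstacle is the claim, in the last paragraph, that the orbit of an ``$H$-periodic'' configuration is locally closed in $\Omega(E,C)$ (equivalently, that one can pass to a suitable sub-system without losing non-amenability of the stabiliser); this is exactly where the approach of \cite{AEK} must be adapted to finitely separated graphs, and it is what prevents $(1)\Rightarrow(2)$ from being formal. A lesser technical point is the careful verification in steps (i)--(iii) above, in particular that the completion of $\mathrm{conv}(H)$ can genuinely be carried out in an $H$-equivariant fashion.
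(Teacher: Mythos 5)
Your contrapositive two-step structure is the paper's: for $\neg(3)\Rightarrow\neg(2)$ the paper likewise takes the cycles $\alpha,\beta$ from Proposition~\ref{prop:NonCondN}, checks that every reduced word in $\alpha^{\pm1},\beta^{\pm1}$ is admissible (whence $F=\langle\alpha,\beta\rangle\cong\mathbb{F}_2$), and produces an $F$-invariant configuration; where you describe an iterative, $H$-equivariant completion of $\mathrm{conv}(H)$, the paper writes the configuration down in closed form as $\xi=\bigsqcup_{\sigma\in F}\chi\cdot\sigma$ for a finite ``fundamental domain'' $\chi$ cut out of an auxiliary configuration $\eta$, which settles your step (iii) more cleanly. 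That half of your argument is sound and essentially identical to the paper's.

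The genuine gap is in your proof of $(1)\Rightarrow(2)$, and it is exactly the one you flag: the claim that the orbit $O$ of a configuration with non-amenable stabiliser is locally closed (discrete in $\overline{O}$). This can fail. Nothing prevents a configuration $\xi$ with $\mathrm{Stab}(\xi)=H\cong\mathbb{F}_2$ from also being recurrent, i.e.\ admitting $\alpha_n\in\xi$ with $\theta_{\alpha_n}(\xi)\to\xi$ but $\theta_{\alpha_n}(\xi)\neq\xi$: a configuration can agree with its translate on arbitrarily large balls without being fixed by that translate, so the orbit may accumulate on itself. In that case $C_0(O)\rtimes\mathbb{F}$ is not an ideal of $C_0(\overline{O})\rtimes\mathbb{F}$ and the Green--imprimitivity identification with $C^*(H)$ is unavailable, so your argument does not establish $(1)\Rightarrow(2)$ for an \emph{arbitrary} point with non-amenable stabiliser, which is what statement (2) requires. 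The paper does not attempt this route: it invokes \cite[Proposition 7.1(i)]{AEK}, a general result for partial actions with clopen domains asserting that exactness of the full crossed product forces $C^*(H)$ to be exact for \emph{every} stabiliser $H$, with no hypothesis on the topology of the orbit; combined with Wassermann's non-exactness of $C^*(\mathbb{F}_2)$ (and the fact that subgroups of free groups are free, so non-amenable stabilisers contain $\mathbb{F}_2$), this gives $(1)\Rightarrow(2)$ in one line. To repair your write-up you should either cite that result or reprove it; the orbit-restriction argument as it stands does not close.
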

\begin{proof}
Assuming that $(E,C)$ does not satisfy Condition (N), there are cycles $\alpha$ and $\beta$ as in Proposition~\ref{prop:NonCondN}. Observe that any reduced product of $\alpha$'s, $\beta$'s and their inverses is admissible, and denote by $F$ the free subgroup of $\mathbb{F}$ generated by $\alpha$ and $\beta$. If $F$ were of rank $1$, then we would have $\alpha=\gamma^m$ and $\beta=\gamma^n$ for some cycle $\gamma$ and non-zero integers $m,n$. As $\beta\alpha$ is admissible, we see that $m$ and $n$ have the same sign, contradicting $\alpha \wedge \beta < \alpha,\beta$. We conclude that $F \cong \mathbb{F}_2$. Now $\omega:=\langle F \rangle$ defines an $(E,C)$-animal, and we can find a configuration $\xi \in \Omega(E,C)_\omega$ with $F \le \text{Stab}(\xi)$. Formally, this construction can be carried out as follows: Take any $\eta \in \Omega(E,C)$ with $\{\alpha,\beta,\ter_d(\alpha)^{-1},\ter_d(\beta)^{-1}\} \subset \eta$ and consider the animal
$$\chi:=\{\gamma \in \eta \mid \gamma \not\ge \alpha,\beta,\ter_d(\alpha)^{-1},\ter_d(\beta)^{-1}\}.$$
Then one may verify that $\xi:=\bigsqcup_{\sigma \in F} \chi \cdot \sigma$ defines a configuration, and by construction $F \le \text{Stab}(\xi)$, so (2) does not hold. It finally follows that $\mathcal{O}(E,C)$ is non-exact by \cite[Proposition 7.1(i)]{AEK}.
\end{proof}

The aim of the rest of this paper is to prove that Condition (N) in fact implies nuclearity of $\mathcal{O}(E,C)$. Roughly speaking, the idea is to decompose the graph into one part 'spanned' by the branching vertices, and a complementary part containing no branching vertices, and then deal with these two subgraphs separately. In the remainder of this section, we will treat the former graph.

\begin{definition}
Given any finitely separated graph $(E,C)$, define a relation on $E^0$ by $u \multimap v$ if there is an admissible path $\alpha \colon u \to v$ and a cycle $\beta$ based at $v$, such that $\alpha^{-1}\beta\alpha$ is admissible. Note that $\multimap$ is transitive, but in general it is not reflexive, symmetric or antisymmetric. In fact, $u \multimap u$ if and only if $u$ admits a cycle as we may take $\alpha=u$.
\end{definition}

\begin{definition}
Let $(E,C)$ denote a finitely separated graph. The \textit{branching subgraph} $(E_{\textup{Br}},C^{\textup{Br}})$ is the full subgraph with vertex set 
$$E_{\textup{Br}}^0 := \big\{ u \in E^0 \mid u \multimap v \text{ for a branching vertex } v\big\}.$$
\end{definition}

\begin{remark} 
Note that if there is a closed path based at $u$ passing through a branching vertex $v$, then automatically $u \multimap v$.
\end{remark}

\begin{cexample}
The branching subgraph of our example is as indicated below:
\begin{center}
\begin{tikzpicture}[scale=0.70]
 \SetUpEdge[lw         = 1.5pt,
            labelcolor = white]
  \tikzset{VertexStyle/.style = {draw,shape = circle,fill = white, inner sep=4pt,minimum size=10pt,outer sep=4pt}}

  \SetVertexNoLabel

  \Vertex[x=-6,y=0]{u1}  
    
  \Vertex[x=3,y=3]{u8}
  \Vertex[x=-6,y=3]{u6}

  \Vertex[x=-6,y=-3]{u10}
  \Vertex[x=3,y=-3]{u12}
  
  \tikzset{VertexStyle/.style = {draw,shape = circle,fill = \niceblue, inner sep=4pt,minimum size=10pt,outer sep=4pt}}
  \Vertex[x=-3,y=0]{u2}
  \Vertex[x=0,y=0]{u3}
  \Vertex[x=-3,y=3]{u7}
  \Vertex[x=-3,y=-3]{u11}

  \tikzset{EdgeStyle/.style = {->,color={\niceblue}}}
  \Edge[](u2)(u3)
  \Edge[](u6)(u1)
  
  \tikzset{EdgeStyle/.style = {->,bend left=20,color={\niceblue}}}
  \Edge[](u2)(u1)
  \Edge[](u8)(u3)
  \Edge[label=$2$](u11)(u3)
  
  \tikzset{EdgeStyle/.style = {->,bend right=20,color={\niceblue}}}
  \Edge[label=$3$](u7)(u3)
  \Edge[](u12)(u3)

  \tikzset{EdgeStyle/.style = {->,color={\nicegreen}}}
  \Edge[](u1)(u10)  
  
  \tikzset{EdgeStyle/.style = {->,bend left=20,color={\nicegreen}}}
  \Edge[](u12)(u3)
  
  \tikzset{EdgeStyle/.style = {->,bend right=20,color={\nicegreen}}}
  \Edge[](u11)(u3)
  \Edge[label=$2$](u2)(u1)
  
  \tikzset{EdgeStyle/.style = {->,bend left=20,color={\nicered}}}
  \Edge[](u7)(u3)
  
  \tikzset{EdgeStyle/.style = {->,bend right=20,color={\nicered}}}
  \Edge[](u8)(u3)
\end{tikzpicture}
\end{center}
\exend
\end{cexample}

Next, we introduce the notion of a (\textit{proper}) \textit{orientation}.

\begin{definition}\label{def:Or}
Let $(E,C)$ denote a finitely separated graph. A \textit{proper orientation} of $(E,C)$ is a decomposition $E^1=E^1_+ \sqcup E^1_-$ such that, for every $v \in E^0$, one of the following holds:
\begin{enumerate}
\item $E^1_- \cap r^{-1}(v) \in C_v$ and $E^1_+ \cap s^{-1}(v) = \emptyset$.
\item $E^1_- \cap r^{-1}(v) = \emptyset$ and $\vert E^1_+ \cap s^{-1}(v) \vert = 1$.
\end{enumerate}
If (2) is replaced by the weaker assumption
\begin{enumerate}
\item[(2')] $E^1_- \cap r^{-1}(v) = \emptyset$ and $\vert E^1_+ \cap s^{-1}(v) \vert \le 1,$
\end{enumerate}
then it will simply be called an \textit{orientation}. We shall often regard an orientation as a map $\mathfrak{o} \colon E^1 \to \{-1,1\}$, where
$$\mathfrak{o}(e)=\left\{
\begin{array}{cl}
1 & \If e \in E^1_+ \\
-1 & \If e \in E^1_- 
\end{array}
\right., $$
and as in \cite{AL}, an admissible path of the form
$$e_n^{\mathfrak{o}(e_n)}e_{n-1}^{\mathfrak{o}(e_{n-1})} \cdots e_2^{\mathfrak{o}(e_2)}e_1^{\mathfrak{o}(e_1)}$$
will be referred to as \textit{positively oriented}, while an admissible path of the form
$$e_n^{-\mathfrak{o}(e_n)}e_{n-1}^{-\mathfrak{o}(e_{n-1})} \cdots e_2^{-\mathfrak{o}(e_2)}e_1^{-\mathfrak{o}(e_1)}$$
is called \textit{negatively oriented}. \exend
\end{definition}

We first mention a pair of trivial, yet important, observations.
\begin{lemma}\label{LemmaOrient}
Assume that $(E,C)$ is oriented. Then every path $\alpha$ is of the form $\alpha=\alpha_{-}\alpha_{+}$, where $\alpha_{+}$ and $\alpha_{-}$ are \textup{(}possibly trivial\textup{)} positively and negatively oriented paths, respectively.
\end{lemma}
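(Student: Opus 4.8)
The plan is to identify the only local obstruction to such a decomposition and then rule it out using the defining conditions of an admissible path. Call a symbol $s \in \widehat{E}^1$ \emph{positive} if $s \in E^1_+$ or $s = e^{-1}$ for some $e \in E^1_-$, and \emph{negative} otherwise (i.e.\ if $s \in E^1_-$ or $s = e^{-1}$ for some $e \in E^1_+$); then positively (resp.\ negatively) oriented paths are exactly those all of whose symbols are positive (resp.\ negative), and $s$ is positive if and only if $s^{-1}$ is negative. If $\alpha$ is trivial we simply take $\alpha_\pm = \alpha$, so assume $\alpha = t_n \cdots t_1$ with each $t_i \in \widehat{E}^1$ and $n \geq 1$. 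The statement $\alpha = \alpha_- \alpha_+$ is then equivalent to saying that the positive symbols among $t_1, \dots, t_n$ form an initial segment $t_1, \dots, t_m$. Thus it suffices to prove the implication $(\star)$: \emph{for any admissible subpath $s_2 s_1$ of $\alpha$, if $s_1$ is negative then $s_2$ is negative.} Granting $(\star)$, let $m$ be maximal with $t_1, \dots, t_m$ all positive; if $m < n$ then $t_{m+1}$ is negative by maximality, and $(\star)$ applied repeatedly gives that $t_{m+1}, \dots, t_n$ are all negative, so $\alpha_+ := t_m \cdots t_1$ is positively oriented, $\alpha_- := t_n \cdots t_{m+1}$ is negatively oriented, and $\alpha = \alpha_- \alpha_+$.

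To prove $(\star)$, I would set $w := s(s_2) = r(s_1)$ and suppose $s_1$ is negative. The crucial observation is that this forces the orientation type of $w$ in the sense of Definition~\ref{def:Or}. If $s_1 = e$ with $e \in E^1_-$, then $e \in E^1_- \cap r^{-1}(w)$, so $w$ cannot satisfy (2') and hence satisfies (1); in particular $E^1_+ \cap s^{-1}(w) = \emptyset$, so $s^{-1}(w) \subset E^1_-$. If instead $s_1 = e^{-1}$ with $e \in E^1_+$, then $e \in E^1_+ \cap s^{-1}(w)$, so $w$ cannot satisfy (1) and hence satisfies (2'); in particular $r^{-1}(w) \subset E^1_+$ and $E^1_+ \cap s^{-1}(w) = \{e\}$. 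In either case, running through the possible forms of $s_2$ leaving $w$ (namely $s_2 \in s^{-1}(w)$ or $s_2 = f^{-1}$ with $f \in r^{-1}(w)$) one checks that $s_2$ is negative except for one form: $s_2 = f^{-1}$ with $f \in E^1_- \cap r^{-1}(w)$ in the first scenario, and $s_2 = e$ in the second. But the first is impossible, since $E^1_- \cap r^{-1}(w)$ is a single class of $C_w$ containing both $e$ and $f$, whence $[e] = [f]$, contradicting admissibility of the subword $f^{-1} e = s_2 s_1$; and the second is impossible, since $s_2 s_1 = e e^{-1}$ likewise contradicts admissibility. Hence $s_2$ is negative, proving $(\star)$.

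I do not anticipate a genuine difficulty here: all the content is in the finite case-check above, and the only things demanding care are keeping track of the Raeburn convention (paths read from the right, so in $s_2 s_1$ the symbol $s_2$ is traversed after $s_1$) and invoking the appropriate one of the two defining clauses of an admissible path in the two exceptional sub-cases --- the clause forbidding $e^{-1}f$ with $[e] = [f]$ for the subword $f^{-1}e$, and the clause forbidding $ef^{-1}$ with $e = f$ for the subword $ee^{-1}$. Everything else follows directly from the two clauses in the definition of an orientation.
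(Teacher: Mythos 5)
Your proof is correct and follows essentially the same route as the paper: both arguments reduce the lemma to the single local check that a negative symbol can only be followed by a negative symbol, and both rule out the two exceptional sub-cases by invoking, respectively, clause (1) of the definition of an orientation together with the admissibility condition forbidding $f^{-1}e$ with $[e]=[f]$, and clause (2') together with the condition forbidding $ee^{-1}$. The paper phrases this as ``if $f^{\varepsilon}e^{-\mathfrak{o}(e)}$ is admissible then $\varepsilon=-\mathfrak{o}(f)$,'' which is exactly your statement $(\star)$.
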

\begin{proof}
We simply have to check that if $f^\varepsilon e^{-\mathfrak{o}(e)}$ is admissible, then $\varepsilon=-\mathfrak{o}(f)$. Assume first that $\mathfrak{o}(e)=-1$; then $r(e)$ must satisfy Definition~\ref{def:Or}(1) since $E^1_- \cap r^{-1}(v) \ne \emptyset$.  Now if $\varepsilon=1$ so that $s(f)=r(e)$, then necessarily $\mathfrak{o}(f)=-1=-\varepsilon$. Conversely if $\varepsilon=-1$, then $r(f)=r(e)$ with $[e] \ne [f]$, so $\mathfrak{o}(f)=1=-\varepsilon$. The case $\mathfrak{o}(e)=1$ is completely similar.
\end{proof}
\begin{lemma}\label{lem:MaximalPosPath}
Assume that $(E,C)$ is properly oriented and let $\xi \in \Omega(E,C)$. Then for every $n \ge 1$, there is a unique positively oriented admissible path $\xi_n \in \xi$ of length $n$.
\end{lemma}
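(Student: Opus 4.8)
I want to produce, for each $n \ge 1$, a unique positively oriented admissible path $\xi_n \in \xi$ of length $n$, and I expect to do this inductively, building $\xi_n$ one edge at a time from $\xi_{n-1}$. The base of the induction ($n=0$, or $n=1$) is where I first use the configuration axioms of Definition~\ref{def:DynamicalPicture}; the inductive step is where the proper-orientation axioms of Definition~\ref{def:Or} do the work.

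First I would record the following local fact: if $\alpha \in \xi$ is any admissible path, then the local configuration $\xi_\alpha$ is of the form $s^{-1}(v) \sqcup \{e_X^{-1} \mid X \in C_v\}$ for $v = r(\alpha)$ and a choice of representative $e_X \in X$, by axiom (c). Translating back, this says: the admissible paths of the form $f\alpha$ with $f \in E^1$, $f\alpha \in \xi$, are exactly those with $f \in s^{-1}(v)$; and the admissible paths of the form $f^{-1}\alpha$ with $f^{-1}\alpha \in \xi$ are exactly those with $f = e_X$ for some $X \in C_v$, i.e. exactly one edge from each class $X \in C_v$. (One must also check admissibility of the extension, i.e. conditions (1),(2) in the definition of admissible path — but condition (1) is automatic when we append an edge $f$ with $\ini_d(\alpha)$ not of the form $f^{-1}$, which will be the case in the positively oriented situation after the first step, and condition (2) is precisely the statement that we pick one representative per class.)

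Now the inductive step. Suppose $\xi_{n-1} \in \xi$ is the unique positively oriented admissible path of length $n-1$, with $v := r(\xi_{n-1})$. By Definition~\ref{def:Or}, $v$ satisfies either (1) or (2). In case (2), there is a unique edge $e \in E^1_+ \cap s^{-1}(v)$; since $e \in s^{-1}(v)$, the extension $e\,\xi_{n-1}$ lies in $\xi$ by the local-configuration description, it is admissible (condition (1) of admissibility holds because $\ini_d(\xi_{n-1}) \ne e^{-1}$ — indeed $\ini_d(\xi_{n-1})$ is a positively oriented symbol, so if it were some $f^{-1}$ then $f \in E^1_-$, but $E^1_- \cap r^{-1}(v) = \emptyset$ in case (2), contradiction — or if $n=1$ the condition is vacuous), and it is positively oriented since $\mathfrak{o}(e) = 1$. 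For uniqueness: any positively oriented length-$n$ path in $\xi$ extending something must, by Lemma~\ref{LemmaOrient} applied in reverse or just by the definition of positively oriented, be of the form $f\,\xi_{n-1}'$ with $\mathfrak{o}(f)=1$; its initial segment of length $n-1$ is positively oriented and in $\xi$ (by right-convexity, axiom (b)), hence equals $\xi_{n-1}$ by induction; and then $f \in E^1_+ \cap s^{-1}(v) = \{e\}$ forces $f = e$. In case (1), we have $E^1_+ \cap s^{-1}(v) = \emptyset$ and $E^1_- \cap r^{-1}(v) = X \in C_v$ is a whole class. A positively oriented extension of $\xi_{n-1}$ by one edge must append a symbol $f^{\mathfrak{o}(f)}$; since no $f$ with $s(f)=v$ has $\mathfrak{o}(f)=1$, the only option is to append $f^{-1}$ with $\mathfrak{o}(f) = -1$, i.e. $f \in E^1_- \cap r^{-1}(v) = X$. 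By the local-configuration description there is exactly one $e_X \in X$ with $e_X^{-1}\xi_{n-1} \in \xi$, and admissibility condition (2) for the new subpath $e_X^{-1}\,\ter_d(\xi_{n-1})$ needs checking — here one uses that $\ter_d(\xi_{n-1})$, being positively oriented, is either an edge $g$ with $\mathfrak{o}(g)=1$ and hence $[g] \ne [e_X]$ since $[e_X] = X \subset E^1_-$ while $g \in E^1_+$ (if $\ter_d(\xi_{n-1}) = g$ is a forward edge with range $v$... actually $\ter_d(\xi_{n-1})$ has range $v$; if it is a bare edge $g \in E^1_+$ then $g \notin X$ so $[g]\ne[e_X]$), or $\ter_d(\xi_{n-1}) = g^{-1}$ and then condition (2) is automatic. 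This gives existence; uniqueness follows as before from induction on the length-$(n-1)$ initial segment together with the uniqueness of the representative $e_X$.

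**Main obstacle.** The genuine content is bookkeeping of the admissibility conditions (1) and (2) on $\widehat{E}$-paths when appending an edge: I must make sure that in case (1) the appended $e_X^{-1}$ really does give an admissible path (no repeated $[e]$), and in case (2) the appended $e$ does not create a forbidden $ef^{-1}$ subpath — and crucially that the symbol $\ini_d(\xi_{n-1})$ has the right shape for this, which is itself a consequence of $\xi_{n-1}$ being positively oriented and of which of Definition~\ref{def:Or}(1)/(2) holds at $r(\xi_{n-1})$. Lemma~\ref{LemmaOrient} is the tool that organizes this: a positively oriented path is its own ``$\alpha_+$'' part, and the structural constraint it encodes (which forbids a negatively-then-positively oriented transition) is exactly what rules out the bad configurations. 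Once these local checks are in place, the induction runs smoothly and uniqueness is free at each stage via right-convexity (axiom (b)) of the configuration $\xi$.
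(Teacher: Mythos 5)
Your argument is the intended one: the paper disposes of this lemma with the single sentence ``This is clear from Definition~\ref{def:Or}'', and your induction is precisely the unpacking of that remark --- at each stage the alternative (1)/(2) at $v=r(\xi_{n-1})$, combined with axiom (c) of Definition~\ref{def:DynamicalPicture}, singles out exactly one positively oriented symbol that can be prepended, and right-convexity gives uniqueness of the length-$(n-1)$ initial segment. One local slip in your case (2) should be repaired. Since the new symbol is attached at the range end of $\xi_{n-1}$, the adjacent symbol is $\ter_d(\xi_{n-1})$, not $\ini_d(\xi_{n-1})$; and your claim that this symbol cannot have the form $f^{-1}$ is false: such an $f$ satisfies $r(f^{-1})=s(f)=v$, i.e.\ $f\in E^1_-\cap s^{-1}(v)$, which Definition~\ref{def:Or}(2) does not exclude (it only forces $E^1_-\cap r^{-1}(v)=\emptyset$). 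The extension $e\,\xi_{n-1}$ is nevertheless admissible, and genuinely of length $n$, for the simpler reason that $e\in E^1_+$ while $f\in E^1_-$, so $e\ne f$ and the subpath $ef^{-1}$ violates neither admissibility condition (1) nor causes cancellation in $\mathbb{F}$. With that correction the proof is complete; your treatment of case (1), where the appended symbol is $e_X^{-1}$ for the unique class $X=E^1_-\cap r^{-1}(v)\in C_v$ and condition (2) of admissibility is checked against a bare terminal symbol $g\in E^1_+\cap r^{-1}(v)\not\subset X$, is correct as written.
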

\begin{proof}
This is clear from Definition~\ref{def:Or}.
\end{proof}

We can now easily prove that properly oriented graphs give rise to topologically amenable actions.

\begin{proposition}
If $(E,C)$ is properly oriented, then the partial action $\theta^{(E,C)}$ admits a topological F{\o}lner sequence.
\end{proposition}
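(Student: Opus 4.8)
The plan is to write down an explicit topological F{\o}lner sequence built from the canonical positive rays provided by Lemma~\ref{lem:MaximalPosPath}. Since $(E,C)$ is properly oriented, Definition~\ref{def:Or} forces every vertex to carry at least one incident edge, so $\Omega(E,C)$ consists entirely of configurations, and for each such $\xi$ one has $\mathbb{F}^\xi=\{\alpha\in\mathbb{F}\mid\alpha\in\xi\}=\xi$. By Lemma~\ref{lem:MaximalPosPath} the positively oriented admissible paths $\xi_k\in\xi$ of lengths $k\ge 1$ are nested (the length-$k$ initial subpath of $\xi_{k+1}$ is positively oriented and lies in $\xi$ by right-convexity, hence equals $\xi_k$), and together with $\xi_0:=1$ they form an increasing chain $1=\xi_0<\xi_1<\xi_2<\cdots$ inside $\xi$. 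I would then set
\[
F_n^\xi:=\{\xi_0,\xi_1,\dots,\xi_n\}\subset\xi=\mathbb{F}^\xi ,
\]
a set of cardinality $n+1$, and first dispatch the easy part of Definition~\ref{def:FolnerDef}: the map $\xi\mapsto F_n^\xi$ is locally constant because on the compact-open set $\Omega(E,C)_{\alpha^{-1}}=\{\xi\mid\alpha\in\xi\}$ attached to a positively oriented admissible path $\alpha$ of length $n$ one has $\xi_n=\alpha$ by uniqueness, and these sets partition $\Omega(E,C)$.

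The content is the F{\o}lner estimate. Fix $\alpha\in\mathbb{F}\setminus\{1\}$ and a compact $K\subset\Omega(E,C)_{\alpha^{-1}}$, so that $\alpha\in\xi$ for all $\xi\in K$. I would prove the uniform bound
\[
\bigl|F_n^\xi\cdot\alpha^{-1}\ \bigtriangleup\ F_n^{\theta_\alpha(\xi)}\bigr|\le 2|\alpha|\qquad(\xi\in K),
\]
with $|\alpha|$ the reduced word length, from which the F{\o}lner quotient is at most $2|\alpha|/(n+1)\to 0$, since this bound does not involve $\xi$. To get it, factor $\alpha=\alpha_-\alpha_+$ by Lemma~\ref{LemmaOrient} into its maximal positively oriented initial segment $\alpha_+$ and negatively oriented terminal segment $\alpha_-$; these, and hence $m:=|\alpha_+|$ and $\ell:=|\alpha_-|$ with $m+\ell=|\alpha|$, depend only on $\alpha$. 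Using $\theta_\alpha(\xi)=\theta_{\alpha_-}(\theta_{\alpha_+}(\xi))$ together with subadditivity and right-translation invariance of the symmetric difference, it suffices to show that passing from $\xi$ to $\xi':=\theta_{\alpha_+}(\xi)$ changes the F{\o}lner set (modulo the translation by $\xi_m^{-1}$) by at most $2m$ elements, and that passing from $\xi'$ to $\theta_{\alpha_-}(\xi')$ changes it by at most $2\ell$ elements.

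For the first of these: $\alpha_+\le\alpha\in\xi$ forces $\alpha_+=\xi_m$ by right-convexity and uniqueness, so $\xi'=\xi\cdot\xi_m^{-1}$, and by nestedness $\xi_{m+j}\cdot\xi_m^{-1}$ is positively oriented of length $j$ and lies in $\xi'$, hence is the $j$-th term of the positive ray of $\xi'$; thus $F_n^{\xi'}=\{\xi_m,\dots,\xi_{m+n}\}\cdot\xi_m^{-1}$, differing from $F_n^\xi\cdot\xi_m^{-1}=\{\xi_0,\dots,\xi_n\}\cdot\xi_m^{-1}$ in at most $2m$ elements. For the second: one checks $\alpha_-\in\xi'$, so $\theta_{\alpha_-}(\xi')=\xi'\cdot\alpha_-^{-1}$ with $\alpha_-^{-1}$ positively oriented of length $\ell$, and peeling off $\alpha_-^{-1}$ one letter at a time realises $\theta_{\alpha_-}(\xi')$ through $\ell$ steps of the form $\zeta\mapsto\zeta\cdot t=\theta_{t^{-1}}(\zeta)$, where $t$ is a positively oriented letter with $t^{-1}\in\zeta$; a short computation then shows that the positive ray of $\zeta\cdot t$ equals $\{1\}\cup\{\zeta_j\cdot t\mid j\ge 0\}$, so each such step perturbs $F_n$ by exactly two elements. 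Accumulating the $\ell$ steps and combining with the previous bound yields $2m+2\ell=2|\alpha|$.

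The step I expect to demand the most care is the identification of these shifted configurations and their positive rays — verifying that $\zeta\cdot t$ really is a configuration whose positive ray is as claimed, and in particular that no unexpected cancellation occurs when forming $\zeta\cdot t$. This is exactly where the clopen-domain/right-convexity structure of $\Omega(E,C)$ enters, and where it matters that a positively oriented letter $t$ cannot coincide with a letter of the negatively oriented segment $\alpha_-$. Everything else is routine bookkeeping with the combinatorics of admissible paths, after which Definition~\ref{def:FolnerDef} is satisfied by $(F_n)_{n\ge 1}$.
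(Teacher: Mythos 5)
Your proof is correct and follows essentially the same route as the paper: both take $F_n^\xi$ to be the initial segments of the unique positive ray from Lemma~\ref{lem:MaximalPosPath}, split $\alpha=\alpha_-\alpha_+$ via Lemma~\ref{LemmaOrient}, and track how the ray transforms under $\theta_{\alpha_+}$ and $\theta_{\alpha_-}$ through the intermediate configuration $\theta_{\alpha_+}(\xi)$. The only cosmetic difference is that you bound the symmetric difference by $2|\alpha|$ (peeling off $\alpha_-^{-1}$ one letter at a time), while the paper compares $F_{|\alpha_+|+n}^\xi$ and $F_{|\alpha_-|+n}^{\theta_\alpha(\xi)}$ directly to get the one-sided estimate $|\alpha|/(|\alpha|+n)$ that Definition~\ref{def:FolnerDef} actually requires.
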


\begin{proof}
Define $F_n^\xi:=\{\xi_k \mid k \le n\}$ for all $\xi \in \Omega(E,C)$ and $n \ge 1$, using the notation of Lemma~\ref{lem:MaximalPosPath}. Given any $\alpha \in \mathbb{F}$ and $\xi \in \Omega(E,C)_{\alpha^{-1}}$, we write $\alpha=\alpha_-\alpha_+$ and, for the sake of notational simplicity, set $\eta:=\theta_{\alpha_+}(\xi)$. Then $\alpha_-^{-1}$ is positively oriented, so 
$$F_{\vert \alpha_+ \vert + n}^\xi=\{\beta \mid 1 < \beta \le \alpha_+\} \sqcup F_n^\eta \cdot \alpha_+ \andspace F_{\vert \alpha_- \vert + n}^{\theta_{\alpha}(\xi)} = \{\beta \mid 1<\beta \le \alpha_-^{-1}\} \sqcup F_n^\eta \cdot \alpha_-^{-1}$$
for all $n \ge 1$. We see that
$$F_{\vert \alpha_+ \vert + n}^\xi \cdot \alpha^{-1}= \{\beta \mid \alpha_-^{-1} \le \beta < \alpha^{-1}\} \sqcup F_n^\eta \cdot \alpha_-^{-1}$$
hence
$$\frac{\big\vert F_{\vert \alpha \vert + n} \cdot \alpha^{-1} \setminus F_{\vert \alpha \vert + n}^{\theta_\alpha(\xi)} \big\vert}{\big\vert F_{\vert \alpha \vert + n}^\xi \big\vert} \le \frac{\vert \alpha \vert}{\vert \alpha \vert + n}$$
with the upper bound converging to $0$ as $n \to \infty$ uniformly on $\Omega(E,C)_{\alpha^{-1}}$.
\end{proof}

In view of the above result, our goal is to extend the local orientations of a Condition (N) graph into a proper orientation of the entire branching subgraph. This requires some preparation, which we provide just below. First though, we need a bit more terminology.

\begin{definition}
A vertex $v$ is called \textit{weakly branching} if there is a cycle passing through both $v$ and a branching vertex, and an edge $e$ is called \textit{critical} if $e$ does not allow a return and $r(e)$ is weakly branching. 
\end{definition}

\begin{lemma}\label{lem:Types}
Let $(E,C)$ denote a finitely separated graph. Any edge $e \in E_{\textup{Br}}^1$ satisfies exactly one of the following:
\begin{enumerate}
\item[\textup{(1)}] $e$ is on a cycle passing through a branching vertex.
\item[\textup{(2)}] $e$ is critical.
\item[\textup{(3a)}] If $\alpha$ is a closed path in $(E_{\textup{Br}},C^{\textup{Br}})$, based at $r(e)$ and passing through a branching vertex, then $\ini_d(\alpha) \in [e]^{-1}$.
\item[\textup{(3b)}] If $\alpha$ is a closed path in $(E_{\textup{Br}},C^{\textup{Br}})$, based at $s(e)$ and passing through a branching vertex, then $\ini_d(\alpha)=e$.
\end{enumerate}
\end{lemma}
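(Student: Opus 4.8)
The plan is to split the proof into two halves: that the four alternatives are pairwise incompatible, and that at least one of them always holds. Throughout I would lean on three elementary facts. First, by the remark recorded just after the definition of the branching subgraph, a closed path based at a vertex $u$ and passing through a branching vertex $v$ forces $u \multimap v$; hence every vertex occurring on such a path lies in $E_{\textup{Br}}^0$, so the path is a path in $(E_{\textup{Br}},C^{\textup{Br}})$. Second, a cyclic rotation of a cycle, and the reverse of a cycle, are again cycles (every power $\delta^n$ of a cycle is admissible, and contiguous subpaths and reverses of admissible paths are admissible). Third, the concatenation $s_2 s_1$ of two symbols of $\widehat{E}$ (with $s_1$ read first) fails to be admissible only in the cancellation case $s_2 s_1 = e e^{-1}$ and in the colour-clash case $s_2 s_1 = e^{-1}f$ with $[e]=[f]$; in particular, for $e \in E_{\textup{Br}}^1$ and a closed path $\alpha$ based at $r(e)$, the word $e^{-1}\alpha e$ is an admissible closed path based at $s(e)$ precisely when $\ini_d(\alpha) \notin [e]^{-1}$ and $\ter_d(\alpha) \notin [e]$.

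\emph{Incompatibility.} I would argue as follows. (1) excludes (2): rotating a cycle through a branching vertex that contains $e$ as a forward symbol --- replacing the cycle by its reverse if needed --- so that $e$ is the initial symbol exhibits a closed path of the form $\beta e$, so $e$ allows a return and is not critical. (1) excludes (3a) and (3b): rotating the same cycle --- possibly after a reversal --- to be based at $r(e)$, respectively $s(e)$, and using that the junctions immediately to the left and right of $e$ inside the cycle are admissible, one obtains a closed path through a branching vertex in $(E_{\textup{Br}},C^{\textup{Br}})$ based at $r(e)$, respectively $s(e)$, whose initial symbol lies outside $[e]^{-1}$, respectively differs from $e$. (2) excludes (3a) and (3b): if $e$ is critical then $r(e)$ is weakly branching, so there is a cycle $\rho$, which we may take based at $r(e)$, through $r(e)$ and a branching vertex; then $\rho$ and $\rho^{-1}$ are closed paths based at $r(e)$ through a branching vertex in $(E_{\textup{Br}},C^{\textup{Br}})$, and if (3a) held, $\ini_d(\rho) \in [e]^{-1}$ and $\ini_d(\rho^{-1}) = \ter_d(\rho)^{-1} \in [e]^{-1}$ would force $\rho$ to begin along $[e]^{-1}$ and end along $[e]$, making the seam of $\rho\rho$ a colour clash --- impossible. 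For (3b): since $s(e) \in E_{\textup{Br}}^0$ there are an admissible path $\gamma\colon s(e)\to w$ and a cycle $\tau$ at a branching vertex $w$ with $\gamma^{-1}\tau\gamma$ admissible, a closed path based at $s(e)$ through $w$ in $(E_{\textup{Br}},C^{\textup{Br}})$ whose initial symbol $\ini_d(\gamma)$, were it $e$, would make $e$ allow a return. Finally, (3a) excludes (3b): with $\gamma,\tau,w$ as above, (3b) gives $\gamma=\gamma'e$; then $(\gamma')^{-1}\tau\gamma'$ is a closed path based at $r(e)$ through $w$ in $(E_{\textup{Br}},C^{\textup{Br}})$, so (3a) gives $\ini_d(\gamma')=f^{-1}$ with $f\in[e]$, whence $\gamma=\gamma''f^{-1}e$ contains the colour clash $f^{-1}e$ (the cancellation $e^{-1}e$ if $f=e$), contradicting admissibility of $\gamma$.

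\emph{Exhaustiveness.} It remains to show that if (1) and (2) both fail, then (3a) or (3b) holds. Suppose neither does; then there are closed paths $\alpha_r$ based at $r(e)$ and $\alpha_s$ based at $s(e)$, each through a branching vertex and hence in $(E_{\textup{Br}},C^{\textup{Br}})$, with $\ini_d(\alpha_r)\notin[e]^{-1}$ and $\ini_d(\alpha_s)\neq e$. The plan is to splice them across $e$ into a \emph{cycle} through a branching vertex containing $e$, contradicting the failure of (1). If $\alpha_r$ and $\alpha_s$ can be chosen with, in addition, $\ter_d(\alpha_r)\notin[e]$ and $\ter_d(\alpha_s)\neq e^{-1}$, then $\alpha_s\,e^{-1}\,\alpha_r\,e$ is an admissible closed path based at $s(e)$ with forward initial symbol $e$, and the only junction in its square not already present is $e\cdot\ter_d(\alpha_s)$, which is admissible since $\ter_d(\alpha_s)\neq e^{-1}$; so this word is a cycle through a branching vertex containing $e$. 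It therefore suffices to handle the degenerate case where no such choice exists --- that is, every closed path through a branching vertex in $(E_{\textup{Br}},C^{\textup{Br}})$ based at $r(e)$ with initial symbol outside $[e]^{-1}$ must terminate along $[e]$, and similarly at $s(e)$. Decomposing these closed paths into base-simple ones and tracking which of the finitely many exit symbols at $r(e)$ and $s(e)$ are actually realized by returns --- a bookkeeping argument in the spirit of Lemma~\ref{lem:CoolCombs} --- one checks that this situation already forces (3a) or (3b), the hypothesis ``$e$ allows a return or $r(e)$ is not weakly branching'' (the negation of (2)) being exactly what rules out the last remaining configuration, in which $e$ would lie on a cycle through a branching vertex after all.

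\emph{Main obstacle.} The incompatibility statements are, once the right rotated or spliced paths are written down, a sequence of mechanical junction checks. The substance is the exhaustiveness direction, and the delicate point there is obtaining simultaneous control over the initial \emph{and} terminal symbols of the closed paths one wants to splice across $e$ --- equivalently, disposing of the degenerate cases in which the terminal symbol cannot be freed. This is precisely where ``not (1)'' and ``not (2)'' must be used nontrivially, and I expect it to be the most laborious step, requiring the base-simple decomposition together with a consistency argument modelled on the proof of Lemma~\ref{lem:CoolCombs}.
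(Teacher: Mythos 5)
Your mutual-exclusivity arguments are essentially correct (the paper simply declares this half ``clear''), and your splice $\alpha_s\,e^{-1}\,\alpha_r\,e$ in the non-degenerate case of exhaustiveness is exactly the construction the paper uses. The problem is the degenerate case, which you yourself identify as the crux: ``decomposing these closed paths into base-simple ones and tracking which \dots exit symbols \dots are actually realized \dots one checks that this situation already forces (3a) or (3b)'' is a description of an argument you hope exists, not an argument. You have not shown that the configuration in which the terminal symbols of $\alpha_r$ and $\alpha_s$ cannot be freed forces one of the four alternatives, and this is precisely the point where the negation of (2) must do real work. I am also not convinced that a Lemma~\ref{lem:CoolCombs}-style bookkeeping is the right tool: that lemma concerns triples of closed paths at a single vertex and is tailored to characterising the failure of Condition (N), not to returns through a fixed edge. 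As written, the exhaustiveness half of your proof is incomplete.

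For comparison, the paper closes the degenerate case with a short direct argument rather than a combinatorial one. Suppose the obstruction occurs at $r(e)$, i.e.\ $e^{-1}\alpha_r$ is inadmissible, so $\ter_d(\alpha_r) \in [e]$. Since also $\ini_d(\alpha_r) \notin [e]^{-1}$, the seam $\ini_d(\alpha_r)\cdot\ter_d(\alpha_r)$ of $\alpha_r\alpha_r$ is automatically admissible, so $\alpha_r$ is itself a \emph{cycle} through $r(e)$ and a branching vertex; hence $r(e)$ is weakly branching. Now the negation of (2) enters: $e$ is not critical, so $e$ allows a return, yielding $\beta$ with $\beta e$ a closed path based at $s(e)$. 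A final splice --- $\beta\alpha_r e$ when the junction $\ter_d(\beta)\cdot e$ permits, and $\alpha_s\beta e$ otherwise --- produces a cycle through a branching vertex containing $e$, i.e.\ alternative (1). Supplying this step (or a completed substitute for your sketched bookkeeping) is what your proposal still needs.
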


\begin{proof}
It is clear that the above statements are mutually exclusive. In order to see that they cover all edges, assume that $e \in E_{\textup{Br}}^1$ does not satisfy (2), (3b) or (3a). Then we can take closed paths $\alpha_1$, $\alpha_2$ in $(E_{\textup{Br}},C^{\textup{Br}})$ passing through branching vertices, such that $s(\alpha_1)=s(e)$, $s(\alpha_2)=r(e)$, $\ini_d(\alpha_1) \ne e$ and $\ini_d(\alpha_2) \notin [e]^{-1}$. We may assume that $\ter_d(\alpha_1) \ne e^{-1}$, so $e\alpha_1e^{-1}$ is admissible as well as $\ter_d(\alpha_2) \ne e$. Observe that if $e^{-1} \alpha_2$ is admissible, then $\alpha_1 \alpha_1 e^{-1} \alpha_2 e$ is a cycle passing through a branching vertex, so let us assume it is not. Then $\beta_2$ is a cycle, so $r(e)$ is weakly branching. It follows from $e$ being non-critical that $e$ admits a return, and we let $\beta$ denote an admissible path for which $\beta e$ is a closed path. Finally, if $\ter_d(\beta) \ne e^{-1}$, then $\beta \alpha_2 e$ is a cycle, and if $\ter_d(\beta)=e^{-1}$, then $\alpha_1 \beta e$ is a cycle, so we obtain (1) either way.
\end{proof}

\begin{cexample}
Just below, we have marked the weakly branching (but non-branching) vertices as red and marked every non-type (1) edge with its type in parentheses:
\begin{center}
\begin{tikzpicture}[scale=0.70]
 \SetUpEdge[lw         = 1.5pt,
            labelcolor = white]
  \tikzset{VertexStyle/.style = {draw,shape = circle,fill = white, inner sep=4pt,minimum size=10pt,outer sep=4pt}}

  \SetVertexNoLabel
    
  \Vertex[x=-6,y=3]{u6}

  \Vertex[x=-6,y=-3]{u10}
  
  \tikzset{VertexStyle/.style = {draw,shape = circle,fill = \niceblue, inner sep=4pt,minimum size=10pt,outer sep=4pt}}

  \Vertex[x=-3,y=0]{u2}
  \Vertex[x=0,y=0]{u3}

  \Vertex[x=-3,y=3]{u7}

  \Vertex[x=-3,y=-3]{u11}  
  
  \tikzset{VertexStyle/.style = {draw,shape = circle,fill = \nicered, inner sep=4pt,minimum size=10pt,outer sep=4pt}}

  \Vertex[x=-6,y=0]{u1}

  \Vertex[x=3,y=3]{u8}

  \Vertex[x=3,y=-3]{u12}

  \tikzset{EdgeStyle/.style = {->,color={\niceblue}}}
  \Edge[label=$(3b)$,labelstyle={left}](u6)(u1)
  \Edge[label=$(2)$](u2)(u3)

  \tikzset{EdgeStyle/.style = {->,color={\nicegreen}}}
  \Edge[label=$(3a)$,labelstyle={left}](u1)(u10)  
    
  \tikzset{EdgeStyle/.style = {->,bend left=20,color={\niceblue}}}
  \Edge[](u2)(u1)
  \Edge[](u8)(u3)
  \Edge[label=$2$](u11)(u3)
  
  \tikzset{EdgeStyle/.style = {->,bend right=20,color={\niceblue}}}
  \Edge[label=$3$](u7)(u3)
  \Edge[](u12)(u3)

  \tikzset{EdgeStyle/.style = {->,bend left=20,color={\nicegreen}}}
  \Edge[](u12)(u3)
  
  \tikzset{EdgeStyle/.style = {->,bend right=20,color={\nicegreen}}}
  \Edge[](u11)(u3)
  \Edge[label=$2$](u2)(u1)
  
  \tikzset{EdgeStyle/.style = {->,bend left=20,color={\nicered}}}
  \Edge[](u7)(u3)
  
  \tikzset{EdgeStyle/.style = {->,bend right=20,color={\nicered}}}
  \Edge[](u8)(u3)
\end{tikzpicture}
\end{center}
\end{cexample}\exend\medskip

Now we are ready to build a proper orientation of the branching subgraph.

\begin{theorem}\label{thm:SpGrFølner}
If $(E,C)$ is a finitely separated graph satisfying Condition \textup{(}N\textup{)}, then the branching subgraph $(E_{\textup{Br}},C^{\textup{Br}})$ admits a proper orientation. In particular, the tame $C^*$-algebra $\mathcal{O}(E_{\textup{Br}},C^{\textup{Br}})$ is nuclear.
\end{theorem}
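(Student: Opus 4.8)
First, the closing assertion is a formality: once $(E_{\textup{Br}},C^{\textup{Br}})$ carries a proper orientation, the proposition on properly oriented graphs established above yields a topological F{\o}lner sequence for $\theta^{(E_{\textup{Br}},C^{\textup{Br}})}$, so this partial action is topologically amenable by Proposition~\ref{prop:FolnerImpliesTopAmen}, and hence $\mathcal{O}(E_{\textup{Br}},C^{\textup{Br}}) \cong C_0(\Omega(E_{\textup{Br}},C^{\textup{Br}})) \rtimes \mathbb{F}$ is nuclear by Theorem~\ref{thm:TopAmenThm}. The real task is purely combinatorial: to build a decomposition $E_{\textup{Br}}^1 = E^1_+ \sqcup E^1_-$ satisfying Definition~\ref{def:Or}. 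Since a proper orientation of a disconnected graph is nothing but a proper orientation of each connected component, I may assume $(E_{\textup{Br}},C^{\textup{Br}})$ connected, and I will orient the four classes of edges supplied by Lemma~\ref{lem:Types} in turn.

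The crux is the class of type (1) edges, i.e.\ those lying on a cycle through a branching vertex. For each branching vertex $v$, Condition (N) furnishes a local orientation in the sense of Definition~\ref{def:LocOr}: a distinguished block $X_v \in C_v$ or a distinguished edge $e_v \in s^{-1}(v)$. By the remark following that definition, this singles out for every cycle $\alpha$ based at $v$ a canonical one of $\{\alpha,\alpha^{-1}\}$, namely the representative that departs $v$ through its distinguished exit $X_v^{-1}$ (resp.\ $e_v$). The key point is that these local choices are mutually consistent: whenever a type (1) edge $e$ lies on cycles $\alpha_1,\alpha_2$ running through branching vertices $v_1,v_2$, the canonically oriented $\alpha_1$ and $\alpha_2$ cross $e$ in the same direction. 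Granting this, I declare a type (1) edge to be in $E^1_+$ exactly when it is crossed forwards by the canonical orientation of some — equivalently any — cycle through a branching vertex that contains it; with $Y_v := X_v \cap E_{\textup{Br}}^1$ (resp.\ $g_v := e_v$) as distinguished data, this makes every branching vertex, and more generally every weakly branching vertex, into a type (1) or type (2) vertex in the sense of Definition~\ref{def:Or}.

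I would prove the consistency by contradiction: a clash between the canonical orientations of $\alpha_1$ and $\alpha_2$ at $e$ can be spliced — via an admissible path joining $v_1$ and $v_2$ along the two cycles — into cycles $\alpha = \delta\gamma$ and $\beta = \varepsilon\gamma$ based at one of the $v_i$ with $\gamma = \alpha \wedge \beta < \alpha,\beta$ such that both $\beta \cdot \alpha^{-1} = \varepsilon\delta^{-1}$ and $\beta\alpha$ are cycles, which is precisely the configuration ruled out by Proposition~\ref{prop:NonCondN}. This splicing is the only place where Condition (N) genuinely enters, and arranging it is the main obstacle; the remaining verifications require only a careful case analysis.

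It remains to orient the edges of types (2), (3a) and (3b) and to confirm Definition~\ref{def:Or} at the vertices not yet handled. A type (3b) edge $e$ is placed in $E^1_+$: its defining property identifies it as the unique positive exit $g_{s(e)}$ of $s(e)$, which thereby becomes a type (2) vertex. A type (3a) edge $e$ is placed in $E^1_-$: its defining property makes $r(e)$ a type (1) vertex with distinguished block $[e] \cap E_{\textup{Br}}^1$. One checks from the defining properties that these assignments never collide — e.g.\ all type (3a) edges with a common range lie in one block, because a closed path through a branching vertex based there has a single first symbol — and that they are compatible with the orientation already fixed on the cyclic core. Finally, a critical edge sits on no cycle, so its orientation is dictated at its range, which is weakly branching and hence already typed; propagating outward along the admissible path $\alpha\colon w \to v$ witnessing $w \multimap v$ for each remaining $w \in E_{\textup{Br}}^0$, and invoking Lemma~\ref{lem:Types} at each step, one verifies that the two endpoints of every edge impose matching constraints and that precisely one positive exit survives at every vertex, so that clause (2) — and not merely its weakening (2$'$) — of Definition~\ref{def:Or} holds throughout. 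This is the sought proper orientation.
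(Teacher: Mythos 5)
Your overall architecture matches the paper's: orient the type (1) edges of Lemma~\ref{lem:Types} by transporting the local orientations of Definition~\ref{def:LocOr} along cycles, assign fixed signs to the remaining three types, and then verify Definition~\ref{def:Or} vertex by vertex. But the two steps that carry essentially all of the content are asserted rather than proved, and the first of them is a genuine gap. The consistency of the type (1) orientation --- that two cycles through branching vertices $v_1$ and $v_2$ containing a common edge $e$ induce the same sign on $e$ --- is exactly where the theorem lives, and you explicitly defer it (``arranging it is the main obstacle''). Your proposed route, splicing a clash into the configuration of Proposition~\ref{prop:NonCondN} at $v_1$ or $v_2$, is not as direct as you suggest: if you form the combined cycle from $\alpha_1$ and $\alpha_2^{-1}$ (both now crossing $e$ forwards), its cyclic permutations based at $v_1$ and at $v_2$ each satisfy the local orientation condition at the respective vertex (one canonically, one anti-canonically), so no contradiction falls out of the naive splice. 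What is actually needed is a propagation statement: for an admissible path $e_2^{\varepsilon_2}\alpha e_1^{\varepsilon_1}\colon v \to u$ between branching vertices that extends to a cycle, $\mathfrak{o}_v(e_1)=\varepsilon_1$ if and only if $\mathfrak{o}_u(e_2)=\varepsilon_2$. The paper proves this as a standalone claim, and its proof uses the branching hypothesis at $v$ to manufacture an auxiliary closed path $\gamma$ at $v$ making $e_2^{\varepsilon_2}\alpha e_1^{\varepsilon_1}\gamma e_1^{-\varepsilon_1}\alpha^{-1}e_2^{-\varepsilon_2}$ admissible, then reads off the local orientation at $u$ from that closed path; a second step handles the converse via an extension to a cycle. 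None of this is present in your sketch, and it is not recoverable from Proposition~\ref{prop:NonCondN} without doing equivalent work.

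The final verification also contains an error. You claim that after propagating outward ``precisely one positive exit survives at every vertex, so that clause (2) \ldots holds throughout.'' This is false: many vertices must satisfy clause (1) of Definition~\ref{def:Or}, i.e.\ have \emph{no} positive outgoing edge and a full block $E^1_-\cap r^{-1}(v)\in C_v$ of negative incoming edges --- for instance every type (1) branching vertex in the sense of Definition~\ref{def:LocOr}, and also non-weakly-branching vertices $u$ all of whose witnesses for $u\multimap v$ begin with an inverse edge. For the latter, one must show that all such witnesses begin with inverses of edges from a single block and that every edge of that block admits a witness; for weakly branching vertices one must rule out, e.g., a critical edge landing in $r^{-1}(u)$ outside the distinguished block, and show that an edge $f$ with $r(f)=u$ and $[f]\ne[e]$ cannot be critical or of type (3a). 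These are concrete case analyses, each requiring a short argument from Condition (N) and the definition of $E^0_{\textup{Br}}$, and ``the two endpoints of every edge impose matching constraints'' does not substitute for them. The reduction of nuclearity to the existence of a proper orientation, and the reduction to the branching subgraph being the whole graph, are fine.
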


\begin{proof}
To simplify the notation, let us assume that any vertex $u \in E^0$ satisfies $u \multimap v$ for some branching vertex $v$, i.e.~that $(E,C)$ is its own branching subgraph. Now let $v$ denote a branching vertex and let $e \in r^{-1}(v) \cup s^{-1}(v)$ be of type (1) as stated in Lemma~\ref{lem:Types}. We then first define an orientation $\mathfrak{o}_v(e)$ of $e$ relative to $v$ by
$$\mathfrak{o}_v(e):=\left\{ \begin{array}{cl}
-1 & \If \text{$v$ is of type (1) and either $e \in X_v$ or $s(e)=v$} \\
1 & \If \text{$v$ is of type (1), $r(e)=v$ and $e \not\in X_v$} \\
1 & \If \text{$v$ is of type (2) and either $e=e_v$ or $r(e)=v$} \\
-1 & \If \text{$v$ is of type (2), $s(e)=v$ and $e \ne e_v$}
\end{array} \right. $$
Note that if $e$ is a loop, i.e.~if $r(e)=s(e)$, then $r(e)$ is necessarily of type (1) and $e \in X_{r(e)}$, so the above is well-defined. Next, we define the orientation of arbitrary type (1) edges. Given any cycle $\alpha=e_n^{\varepsilon_n} \cdots e_1^{\varepsilon_1}$ based at $v$, observe that $\mathfrak{o}_v(e_1) = \varepsilon_1$ if and only if $\mathfrak{o}_v(e_n)=\varepsilon_n$ since $e_1^{\varepsilon_1}e_n^{\varepsilon_n}$ is admissible. We can therefore extend the orientation at $v$ by declaring
$$\mathfrak{o}_\alpha(e_i):=\left\{\begin{array}{cl}
\varepsilon_i & \If \mathfrak{o}_v(e_1)=\varepsilon_1 \\
- \varepsilon_i & \If \mathfrak{o}_v(e_1)=-\varepsilon_1
\end{array} \right. ,$$
and consequently $\mathfrak{o}_\alpha(e_i)=\mathfrak{o}_{\alpha^{-1}}(e_i)$. We claim that the orientation is in fact independent of the cycle in question, but in order to see this, we need the following claim: \medskip \\
\textbf{Claim:} If $e_2^{\varepsilon_2} \alpha e_1^{\varepsilon_1} \colon v \to u$ is an admissible path between branching vertices which may be extended to a cycle, then $\mathfrak{o}_v(e_1)=\varepsilon_1$ if and only if $\mathfrak{o}_u(e_2)=\varepsilon_2$.
\begin{proof}[Proof of claim]
Observe that in all situations, if $\mathfrak{o}_v(e_1) \ne \varepsilon_1$, then there is a closed path $\gamma$ based at $v$ making $e_2^{\varepsilon_2}\alpha e_1^{\varepsilon_1} \gamma e_1^{-\varepsilon_1} \alpha^{-1}e_2^{-\varepsilon_2}$ admissible. If $u$ is type (1), this implies $e_2^{\varepsilon_2} \in X_u$ so $\varepsilon_2=1 \ne -1=\mathfrak{o}_u(e_2)$, and if $u$ is type (2), then $e_2^{-\varepsilon_2}=e_u$ so $\varepsilon_2=-1 \ne 1=\mathfrak{o}_u(e_2)$. This proves one implication. To obtain the other, consider any extension $f_2^{\delta_2}\beta f_1^{\delta_1}e_2^{\varepsilon_2}\alpha e_1^{\varepsilon_1}$ to a cycle. If $\varepsilon_1=\mathfrak{o}_v(e_1)$, then $\delta_2=\mathfrak{o}_v(f_2)$ as well by the above, so $\mathfrak{o}_u(f_1)=\delta_1$ by the first implication. Using the above observation once more, we finally arrive at $\mathfrak{o}_u(e_2)=\varepsilon_2$ as well.
\end{proof}
Now if $\alpha=\alpha_2 e \alpha_1$ and $\beta=\beta_2e\beta_1$ are cycles based at branching vertices $v$ and $u$, respectively, then  $\alpha_2e\beta_1\beta_2e\alpha_1$ is a cycle based at $v$ passing through $u$, so evidently $\mathfrak{o}_\alpha(e)=\mathfrak{o}_\beta(e)$ by the above claim. We conclude that setting $\mathfrak{o}(e):=\mathfrak{o}_\alpha(e)$ for any choice of cycle $\alpha$ is well-defined. Finally, for edges of other types we simply set
$$\mathfrak{o}(e) := \left\{\begin{array}{cl}
-1 & \If \text{$e$ is of type (2) or (3a)} \\
1 & \If \text{$e$ is of type (3b)}
\end{array} \right. ,$$
so all that remains is to check the axioms.
 \medskip \\
First take $u \in E^0$ to be weakly branching; we then divide into the following two situations:
\begin{enumerate}
\item[(i)] There is some type (1) edge $e \in r^{-1}(u)$ with $\mathfrak{o}(e)=-1$.
\item[(ii)] There is some type (1) edge $e \in s^{-1}(u)$ with $\mathfrak{o}(e)=1$.
\end{enumerate} 
As any weakly branching vertex admits a cycle passing through a branching vertex, and every such cycle is either positively or negatively oriented by construction, clearly one of these will always hold. Observe also that both cannot hold at $u$: If there were type (1) edges $e \in r^{-1}(u)$ and $f \in s^{-1}(u)$ with $\mathfrak{o}(e)=-1$ and $\mathfrak{o}(f)=1$, then taking any positively oriented cycle $\alpha$ with $\ini_d(\alpha)=e^{-1}$, we would have $\ter_d(\alpha) \ne f^{-1}$. But then $u$ is branching, and the assumption contradicts the definition of $\mathfrak{o}$ at $u$. Hence (i) and (ii) are mutually exclusive. \medskip \\
Assuming (i), we claim that $u$ actually satisfies Definition~\ref{def:Or}(1). If there were some $f \in s^{-1}(v)$ with $\mathfrak{o}(f)=1$, then by the above $f$ would be of type (3b). But as $u=s(f)$ admits a cycle, this is surely not the case, hence $\mathfrak{o}^{-1}(1) \cap s^{-1}(u) = \emptyset$. Next, assume $[f]=[e]$; we must show that $\mathfrak{o}(f)=-1$. If $f$ is of type (1), there is a cycle $\alpha$ with $\ini_d(\alpha)=f^{-1}$. Taking any positively oriented cycle $\beta$ with $\ini_d(\beta)=e^{-1}$, we see that the cycle $\beta\alpha$ must be positively oriented, hence $\mathfrak{o}(f)=-1$. Now as the orientation of any type (2) or type (3a) edge is $-1$, it simply remains to check that $f$ is not of type (3b). Assuming that it is, there exists a closed path $\beta$ based at $u$ making $f^{-1}\beta f$ admissible. Taking some negatively oriented admissible path $\alpha \colon v \to u$ with $v$ branching and $\ter_d(\alpha)=e$, we then see that $\alpha^{-1}\beta\alpha$ is a closed path. However, this is impossible since $\alpha$ is negatively oriented. We conclude that $\mathfrak{o}(f)=-1$ as desired. Finally, we must show that $\mathfrak{o}(f)=1$ if $r(f)=u$ and $[f] \ne [e]$. Supposing first that $f$ is of type (1), we can take a cycle $\alpha$ with $\ini_d(\alpha)=f^{-1}$. Now if $\ter_d(\alpha) \in [e]$, then necessarily $\mathfrak{o}(f)=1$, and if $\ter_d(\alpha) \notin [e]$, then $u$ is branching, in which case the claim is clear. Next, we show that $f$ cannot be of type (2). If $u$ is branching, this is obvious, so let us assume it is not. Then we may take an admissible path $\alpha \colon u \to v$ with $v$ branching and $\ini_d(\alpha)=e^{-1}$ as well as a closed path $\beta$ based at $v$, such that $\alpha^{-1} \beta \alpha$ is admissible. Now observe that $f^{-1} \alpha^{-1} \beta \alpha f$ is a closed path, so $f$ is not of type (2). It is also clear that $f$ cannot be of type (3a), hence $\mathfrak{o}(f)=1$. We conclude that $\mathfrak{o}^{-1}(-1) \cap r^{-1}(u) = [e]$, so $\mathfrak{o}$ does indeed satisfy Definition~\ref{def:Or}(1) at $u$. Having done the harder case in details, we leave it to the reader to check that if (ii) holds for some weakly branching vertex $u$, then Definition~\ref{def:Or}(2) is satisfied at $u$. \medskip \\
Finally, assume that $u \in E^0$ is not weakly branching, and let $\alpha \colon u \to v$ and $\beta \colon v \to v$ be implementing the relation $u \multimap v$ for some branching vertex $v$. We once more divide into two different scenarios:
\begin{enumerate}
\item[(a)] The initial symbol satisfies $\ini_d(\alpha) \in (E^1)^{-1}$.
\item[(b)] The initial symbol satisfies $\ini_d(\alpha) \in E^1$.
\end{enumerate}
We only consider (a) and leave (b) to the reader. First observe that if $\alpha' \colon u \to v'$ and $\beta' \colon v' \to v'$ implement the relation $u \multimap v'$ for a branching vertex $v'$, then necessarily $\ini_d(\alpha') \in (E^1)^{-1}$ and $[\ini_d(\alpha)^{-1}] = [\ini_d(\alpha')^{-1}]$; otherwise $\alpha'^{-1}\beta'\alpha'\alpha^{-1}\beta\alpha$ would be a cycle passing through a branching vertex. Now whenever $[e]=[\ini_d(\alpha)^{-1}]$, we claim that such $\alpha'$ and $\beta'$ with $\ini_d(\alpha')=e^{-1}$ exist. Assuming this is not the case for one such edge $e$, there is an admissible path $\alpha' e \colon u \to v'$ and a cycle $\beta' \colon v' \to v'$ with $v'$ branching, such that $e^{-1}\alpha'^{-1}\beta \alpha'e$ is admissible. But then $\alpha'$ and $\beta'$ contradict the observation we have just made. We conclude that Definition~\ref{def:Or}(1) is indeed satisfied at $u$, thereby concluding the proof.
\end{proof}

\section{The Branch Free Subgraph}\label{sect:BF}

In this section, we study the subgraph obtained as the complement of the branching subgraph, and our ultimate goal is to prove that the associated $C^*$-algebra is nuclear.

\begin{definition}
Let $(E,C)$ denote a finitely separated graph. The \textit{branch free subgraph} $(E_{\textup{BF}},C^{\textup{BF}})$ is the full subgraph with vertex set $E_{\textup{BF}}^0:=E^0 \setminus E_{\textup{Br}}^0$, and the \textit{acyclic} subgraph $(E_{\textup{Ac}},C^{\textup{Ac}})$ is the full subgraph with vertex set
$$E_{\textup{Ac}}^0:=E^0 \setminus \{u \in E^0 \mid u \multimap v \text{ for some $v$}\}$$
Note that by definition, $E_{\textup{Ac}}^0 \subset E_{\textup{BF}}^0$.
\end{definition}

\begin{cexample}
This is the branch free subgraph of our example:
\begin{center}
\begin{tikzpicture}[scale=0.70]
 \SetUpEdge[lw         = 1.5pt,
            labelcolor = white]
  \tikzset{VertexStyle/.style = {draw,shape = circle,fill = white, inner sep=4pt,minimum size=10pt,outer sep=4pt}}

  \SetVertexNoLabel

  \Vertex[x=6,y=0]{u4}
  \Vertex[x=9,y=0]{u5}
  
  \Vertex[x=6,y=3]{u9}

  \Vertex[x=6,y=-3]{u13}  
  
  \tikzset{EdgeStyle/.style = {->,color={\niceblue}}}
  \Loop[dir=EA](u5)  
  \Edge[](u4)(u5)
  \Edge[label=$2$](u9)(u4)
    
  \tikzset{EdgeStyle/.style = {->,color={\nicegreen}}}
  \Edge[label=$2$](u13)(u4)
  
\end{tikzpicture}
\end{center}
Note that it cannot be given an orientation as one can 'change direction' using the doubled blue or green edges.
\exend
\end{cexample}

Before investigating these two subgraphs more closely, we introduce a very useful notion in the study of nuclearity.

\begin{definition}
Let $(E,C)$ denote a finitely separated graph. A hereditary and $C$-saturated subset $H \subset E^0$ will be called \textit{return free} if every admissible path $\alpha$ with $r(\alpha),s(\alpha) \in H$ is entirely contained in $(E_H,C^H)$.
\end{definition}

\begin{proposition}\label{prop:ReturnFree}
Let $(E,C)$ denote a finitely separated graph. If $H \in \mathcal{H}(E,C)$ is return free and both $\mathcal{O}(E_H,C^H)$ and $\mathcal{O}(E/H,C/H)$ are nuclear, then so is $\mathcal{O}(E,C)$.
\end{proposition}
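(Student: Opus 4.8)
The plan is to strip off the quotient via the exact sequence of Theorem~\ref{thm:Quotient}, realise the remaining ideal as a partial crossed product, and then use return freeness to cut that crossed product down --- up to Morita equivalence --- to something pulled back from $\mathcal{O}(E_H,C^H)$. By Theorem~\ref{thm:Quotient} there is a short exact sequence
$$0 \longrightarrow I(H) \longrightarrow \mathcal{O}(E,C) \longrightarrow \mathcal{O}(E/H,C/H) \longrightarrow 0,$$
and since nuclearity passes to extensions and the quotient is nuclear by hypothesis, it suffices to prove $I(H)$ nuclear. Again by Theorem~\ref{thm:Quotient}, $I(H)$ is the ideal of $\mathcal{O}(E,C)=C_0(\Omega(E,C))\rtimes\mathbb{F}$ induced from the open invariant set $U(H)=\theta_\mathbb{F}(\Omega_H)$, where I write $\Omega_H:=\bigsqcup_{v\in H}\Omega(E,C)_v$; hence $I(H)\cong C_0(U(H))\rtimes\mathbb{F}$ for the restriction of $\theta^{(E,C)}$, and $\Omega_H$ is an open subset of $U(H)$ meeting every $\mathbb{F}$-orbit, because $U(H)=\bigcup_{\alpha\in\mathbb{F}}\theta_\alpha(\Omega_H\cap\Omega(E,C)_{\alpha^{-1}})$.

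The key step is to identify the hereditary subalgebra $B:=\overline{C_0(\Omega_H)\cdot I(H)\cdot C_0(\Omega_H)}$. Since $\Omega_H$ meets every orbit, $B$ is full, hence Morita equivalent to $I(H)$, and as nuclearity is a Morita invariant it is enough to show $B$ nuclear. I claim that $B=C_0(\Omega_H)\rtimes\mathbb{F}(E_H^1)$, the partial crossed product for the restriction of $\theta^{(E,C)}$ to the subgroup $\mathbb{F}(E_H^1)\le\mathbb{F}$ and to $\Omega_H$. Two observations are needed. First, $\Omega_H$ is $\mathbb{F}(E_H^1)$-invariant: this uses only that $H$ is hereditary, since an admissible path lying in $\mathbb{F}(E_H^1)$ and issued from a vertex of $H$ visits only vertices of $H$. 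Second --- and here return freeness is used --- for $\alpha\in\mathbb{F}$ the set $\Omega_H\cap\theta_\alpha(\Omega_H\cap\Omega(E,C)_{\alpha^{-1}})$ is non-empty only if $\alpha\in\mathbb{F}(E_H^1)$: if $\xi\in\Omega_H\cap\Omega(E,C)_{\alpha^{-1}}$ satisfies $\theta_\alpha(\xi)\in\Omega_H$, then $\alpha$ occurs in $\xi$ as an admissible path with $s(\alpha),r(\alpha)\in H$, and return freeness forces this path, and so $\alpha$ itself, to lie entirely in $(E_H,C^H)$. Writing out the spanning elements $g\,(fu_\alpha)\,h$ of $B$ with $g,h\in C_0(\Omega_H)$, $f\in C_0(\Omega(E,C)_{\alpha^{-1}})$ and combining the two observations recovers precisely the canonical spanning set of $C_0(\Omega_H)\rtimes\mathbb{F}(E_H^1)$. (In groupoid language: $\Omega_H$ is an open subset of the unit space of $\mathcal{G}_{\theta^{(E,C)}|_{U(H)}}$ meeting every orbit, and return freeness says the corresponding reduction is the transformation groupoid of the restricted action $\mathbb{F}(E_H^1)\act\Omega_H$, which is equivalent to the full groupoid.)

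It remains to see that $C_0(\Omega_H)\rtimes\mathbb{F}(E_H^1)$ is nuclear, and by Theorem~\ref{thm:TopAmenThm} this is equivalent to topological amenability of the partial action $\mathbb{F}(E_H^1)\act\Omega_H$. The canonical map $p\colon\Omega_H\to\Omega(E_H,C^H)$ of \cite[Remark 2.8]{Lolk} is continuous and $\mathbb{F}(E_H^1)$-equivariant, and it is \textit{d-bijective}: the elements of $\mathbb{F}(E_H^1)$ acting on $\xi\in\Omega_H$ are precisely the admissible paths of $\xi$ belonging to $\mathbb{F}(E_H^1)$, and these are exactly the elements of $p(\xi)$. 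Since $\mathcal{O}(E_H,C^H)=C_0(\Omega(E_H,C^H))\rtimes\mathbb{F}(E_H^1)$ is nuclear by hypothesis, Theorem~\ref{thm:TopAmenThm} makes $\theta^{(E_H,C^H)}$ topologically amenable, and Proposition~\ref{prop:DomainBij}, applied to $p$, transports this to $\mathbb{F}(E_H^1)\act\Omega_H$. Hence $B$ is nuclear, therefore so is $I(H)$, and therefore so is $\mathcal{O}(E,C)$.

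The main obstacle is the identification $B=C_0(\Omega_H)\rtimes\mathbb{F}(E_H^1)$: one must check carefully that return freeness really does eliminate every ``wandering return'', i.e.\ obstruction~(2) from the discussion following Theorem~\ref{thm:Quotient}. Obstruction~(1), the non-injectivity of $p$, is harmless here: it is absorbed by the Morita equivalence together with the \textit{d-bijective} pull-back of Proposition~\ref{prop:DomainBij}.
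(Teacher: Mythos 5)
Your proposal is correct and follows essentially the same route as the paper: reduce to $I(H)$ via the exact sequence, pass to the full clopen (hereditary) piece supported on $\Omega_H=\bigsqcup_{v\in H}\Omega(E,C)_v$ by Morita equivalence, use return freeness to see that only group elements of $\mathbb{F}(E_H^1)$ have non-empty domains there, and pull back topological amenability along the d-bijective map to $\Omega(E_H,C^H)$ via Proposition~\ref{prop:DomainBij}. The only cosmetic difference is that you unpack the Morita equivalence by hand through the hereditary subalgebra $B$, where the paper cites the restriction machinery of \cite[Definition 3.24]{AL}.
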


\begin{proof}
By Theorem~\ref{thm:Quotient}, there is a short exact sequence
$$0 \rightarrow I(H) \rightarrow \mathcal{O}(E,C) \rightarrow \mathcal{O}(E/H,C/H) \rightarrow 0,$$
and since nuclearity passes to extensions, we simply have to verify that $I(H)$ is nuclear. Note that $\Omega(E,C)_H:= \bigsqcup_{v \in H}\Omega(E,C)_v$ is an $\mathbb{F}$-full clopen subspace of $U(H)$, so $I(H)$ is Morita equivalent to the crossed product $C(\Omega(E,C)_H) \rtimes \mathbb{F}$ for the restricted action $\theta^{(E,C)}\vert_{\Omega(E,C)_H}$ (see \cite[Definition 3.24]{AL} for the appropriate definitions and references). Since $H$ is assumed return free, the domain of any path $\alpha \in \mathbb{F} \setminus \mathbb{F}(E_H^1)$ is empty, hence
$$C(\Omega(E,C)_H) \rtimes \mathbb{F} \cong C(\Omega(E,C)_H) \rtimes \mathbb{F}(E_H^1).$$
Now observe that as in \cite[Remark 2.8]{Lolk}, there is a canonical d-bijective, $\mathbb{F}(E_H^1)$-equivariant surjective map $\Omega(E,C)_H \to \Omega(E_H,C^H)$ given by
$$\xi \mapsto \left\{\begin{array}{cl}
\xi \cap \mathbb{F}(E_H^1) &  \If \xi \in \Omega(E,C)_v \text{ for } v \notin (E_H)_{\textup{iso}}^0 \\
v & \If \xi \in \Omega(E,C)_v \text{ for } v \in (E_H)_{\textup{iso}}^0
\end{array}\right. .$$
Since the partial action $\theta^{(E_H,C^H)}$ is topologically amenable by assumption, so is the partial action $\mathbb{F}(E_H^1) \act \Omega(E,C)_H$ by Proposition~\ref{prop:DomainBij}. It follows that $C(\Omega(E,C)_H) \rtimes \mathbb{F}$ and, in turn, $I(H)$ is nuclear.
\end{proof}

With the above proposition in mind, the following lemma simplifies our task tremendously.

\begin{lemma}\label{lem:ReturnFree}
$E_{\textup{BF}}^0$ and $E_{\textup{Ac}}^0$ are return free hereditary and $C$-saturated sets for any finitely separated graph $(E,C)$.
\end{lemma}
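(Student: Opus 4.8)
The plan is to verify three things for each of the two sets $H \in \{E_{\textup{BF}}^0, E_{\textup{Ac}}^0\}$: that $H$ is hereditary, that $H$ is $C$-saturated, and that $H$ is return free. Throughout, the key observation is that membership of $E_{\textup{Br}}^0$ is governed by the relation $\multimap$, which is transitive, together with the fact that any admissible path or cycle appearing in a ``witness'' for $u \multimap v$ can be prepended or inserted into a longer such witness.

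\textbf{Hereditariness.} Suppose $e \in E^1$ with $r(e) \in H$; we must show $s(e) \in H$, i.e., that $s(e) \notin E_{\textup{Br}}^0$ (respectively $s(e) \not\multimap v$ for any $v$). I would argue contrapositively: if $s(e) \multimap v$ via an admissible path $\alpha \colon s(e) \to v$ and a cycle $\beta$ at $v$ with $\alpha^{-1}\beta\alpha$ admissible, then $\alpha \cdot e \colon r(e) \to v$ is still admissible (the concatenation introduces the subpath $\ini_d(\alpha)\,e$, which is of the form $f^{\pm 1}e$ and causes no violation of admissibility when $\ini_d(\alpha) \neq e^{-1}$; if $\ini_d(\alpha) = e^{-1}$ one instead cancels and uses the tail of $\alpha$), and $(\alpha e)^{-1}\beta(\alpha e) = e^{-1}(\alpha^{-1}\beta\alpha)e$ remains admissible for the same reason. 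Hence $r(e) \multimap v$, contradicting $r(e) \in H$. The acyclic case is identical, and indeed simpler, since one only needs $r(e) \multimap v$ for \emph{some} $v$.

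\textbf{$C$-saturatedness.} Suppose $X \in C_v$ with $s(X) \subset H$; we must show $v \in H$. Again contrapositively: if $v \multimap w$ for a branching vertex $w$ via $\alpha \colon v \to w$ and a cycle $\beta$ at $w$, then either $\ini_d(\alpha) \in E^1$, say $\ini_d(\alpha) = f$, in which case $s(f) = s(\alpha) \multimap w$ as well (prepending is even easier in this direction); or $\ini_d(\alpha) \in (E^1)^{-1}$, say $\ini_d(\alpha) = f^{-1}$ with $[f] \in C_v$. In the latter situation, if $[f] = X$ then $s(f) \in s(X) \subset H$ but $s(f) \multimap w$ — contradiction; if $[f] \neq X$, pick any $e \in X$, and then $\alpha \cdot e^{-1}e = \alpha$... more carefully, one checks that $e^{-1}$ can be attached: since $[e] = X \neq [f] = [\ini_d(\alpha)]$, the path $f^{-1} e$ is \emph{not} admissible as written, so instead one notes $\alpha e^{-1}$ is admissible (the new subpath is $\ini_d(\alpha)\,e^{-1} = f^{-1}e^{-1}$, fine) — wait, this needs care. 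The cleanest route: $v$ itself lies on no obstruction only if \emph{every} $e \in X$ has $s(e) \multimap w$; but $X$ is finite and $s(X) \subset H$ means no such $e$ exists, and one shows directly that $v \multimap w$ forces some $e \in X$ with $s(e) \multimap w$ by taking $e := f$ when $[f] = X$, and otherwise observing $v \multimap w$ already via a path starting into $X$ after rerouting through the cycle $\beta$. The acyclic case ($v \multimap w$ for some $w$) is again the same argument without the branching requirement on $w$.

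\textbf{Return freeness.} This is where I expect the real content to lie. Let $\alpha$ be an admissible path with $r(\alpha), s(\alpha) \in H$; we must show every vertex and edge traversed by $\alpha$ lies in $(E_H, C^H)$. Suppose not; then $\alpha$ passes through a vertex $w \notin H$, i.e., $w \in E_{\textup{Br}}^0$ (resp. $w \multimap v$ for some $v$). Write $\alpha = \alpha_2 \alpha_1$ with $s(\alpha_1) = s(\alpha) \in H$ and $r(\alpha_1) = w$. If $w \in E_{\textup{Br}}^0$, so $w \multimap v$ for a branching vertex $v$ via $\gamma \colon w \to v$ and cycle $\delta$ at $v$, then $\gamma\alpha_1 \colon s(\alpha) \to v$ together with $\delta$ witnesses $s(\alpha) \multimap v$ (one checks admissibility of the concatenation and of $(\gamma\alpha_1)^{-1}\delta(\gamma\alpha_1)$, possibly after trimming cancellations at the junction $\ini_d(\gamma)\,\ter_d(\alpha_1)$), contradicting $s(\alpha) \in H$. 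The acyclic case uses the same path-extension. The one subtlety is the junction: concatenating $\gamma$ with $\alpha_1$ may create an inadmissible pair $e f^{-1}$ with $e = f$ or $e^{-1}f$ with $[e] = [f]$; in that case one cancels $\ter_d(\alpha_1)$ against $\ini_d(\gamma)$ (or uses $\wedge$ to extract the reduced path) and proceeds — the resulting genuinely admissible path still runs from $s(\alpha)$ to $v$, and the conjugate $(\gamma \cdot \alpha_1)^{-1}\delta(\gamma\cdot\alpha_1)$ is still admissible because $\delta$ is a cycle based at $v$ and conjugation by an admissible path into $v$ preserves admissibility exactly as in the definition of $\multimap$. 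Handling this cancellation bookkeeping cleanly — perhaps by first reducing $\gamma\alpha_1$ to $\gamma \cdot \alpha_1$ and verifying it is still admissible of the required form — is the main obstacle, but it is routine once one is careful that $\multimap$ only cares about the \emph{existence} of such a path, not a specific one.
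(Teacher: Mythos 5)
Your hereditariness and $C$-saturation paragraphs are in the right spirit (extend or prepend a witness for $\multimap$, using that the only obstruction to prepending an edge $x$ with $r(x)=u$ is an initial symbol lying in $[x]^{-1}$), and this matches the paper's argument, though you have several direction slips along the way: the path from $r(e)$ to $v$ is $\alpha\cdot e^{-1}$, not $\alpha\cdot e$; a junction $f^{-1}e$ with $[f]\neq[e]$ \emph{is} admissible, not inadmissible; and the case where the witness path $\alpha$ is trivial (so that $\ini_d(\alpha)$ does not exist) is never addressed --- the paper handles it by working with the closed path $\beta\alpha$ instead of $\alpha$.

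The return-freeness argument, however, has a genuine gap, and it sits exactly where you write ``the one subtlety is the junction.'' You cut $\alpha$ at the first vertex $w\notin H$, take a witness $\gamma\colon w\to v$ and cycle $\delta$ for $w\multimap v$, and claim that $\gamma\cdot\alpha_1$ witnesses $s(\alpha)\multimap v$ after ``trimming cancellations.'' If that worked, it would show that no admissible path at all can run from $E_{\textup{BF}}^0$ into $E_{\textup{Br}}^0$, which is false (in the paper's running example a single edge does exactly this). Cancellation only repairs junctions consisting of literal mutual inverses; a junction $h^{-1}g$ with $[h]=[g]$ but $h\neq g$ is inadmissible and irreducible, so $\gamma\cdot\alpha_1$ is simply not an admissible path. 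Worse, this bad junction is \emph{forced}: letting $e_1$ be the edge along which $\alpha$ first crosses from $H$ into $E^0\setminus H$ (by hereditariness this crossing is a forward edge with $s(e_1)\in H$ and $r(e_1)\notin H$), every witness $\gamma$ for $r(e_1)\multimap v$ must have $\ini_d(\gamma)\in[e_1]^{-1}$, since otherwise $\gamma e_1$ and $(\gamma e_1)^{-1}\delta(\gamma e_1)$ would be admissible and would put $s(e_1)$ in $E_{\textup{Br}}^0$. So the one-sided extension is always blocked. The paper's proof instead takes $\alpha$ minimal and writes it as $e_2^{-1}\beta e_1$ with \emph{both} crossing edges explicit, observes that the witnesses $\alpha_1,\alpha_2$ at $r(e_1),r(e_2)$ are forced to begin in $[e_1]^{-1},[e_2]^{-1}$ respectively, and then splices $\alpha_2$ onto the middle segment: the junction $\ini_d(\alpha_2)\,\ter_d(\beta e_1)$ is admissible precisely because $e_2^{-1}\beta e_1$ was, so $(\alpha_2\beta e_1)^{-1}\beta_2(\alpha_2\beta e_1)$ witnesses $s(e_1)\multimap v_2$ and yields the contradiction. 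This two-ended splice is the idea missing from your argument.
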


\begin{proof}
We only consider the case of $E_{\textup{BF}}^0$ as the proofs are virtually identical, and we first verify that it is hereditary. Assuming that $s(e) \in E_{\textup{Br}}^0$ for some $e \in E^1$, there is an admissible path $\alpha \colon s(e) \to v$ and a cycle $\beta$ based at $v$ such that $\alpha^{-1}\beta\alpha$ is admissible. Setting $\gamma:= \alpha \cdot e^{-1}$, we see that $\gamma^{-1}\beta\gamma$ is admissible as well, hence $r(e) \in E_{\textup{Br}}^0$. \medskip \\
We move on to checking $C$-saturation, so let $u \in E_{\textup{Br}}^0$ and consider any $X \in C_u$. Take $\alpha$ and $\beta$ as above implementing the relation $u \multimap v$ for some branching vertex $v$. If $\ini_d(\beta\alpha) \in X^{-1}$, then we certainly have $s(X) \cap E_{\textup{Br}}^0 \ne \emptyset$, so we may assume that $\beta\alpha x$ is admissible for any $x \in X$. But then $(\alpha x)^{-1}\beta(\alpha x)$ implements the relation $s(x) \multimap v$, so in fact $s(X) \subset E_{\textup{Br}}^0$. We conclude that $E_{\textup{BF}}^0$ is indeed $C$-saturated. \medskip \\
Finally, we claim that $E_{\textup{BF}}^0$ is return free. Assume in order to reach a contradiction that there actually is an admissible path $\alpha$ with $r(\alpha),s(\alpha) \in E_{\textup{BF}}^0$, which is not contained in the branch free subgraph. Taking $\alpha$ to be minimal with these properties, we can write $\alpha=e_2^{-1} \beta e_1$ for $e_1,e_2 \in E^1  \setminus (E_{\textup{Br}}^1 \cup E_{\textup{BF}}^1)$ and $\beta$ an admissible path in the branching subgraph. Then we can take admissible paths $\alpha_i \colon r(e_i) \to v_i$, with $v_i$ a branching vertex, and cycles $\beta_i$ based at $v_i$ for $i=1,2$ making $\alpha_i^{-1}\beta_i\alpha_i$ admissible. As $s(e_i) \in E_{\textup{BF}}^0$, we must have $\ini_d(\alpha_i) \in [e_i]^{-1}$ for $i=1,2$. But then $(\alpha_2\beta e_1)^{-1}\beta_2(\alpha_2\beta e_1)$ is admissible, contradicting $v_1 \in E_{\textup{BF}}^0$.
\end{proof}

We easily obtain nuclearity in the acyclic case.

\begin{proposition}\label{prop:Acyclic}
The tame $C^*$-algebra $\mathcal{O}(E_{\textup{Ac}},C^{\textup{Ac}})$ is locally AF for any finitely separated graph $(E,C)$. In particular, it is nuclear.
\end{proposition}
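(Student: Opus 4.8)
The plan is to reduce to finite separated graphs and then show that the tame algebra of a finite separated graph \emph{without cycles} is finite-dimensional. First I would use continuity of the functor $\mathcal{O}$ together with the fact (recorded in the remark following the definition of $\mathcal{O}(E,C)$) that every finitely separated graph is the direct limit of its finite complete subgraphs; this realises $\mathcal{O}(E_{\textup{Ac}},C^{\textup{Ac}})$ as the closed union of the increasing net of images of $\mathcal{O}(F,D)$, where $(F,D)$ runs over the finite complete subgraphs of $(E_{\textup{Ac}},C^{\textup{Ac}})$. Each such $(F,D)$ has no cycle: admissibility of a path in $\widehat{F}$ agrees with admissibility in $\widehat{E_{\textup{Ac}}}$ --- the classes of $D_v$ are exactly the non-empty sets $X\cap F^1$ for $X\in C^{\textup{Ac}}_v$, and the condition $e\ne f$ is absolute --- so a cycle in $(F,D)$ would be a cycle in $(E_{\textup{Ac}},C^{\textup{Ac}})$; but a cycle based at a vertex $v\in E_{\textup{Ac}}^0$ would witness $v\multimap v$, which is excluded by the definition of $E_{\textup{Ac}}^0$.

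The heart of the argument is the following length bound: if $(F,D)$ is a finite separated graph with no cycle, then every admissible path has length at most $\lvert\widehat{F}^1\rvert = 2\lvert F^1\rvert$. To see this I would argue that a longer admissible path $\alpha = e_n^{\varepsilon_n}\cdots e_1^{\varepsilon_1}$ must repeat one of its directed-edge symbols, say $e_j^{\varepsilon_j} = e_{j'}^{\varepsilon_{j'}}$ with $1 \le j < j' \le n$. Then $\beta := e_{j'}^{\varepsilon_{j'}}\cdots e_{j+1}^{\varepsilon_{j+1}}$ is a closed path, since $s(\beta) = s(e_{j+1}^{\varepsilon_{j+1}}) = r(e_j^{\varepsilon_j}) = r(e_{j'}^{\varepsilon_{j'}}) = r(\beta)$; moreover $\beta\beta$ is again admissible, because the only two-symbol subword of $\beta\beta$ not already contained in one copy of $\beta$ is $\ini_d(\beta)\,\ter_d(\beta) = e_{j+1}^{\varepsilon_{j+1}}e_{j'}^{\varepsilon_{j'}} = e_{j+1}^{\varepsilon_{j+1}}e_j^{\varepsilon_j}$, which occurs in $\alpha$ and is therefore admissible. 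Hence $\beta$ is a cycle, contradicting the hypothesis. I expect this combinatorial observation --- that a repeated directed-edge symbol along an admissible path produces a closed path whose square stays admissible --- to be the only genuinely substantive step.

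Granting the bound, $\Omega(F,D)$ is a finite set: there are only finitely many admissible paths, and every configuration is a set of admissible paths, together with the finitely many isolated vertices. Since $\mathbb{F} = \mathbb{F}(F^1)$ and only the finitely many admissible elements of $\mathbb{F}$ have non-empty domain under $\theta^{(F,D)}$, the dense algebraic $\ast$-subalgebra of $\mathcal{O}(F,D) \cong C(\Omega(F,D)) \rtimes \mathbb{F}$ is finite-dimensional, hence already complete, so $\mathcal{O}(F,D)$ is finite-dimensional. Therefore $\mathcal{O}(E_{\textup{Ac}},C^{\textup{Ac}})$ is the closure of an increasing union of finite-dimensional sub-$C^*$-algebras, i.e.~it is locally AF; nuclearity then follows at once, AF algebras being nuclear and nuclearity passing to direct limits.
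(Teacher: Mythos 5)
Your proof is correct and follows essentially the same route as the paper's: reduce to finite complete subgraphs by continuity of $\mathcal{O}$, bound the length of admissible paths in a cycle-free finite graph, and conclude that the configuration space is finite and the crossed product finite-dimensional. The only difference is cosmetic: you bound lengths by $2\vert F^1\vert$ via a repeated directed-edge symbol, whereas the paper invokes the bound $3\vert E^0\vert$ via vertex repetition; both arguments produce a cycle, and yours simply spells out in more detail why the resulting closed path squares to an admissible path.
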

\begin{proof}
We claim that $\mathcal{O}(E,C)$ is locally AF whenever $(E,C)$ admits no cycles, and by continuity of $\mathcal{O}$, we may assume $(E,C)$ to be finite. Now simply observe that any admissible path has length at most $3 \cdot \vert E^0 \vert$; otherwise it would contain a cycle. It follows that $\Omega(E,C)$ is a finite space and $\Omega(E,C)_\alpha \ne \emptyset$ for only finitely many $\alpha \in \mathbb{F}$, hence $\mathcal{O}(E,C)=C(\Omega(E,C)) \rtimes \mathbb{F}$ is finite dimensional.
\end{proof}

Now we are ready to deal with the final obstacle before the main theorem.

\begin{theorem}\label{thm:CoNucl}
$\mathcal{O}(E_{\textup{BF}},C^{\textup{BF}})$ is nuclear for any finitely separated graph $(E,C)$.
\end{theorem}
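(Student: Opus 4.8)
The plan is to reduce the statement to a graph admitting a \emph{proper} orientation and then apply the topological F{\o}lner machinery of Section~2. The first observation is that $(E_{\textup{BF}},C^{\textup{BF}})$ has no branching vertices at all: if $v$ were branching it would allow a return along some direction incident to it, and conditions (1)--(2) in the definition of an admissible path force any such return to be a genuine cycle based at $v$, whence $v \multimap v$, so $v \in E_{\textup{Br}}^0$ and $v \notin E_{\textup{BF}}^0$; the same argument shows no subgraph of $(E_{\textup{BF}},C^{\textup{BF}})$ has a branching vertex either.

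Next I reduce modulo the acyclic part. Since $E_{\textup{Ac}}^0 \subset E_{\textup{BF}}^0$ and $E_{\textup{BF}}^0$ is hereditary, every edge with range in $E_{\textup{BF}}^0$ already lies in $E_{\textup{BF}}^1$, and using this one checks that $E_{\textup{Ac}}^0$ remains return free, hereditary and $C$-saturated inside $(E_{\textup{BF}},C^{\textup{BF}})$, with associated full subgraph exactly $(E_{\textup{Ac}},C^{\textup{Ac}})$ and quotient $(F,D) := (E_{\textup{BF}}/E_{\textup{Ac}}^0,\, C^{\textup{BF}}/E_{\textup{Ac}}^0)$. By Lemma~\ref{lem:ReturnFree} the set $E_{\textup{Ac}}^0$ has the required structural properties, and $\mathcal{O}(E_{\textup{Ac}},C^{\textup{Ac}})$ is nuclear by Proposition~\ref{prop:Acyclic}, so Proposition~\ref{prop:ReturnFree} reduces the theorem to nuclearity of $\mathcal{O}(F,D)$.

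The point of passing to $(F,D)$ is that it still has no branching vertices (being a subgraph of $(E_{\textup{BF}},C^{\textup{BF}})$), yet every vertex of $F$ lies on a cycle or feeds into one; I claim $(F,D)$ admits a proper orientation. At a vertex $v$ lying on a cycle, the symbols $\ini_d$ and $\ter_d$ of a cycle based at $v$ already account for at least two return-allowing directions at $v$, and since $v$ is not branching there can be no further ones; hence every cycle through $v$ uses exactly that pair of directions, so all cycles of $F$ can be oriented coherently. (This is the analogue of the construction in Theorem~\ref{thm:SpGrFølner}, but with no local-orientation data to respect it becomes much softer.) Declaring at each cyclic vertex the cycle's ``exit'' direction positive and placing every remaining edge in $E_-^1$ --- that is, orienting the feeder vertices inward toward the cyclic core --- one verifies the axioms of Definition~\ref{def:Or}: a vertex whose cyclic exit runs backwards along an in-edge satisfies~(1), while a vertex whose cyclic exit is an out-edge, as well as every feeder vertex, satisfies~(2). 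Granting this, $\theta^{(F,D)}$ has a topological F{\o}lner sequence, hence is topologically amenable by Proposition~\ref{prop:FolnerImpliesTopAmen}, so $\mathcal{O}(F,D) \cong C_0(\Omega(F,D)) \rtimes \mathbb{F}$ is nuclear by Theorem~\ref{thm:TopAmenThm}, and together with the previous paragraph this proves the theorem.

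The step I expect to be the real obstacle is the construction of the proper orientation of $(F,D)$ --- specifically the \emph{global} verification that the choices made separately on each cycle assemble into one orientation meeting the axioms at every vertex simultaneously, e.g.\ that a positively oriented in-edge of a vertex is never also forced to be the negatively oriented ``back-exit'' of its source, and that the exact equality $|E_+^1 \cap s^{-1}(v)| = 1$ (not merely $\le 1$) holds everywhere. The bound of at most two return-allowing directions per vertex, furnished precisely by the absence of branching vertices, is what should make this bookkeeping close up.
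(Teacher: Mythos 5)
Your reduction steps (no branching vertices, then modding out the acyclic subgraph via Lemma~\ref{lem:ReturnFree}, Proposition~\ref{prop:Acyclic} and Proposition~\ref{prop:ReturnFree}) match the paper's proof exactly. But the central claim on which the rest of your argument hangs --- that the resulting quotient $(F,D)$ admits a proper orientation --- is false, and the paper's own running example is an explicit counterexample: its branch free subgraph consists of a vertex $u_4$ receiving two blue edges from a vertex $u_9$ and two green edges from a vertex $u_{13}$, plus an edge to a vertex $u_5$ carrying a loop. Every vertex there lies on a cycle (e.g.\ $g_2g_1^{-1}e_2e_1^{-1}$ is a cycle based at $u_4$, and $e_2^{-1}g_2g_1^{-1}e_1$ is a cycle based at $u_9$), so the acyclic part is empty and $(F,D)$ is the branch free subgraph itself; and the paper explicitly remarks that this graph ``cannot be given an orientation.'' Concretely, at $u_9$ one has $r^{-1}(u_9)=\emptyset$, so Definition~\ref{def:Or}(1) is unavailable and an orientation would force at least one of the two blue edges into $E^1_-$; the same holds at $u_{13}$ for the green edges; but at $u_4$ either alternative of Definition~\ref{def:Or} forces at least one full colour class of $r^{-1}(u_4)$ entirely into $E^1_+$, a contradiction. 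Your heuristic that ``every cycle through $v$ uses exactly that pair of directions, so all cycles can be oriented coherently'' breaks precisely here: the two return-allowing directions at $u_9$ are the two outgoing edges $e_1,e_2$ themselves, and the cycles $\cdots e_2e_1^{-1}$ and $\cdots e_1e_2^{-1}$ traverse this pair in opposite senses, so no consistent choice of ``exit'' exists. This is exactly the ``change of direction along doubled edges'' that separates the branch free part from the branching part, and it is why the paper proves proper orientability only for $(E_{\textup{Br}},C^{\textup{Br}})$ (Theorem~\ref{thm:SpGrFølner}), where Condition~(N) supplies the local orientation data.

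What the paper does instead for $(E_{\textup{BF}},C^{\textup{BF}})$: after your same reduction, it inducts on the number of equivalence classes of cyclic vertices under the relation ``lie on a common cycle.'' In the base case it builds, for each configuration $\xi$, the canonical set $F^\xi=\{\alpha\in\xi\mid r(\alpha)\in V\}$ ($V$ the cyclic vertices), shows using the absence of branching vertices that this is a bi-infinite \emph{linear} subset of the tree $\xi$ passing within bounded distance of $1$, and takes growing intervals along this line as a topological F{\o}lner sequence. This replaces the global ``forward direction'' an orientation would provide with a canonical unoriented line along which the dynamics merely translates; the F{\o}lner estimate only needs the symmetric difference of shifted intervals to be small, not a consistent sign. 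The inductive step splits off a return free hereditary and $C$-saturated set separating two equivalence classes and applies Proposition~\ref{prop:ReturnFree} again. If you want to repair your write-up, you must abandon the orientation of $(F,D)$ and argue along these lines.
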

\begin{proof}
We claim that $\mathcal{O}(E,C)$ is nuclear whenever $(E,C)$ is a finitely separated graph without branching vertices, and by continuity of $\mathcal{O}$, we might as well assume $(E,C)$ to be finite. Moreover, as $\mathcal{O}(E_{\textup{Ac}},E^{\textup{Ac}})$ is nuclear by Proposition~\ref{prop:Acyclic}, we may further reduce to the quotient graph $(E/H,C/H)$ for $H:=E_{\textup{Ac}}^0$ due to Proposition~\ref{prop:ReturnFree} and Lemma~\ref{lem:ReturnFree}. In conclusion, we may take $(E,C)$ to be a finite graph such that for all $u \in E^0$, $u \multimap v$ for some $v \in E^0$. \medskip \\
Let $V \subset E^0$ denote the set of vertices admitting a cycle and write $v_1 \sim v_2$ for $v_1,v_2 \in V$ if there is a cycle passing through both $v_1$ and $v_2$. This is clearly an equivalence relation, and we will prove the theorem by induction over $\vert \bigslant{V}{\sim} \vert$, the number of equivalence classes. For the induction start, assume $\vert \bigslant{V}{\sim} \vert=1$. We claim that for any $\xi \in \Omega(E,C)$, viewing it as a tree, there is a bi-infinite linear subset $F^\xi \subset \xi$ with the following properties
\begin{enumerate}
\item $F^{\theta_\alpha(\xi)}=F^\xi \cdot \alpha^{-1}$ whenever $\alpha \in \xi$,
\item $\text{dist}(F^\xi,1) \le 3 \vert E^0 \vert$ in the ordinary tree metric of $\xi$,
\item $\xi \mapsto F_n^\xi := F^\xi \cap \xi^{n+3\vert E^0 \vert}$ is locally constant for any $n$,
\end{enumerate}
where $\xi^k:=\{\beta \in \xi \colon \vert \beta \vert \le k\}$. It should be clear from these properties that $(F_n)$ will be a topological F{\o}lner sequence, hence $\mathcal{O}(E,C)$ will be nuclear by Theorem~\ref{thm:TopAmenThm} and Proposition~\ref{prop:FolnerImpliesTopAmen}. Specifically, define
$$F^\xi := \{\alpha \in \xi \mid r(\alpha) \in V\},$$
where $r(1):=v$ for $v$ such that $\xi \in \Omega(E,C)_v$ by convention. In order to see that $F^\xi$ enjoys the above properties, we first need to verify the following minor claim. \medskip \\
\textbf{Claim:} Let $X \in C$. If for some $e \in X$ there exists an admissible path $\gamma$ with $\ini_d(\gamma)=e^{-1}$ and $r(\gamma) \in V$, then any $f \in X$ has this property.
\begin{proof}[Proof of claim]
Let $e \in X$ and $\gamma$ be as above, and assume in order to reach a contradiction that some $f \in X$ does not have the above property. Then, since $s(f) \multimap v$ for some $v \in V$, there is an admissible path $\alpha f \colon s(f) \to v$ and a cycle $\beta \colon v \to v$ such that $\alpha^{-1}\beta\alpha$ is admissible. Now by $v \sim r(\gamma)$, the admissible path $\gamma \alpha^{-1}$ can be extended to a closed path based at $v$. But from $\alpha^{-1}\beta\alpha$ being admissible, we may then conclude that $v$ is branching, a contradiction.
\end{proof}

One immediate consequence of the claim (and the assumption that any $u$ satisfies $u \multimap v$ for some $v$) is that any $\xi \in \Omega(E,C)$ is necessarily infinite. Now observe that if an admissible path passes a vertex three times (including the source and range), then that vertex must admit a cycle. In particular, there is some $\alpha \in F^\xi$ with $\vert \alpha \vert \le 3 \vert E^0 \vert$. Another consequence of the claim is that for all $\alpha \in F^\xi$, at least two neighbours of $\alpha$ in $\xi$ are contained in $F^\xi$. It therefore only remains to check that $F^\xi$ is contained a linear subset. Assume in order to reach a contradiction that this is not the case for some $\xi \in \Omega(E,C)$. Then, by possibly replacing $\xi$ with a translate, we can find non-trivial admissible paths $\alpha_1,\alpha_2,\alpha_3 \in \xi$ with $r(\alpha_i) \in V$ such that the concatenation $\alpha_i^{-1}\alpha_j$ is admissible for $i \ne j$. We claim that $u:=s(\alpha_i) \in V$. If this were not the case, then we could take a non-trivial admissible path $\alpha \colon u \to v$ and a cycle $\beta$ at $v$ implementing the relation $u \multimap v$ for some $v \in V$. We may then apply the same argument as above in the proof of the claim to show that $v$ is branching, a contradiction. Now since $u \in V$ and $u \sim r(\alpha_i)$ for $i=1,2,3$, each $\alpha_i$ may be extended to a closed path. But then $u$ is branching -- we conclude that $F^\xi$ is indeed a bi-infinite linear subset. It is obvious that it satisfies (1) and (3), and we have already seen that (2) holds as well, thereby concluding the proof of the induction start. \medskip \\
For the inductive step, let $\vert \bigslant{V}{\sim} \vert \ge 2$ and suppose that the claim holds whenever the number of equivalence classes is at most $\vert \bigslant{V}{\sim} \vert-1$. We let the relation $\multimap$ descend to the equivalence classes $\mathfrak{u},\mathfrak{v} \in \bigslant{V}{\sim}$ by
$$ \mathfrak{u} \multimap \mathfrak{v} \Leftrightarrow \text{$u \multimap v$ for some $u \in \mathfrak{u}$ and $v \in \mathfrak{v}$},$$
and note that if $u \multimap v$, then $u' \multimap v$ for any $u' \sim u$. Now observe that $\multimap$ becomes antisymmetric on the set of equivalence classes, so there exist $\mathfrak{u},\mathfrak{v} \in \bigslant{V}{\sim}$ with $\mathfrak{u} \not\multimap \mathfrak{v}$. Defining $H:=\{u \in E^0 \mid u \not\multimap v \text{ for any } v \in \mathfrak{v}\}$, it is then straightforward to check that $H$ is a return free hereditary and $C$-saturated set. Being return free, any cycle in $(E,C)$ is properly contained in one of the graphs $(E_H,C^H)$ and $(E/H,C/H)$, and since $\mathfrak{u} \subset H$ and $\mathfrak{v} \subset (E/H)^0$, the two graphs satisfy the inductive assumption. We conclude that $\mathcal{O}(E_H,C^H)$ and $\mathcal{O}(E/H,C/H)$ are both nuclear, hence so is $\mathcal{O}(E,C)$ by Proposition~\ref{prop:ReturnFree}. This finishes the inductive step and, in turn, the proof.
\end{proof}

\begin{cexample}
Decomposing our branch free subgraph as in the above proof will leave us with the two subgraphs
\begin{center}
\begin{tikzpicture}[scale=0.70]
 \SetUpEdge[lw         = 1.5pt,
            labelcolor = white]
  \tikzset{VertexStyle/.style = {draw,shape = circle,fill = white, inner sep=4pt,minimum size=10pt,outer sep=4pt}}

  \SetVertexNoLabel

  \Vertex[x=6,y=0]{u4}
  
  \Vertex[x=6,y=3]{u9}

  \Vertex[x=6,y=-3]{u13}  
  
  \tikzset{EdgeStyle/.style = {->,color={\niceblue}}}
  \Edge[label=$2$](u9)(u4)
    
  \tikzset{EdgeStyle/.style = {->,color={\nicegreen}}}
  \Edge[label=$2$](u13)(u4)
  
\end{tikzpicture} \hspace{2cm} \begin{tikzpicture}[scale=0.70]
 \SetUpEdge[lw         = 1.5pt,
            labelcolor = white]
  \tikzset{VertexStyle/.style = {draw,shape = circle,fill = white, inner sep=4pt,minimum size=10pt,outer sep=4pt}}

  \SetVertexNoLabel

  \Vertex[x=9,y=0]{u5}
    
  \tikzset{EdgeStyle/.style = {->,color={\niceblue}}}
  \Loop[dir=EA](u5)  
  
\end{tikzpicture}
\end{center}
each of which produces nuclear $C^*$-algebras.
\end{cexample}

\begin{remark}
In the above proof, we relied on an inductive argument to prove that for a finite graph without branching vertices $(E,C)$, the quotient graph $(E/H,C/H)$ for $H:=E_{\textup{Ac}}^0$ gives rise to a nuclear $C^*$-algebra. It is also possible to prove this claim directly by construction a very natural topological F{\o}lner sequence $(F_n)$, where $F_n^\xi$ is simply the $n$-ball $\xi^n:=\{ \alpha \in \xi \colon \vert \alpha \vert \le 1 \}$. However, we prefer the above proof due to the noteworthy technical difficulties that arise when verifying the F\o lner property of this sequence.
\end{remark}

\section{The main theorem and examples}

In this final section, we give a short proof of the main theorem and finally consider a few examples of Condition (N) graphs and their graph monoids. \medskip \\
Note that quite a few of the implications in the main theorem follow from general theory. Indeed, the implications $(3) \Leftrightarrow (4) \Rightarrow (2)$ are immediate from Theorem~\ref{thm:TopAmenThm}, which is an application of the groupoid $C^*$-algebra theory of Renault and Anantharaman-Delaroche, while $(2) \Rightarrow (5)$ follows from Exel's theory of partial crossed products, specifically \cite[Proposition 25.12]{Exel}, since free groups are exact \cite[Proposition 5.1.8]{BO}.
\begin{theorem}\label{thm:MainThm}
For any finitely separated graph $(E,C)$, the following are equivalent:
\begin{enumerate}
\item[\textup{(1)}] $(E,C)$ satisfies Condition \textup{(}N\textup{)}.
\item[\textup{(2)}] The regular representation $\mathcal{O}(E,C) \to \mathcal{O}^r(E,C)$ is an isomorphism.
\item[\textup{(3)}] The $C^*$-algebra $\mathcal{O}^r(E,C)$ is nuclear.
\item[\textup{(4)}] The $C^*$-algebra $\mathcal{O}(E,C)$ is nuclear.
\item[\textup{(5)}] The $C^*$-algebra $\mathcal{O}(E,C)$ is exact.
\item[\textup{(6)}] Every stabiliser of the partial action $\theta^{(E,C)}$ is amenable \textup{(}hence trivial or cyclic\textup{)}.
\end{enumerate}
\end{theorem}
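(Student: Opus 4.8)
The plan is to close the cycle of implications
$$(1) \Longrightarrow (4) \Longrightarrow (3) \Longrightarrow (2) \Longrightarrow (5) \Longrightarrow (6) \Longrightarrow (1),$$
taking advantage of the fact that most of these are already available. Indeed, $(3)\Leftrightarrow(4)$ and $(4)\Rightarrow(2)$ are immediate from Theorem~\ref{thm:TopAmenThm}: nuclearity of the full — equivalently the reduced — crossed product forces topological amenability of $\theta^{(E,C)}$, and in that case the regular representation is an isomorphism, which is exactly statement $(2)$. Next, $(2)\Rightarrow(5)$ follows from \cite[Proposition 25.12]{Exel} together with exactness of the free group $\mathbb{F}$ \cite[Proposition 5.1.8]{BO}, because under $(2)$ we have $\mathcal{O}(E,C)\cong C_0(\Omega(E,C))\rtimes_r\mathbb{F}$ with $C_0(\Omega(E,C))$ commutative, hence exact. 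Finally, $(5)\Rightarrow(6)$ and $(6)\Rightarrow(1)$ are precisely the two implications $(1)\Rightarrow(2)$ and $(2)\Rightarrow(3)$ of Proposition~\ref{prop:Exactness}. Thus the only implication still needing the work of Sections~3 and~4 is $(1)\Rightarrow(4)$.

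To prove $(1)\Rightarrow(4)$, assume $(E,C)$ satisfies Condition (N) and put $H:=E_{\textup{BF}}^0$. By Lemma~\ref{lem:ReturnFree}, $H\in\mathcal{H}(E,C)$ is return free. By construction the full subgraph $(E_H,C^H)$ is the branch free subgraph $(E_{\textup{BF}},C^{\textup{BF}})$, so $\mathcal{O}(E_H,C^H)$ is nuclear by Theorem~\ref{thm:CoNucl} (which needs no hypothesis). Moreover $E^0\setminus H=E_{\textup{Br}}^0$, so the quotient graph $(E/H,C/H)$ is precisely the branching subgraph $(E_{\textup{Br}},C^{\textup{Br}})$; since $(E,C)$ satisfies Condition (N), Theorem~\ref{thm:SpGrFølner} produces a proper orientation of $(E_{\textup{Br}},C^{\textup{Br}})$, whence $\mathcal{O}(E/H,C/H)$ is nuclear. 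Proposition~\ref{prop:ReturnFree} (invoking Theorem~\ref{thm:Quotient} for the defining short exact sequence) then yields nuclearity of $\mathcal{O}(E,C)$, i.e.~$(4)$. This closes the cycle, so all six statements are equivalent.

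At this stage there is no serious obstacle remaining: the deep inputs — the groupoid-theoretic Theorem~\ref{thm:TopAmenThm}, the construction of a proper orientation of the branching subgraph in the presence of Condition (N) (Theorem~\ref{thm:SpGrFølner}), and the unconditional nuclearity of the branch free subgraph (Theorem~\ref{thm:CoNucl}) — have all been established, and the main theorem is an assembly of these together with Proposition~\ref{prop:Exactness}. The one point that must be got right is the identification, for $H=E_{\textup{BF}}^0$, of the quotient graph $(E/H,C/H)$ with the branching subgraph, so that Theorem~\ref{thm:SpGrFølner} applies to the quotient appearing in Proposition~\ref{prop:ReturnFree}; everything else is bookkeeping with the implications listed above.
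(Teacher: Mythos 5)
Your proposal is correct and follows essentially the same route as the paper: it closes the same cycle of implications using Theorem~\ref{thm:TopAmenThm}, \cite[Proposition 25.12]{Exel}, and Proposition~\ref{prop:Exactness}, and establishes $(1)\Rightarrow(4)$ via the decomposition $H=E_{\textup{BF}}^0$ with Lemma~\ref{lem:ReturnFree}, Theorem~\ref{thm:CoNucl}, Theorem~\ref{thm:SpGrFølner} and Proposition~\ref{prop:ReturnFree}. The identification of $(E/H,C/H)$ with $(E_{\textup{Br}},C^{\textup{Br}})$, which you rightly flag as the point to verify, is immediate since both are by definition the full subgraph on $E^0\setminus E_{\textup{BF}}^0=E_{\textup{Br}}^0$.
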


\begin{proof}
We have already proven $(5) \Rightarrow (6) \Rightarrow (1)$ in Proposition~\ref{prop:Exactness}, so in light of the above comments, it remains only to verify $(1) \Rightarrow (4)$. But by Proposition~\ref{prop:ReturnFree} and Lemma~\ref{lem:ReturnFree}, it suffices to show that $\mathcal{O}(E_{\textup{Br}},C^{\textup{Br}})$ and $\mathcal{O}(E_{\textup{BF}},C^{\textup{BF}})$ are nuclear, and this was done in Theorem~\ref{thm:SpGrFølner} and Theorem~\ref{thm:CoNucl}, respectively.
\end{proof}

We obtain the following purely graph-theoretic consequence of the above theorem. Recall from \cite{AE} that if $(E,C)$ is finite and bipartite, then one can associate a sequence $(E_n,C^n)$ of finite bipartite graphs to $(E,C)=(E_0,C^0)$, such that $\mathcal{O}(E,C) \cong \varinjlim_n C^*(E_n,C^n)$ for certain connecting homomorphisms.

\begin{corollary}\label{cor:CondNBip}
Let $(E,C)$ denote a finite bipartite graph. Then $(E_n,C^n)$ satisfies Condition \textup{(}N\textup{)} for some $n$ if and only if it does so for all $n$.
\end{corollary}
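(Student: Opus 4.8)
The plan is to deduce this from Theorem~\ref{thm:MainThm} together with the inductive limit description $\mathcal{O}(E,C)\cong\varinjlim_k C^*(E_k,C^k)$ recalled above. Since each $(E_n,C^n)$ is again a finite bipartite separated graph, Theorem~\ref{thm:MainThm} applies to it directly and shows that $(E_n,C^n)$ satisfies Condition (N) if and only if the $C^*$-algebra $\mathcal{O}(E_n,C^n)$ is nuclear. It therefore suffices to prove that the algebras $\mathcal{O}(E_n,C^n)$ are mutually isomorphic; in fact I would show that each of them is isomorphic to $\mathcal{O}(E,C)$, so that nuclearity of one is nuclearity of all.

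To see this, note that the sequence of finite bipartite graphs associated with $(E_n,C^n)$ is precisely the tail $\big((E_{n+k},C^{n+k})\big)_{k\ge 0}$ of the sequence associated with $(E,C)$: the passage $(F,D)\mapsto(F_1,D^1)$ and the accompanying connecting homomorphism $C^*(F,D)\to C^*(F_1,D^1)$ depend only on the graph $(F,D)$, not on how it was produced. Consequently
\[
\mathcal{O}(E_n,C^n)\;\cong\;\varinjlim_{k\ge 0}C^*\big((E_n)_k,(C^n)^k\big)\;=\;\varinjlim_{k\ge n}C^*(E_k,C^k),
\]
with the same connecting maps as in the defining sequence for $\mathcal{O}(E,C)$. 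As $\{k\ge n\}$ is cofinal in $\{k\ge 0\}$, the two inductive limits agree, so $\mathcal{O}(E_n,C^n)\cong\mathcal{O}(E,C)$ for every $n$. Combined with the previous paragraph, $(E_n,C^n)$ satisfies Condition (N) if and only if $\mathcal{O}(E,C)$ is nuclear, a condition which does not involve $n$, which is exactly the assertion of the corollary.

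There is no substantial obstacle here beyond the bookkeeping: one must confirm that the connecting homomorphisms in the tail sequence are indeed the ones produced by the Ara--Exel construction (they are, by the deterministic/functorial nature of that construction), and that Theorem~\ref{thm:MainThm} is legitimately invoked for each intermediate graph $(E_n,C^n)$ and not only for $(E,C)$. Should one prefer to avoid the inductive limit description, an alternative is to observe that the partial actions $\theta^{(E_n,C^n)}$ are mutually Kakutani equivalent and to appeal to Morita invariance of nuclearity; but the cofinality argument above appears to be the most economical route.
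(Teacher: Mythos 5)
Your proposal is correct and follows the paper's own argument: the paper likewise deduces the corollary from Theorem~\ref{thm:MainThm} together with the isomorphisms $\mathcal{O}(E_m,C^m)\cong\mathcal{O}(E_n,C^n)$, which it cites as known from the Ara--Exel construction rather than re-deriving via the cofinality argument you spell out.
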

\begin{proof}
This is immediate from Theorem~\ref{thm:MainThm} since $\mathcal{O}(E_m,C^m) \cong \mathcal{O}(E_n,C^n)$ for all $m,n$.
\end{proof}

We finally consider some examples.

\begin{example}
If $\vert  s^{-1}(v) \vert + \vert C_v \vert \le 2$ for any $v \in E^0$, then $(E,C)$ contains no branching vertices and therefore satisfies Condition (N) trivially. This covers examples such as
\begin{center}
\begin{tikzpicture}[scale=0.6]
 \SetUpEdge[lw         = 1.5pt,
            labelcolor = white]
  \tikzset{VertexStyle/.style = {draw,shape = circle,fill = white, inner sep=4pt,minimum size=10pt,outer sep=4pt}}

  \SetVertexNoLabel
  \Vertex[x=0,y=0]{u}
  \Vertex[x=-3,y=-3]{v1}
  \Vertex[x=3,y=-3]{v2}  
  
  \tikzset{EdgeStyle/.style = {->,bend left,color={\nicered}}}
  \Edge[](v1)(u)
  \tikzset{EdgeStyle/.style = {->,bend right,color={\nicered}}}
  \Edge[](v1)(u)    
  \tikzset{EdgeStyle/.style = {->,bend left,color={\niceblue}}}
  \Edge[](v2)(u)  
  \tikzset{EdgeStyle/.style = {->,bend right,color={\niceblue}}}  
  \Edge[](v2)(u)  
\end{tikzpicture} \hspace{0.5cm} \begin{tikzpicture}[scale=0.6]
 \SetUpEdge[lw         = 1.5pt,
            labelcolor = white]
  \tikzset{VertexStyle/.style = {draw,shape = circle,fill = white, inner sep=4pt,minimum size=10pt,outer sep=4pt}}

  \SetVertexNoLabel
  \Vertex[x=0,y=0]{u}
  \Vertex[x=-3,y=-3]{v1}
  \Vertex[x=0,y=-3]{v3}
  \Vertex[x=3,y=-3]{v2}

  \tikzset{EdgeStyle/.style = {->,bend left,color={\nicered}}}
  \Edge[](v1)(u)
  \Edge[](v3)(u)    
  \tikzset{EdgeStyle/.style = {->,bend right,color={\niceblue}}}  
  \Edge[](v2)(u)  
  \Edge[](v3)(u)  
\end{tikzpicture}
\hspace{0.5cm}
\begin{tikzpicture}[scale=0.6]
 \SetUpEdge[lw         = 1.5pt,
            labelcolor = white]
  \tikzset{VertexStyle/.style = {draw,shape = circle,fill = white, inner sep=4pt,minimum size=10pt,outer sep=4pt}}

  \SetVertexNoLabel
  \Vertex[x=0,y=0]{u}
  \Vertex[x=-3,y=-3]{v1}
  \Vertex[x=3,y=-3]{v2}  
  
  \tikzset{EdgeStyle/.style = {->,bend left,color={\nicered}}}
  \Edge[](v1)(u)
  \Edge[](v2)(u)    
  \tikzset{EdgeStyle/.style = {->,bend right,color={\niceblue}}}  
  \Edge[](v1)(u)  
  \Edge[](v2)(u)  
\end{tikzpicture},
\end{center}
which were considered in \cite[Example 9.5, 9.6, 9.7]{AE} as well as \cite[Example 6.7]{AL}.
\end{example}

\begin{example}
If $E$ is any column-finite graph, then $E$ clearly satisfies Condition (N) when regarded as a trivially separated graph. In fact, $\mathfrak{o}(e):=-1$ for all $e \in E^1$ defines an orientation, but unless $E$ contains no sources, it is not a proper orientation. This can be circumvented in two different ways: Either one mods out by the acyclic subgraph as we have done above, or one adds heads to all sources. Either way, the new graph will admit a proper orientation, giving another proof of nuclearity of classical graph $C^*$-algebras of column-finite graphs.
\end{example}

\begin{example}
If $(E,C):=(E(m,n),C(m,n))$ for some $2 \le m \le n <\infty$ is the graph

\begin{center}
\begin{tikzpicture}[scale=0.8]
 \SetUpEdge[lw         = 1.5pt,
            labelcolor = white]
  \tikzset{VertexStyle/.style = {draw, shape = circle,fill = white, minimum size=15pt, inner sep=2pt,outer sep=1pt}}

  \Vertex[x=0,y=0]{u}
  \Vertex[x=0,y=3]{v}

  \tikzset{EdgeStyle/.style = {->,bend left=40,color={\nicered}}}  
  \Edge[label=$m$](u)(v)

  \tikzset{EdgeStyle/.style = {->,bend right=40,color={\niceblue}}}  
  \Edge[label=$n$](u)(v)

\end{tikzpicture}
\end{center}
as in \cite[Example 9.3]{AE}, so that $(E,C)$ gives rise to the $(m,n)$-dynamical systems of \cite{AEK}, then $u$ is a branching vertex without a local orientation. We therefore recover the fact from \cite[Theorem 7.2]{AEK} that $\mathcal{O}_{m,n}:=\mathcal{O}(E,C)$ is non-exact.  \exend
\end{example}

For every finitely separated graph $(E,C)$, there is a natural \textit{graph monoid} $M(E,C)$ as defined in \cite{AG2}: It is the universal abelian monoid with generators $E^0$ and relations $v = \sum_{e \in X}s(e)$ for all $v \in E^0$ and $X \in C_v$. By \cite[Proposition 4.4]{AG2}, every conical abelian monoid can be represented as $M(E,C)$ for an appropriate finitely separated bipartite graph $(E,C)$. Conversely, the graph monoid $M(E)$ of a non-separated column-finite graph is always quite well-behaved: Whenever $E$ is finite, $M(E)$ is a refinement monoid by \cite[Proposition 4.4]{AMP}, hence primely generated by \cite[Corollary 6.8]{Brookfield}. It follows that $M(E)$ satisfies the extensive list of properties given by \cite[Theorem 5.19]{Brookfield}, of which many pass to direct limits. Since any column-finite graph $E$ is a direct limit of its finite complete subgraphs and the assignment $E \mapsto M(E)$ is continuous by \cite[Lemma 3.4]{AMP}, the monoid $M(E)$ satisfies all such properties. Among these are
\begin{itemize}
\item \textit{Unperforation}: If $n \cdot a \le n \cdot b$, then $a \le b$ for all integers $n \ge 2$.
\item \textit{Pseudo-cancellation}: If $a + c \le b + c$, then there is some $a_1$ with $a_1 + c \le c$ for which $a \le b+ a_1$.
\end{itemize}
Below we shall see two basic examples of finite bipartite separated graphs satisfying Condition (N) for which the graph monoids do not enjoy unperforation and pseudo-cancellation, respectively.
\medskip \\
Recall that $M(E) \cong \mathcal{V}(L_K(E)) \cong \mathcal{V}(C^*(E))$ by \cite[Theorem 3.5 and Theorem 7.1]{AMP}. In the separated setting, we have $M(E,C) \cong \mathcal{V}(L_K(E,C))$ due to \cite[Theorem 4.3]{AG2}, and the quotient map $L_K(E,C) \to L_K^{\textup{ab}}(E,C)$ induces a refinement $\mathcal{V}(L_K(E,C)) \to \mathcal{V}(L_K^{\textup{ab}}(E,C))$ by \cite[Corollary 5.9]{AE}. However, it is still an open problem whether the inclusion $L_\mathbb{C}(E,C) \hookrightarrow C^*(E,C)$ induces an isomorphism $\mathcal{V}(L_\mathbb{C}(E,C)) \to \mathcal{V}(C^*(E,C))$. If this happens to be the case, then $\mathcal{V}(L_\mathbb{C}^{\textup{ab}}(E,C)) \cong \mathcal{V}(\mathcal{O}(E,C))$ as well by \cite[Theorem 5.7]{AE}, in which case there will be a natural refinement $M(E,C) \to \mathcal{V}(\mathcal{O}(E,C))$.

\begin{example}\label{ex:Unperforation}
Consider the Condition (N) graph
\begin{center}
\begin{tikzpicture}[scale=0.6]
 \SetUpEdge[lw         = 1.5pt,
            labelcolor = white]
  \tikzset{VertexStyle/.style = {draw,shape = circle,fill = white, inner sep=4pt,minimum size=10pt,outer sep=4pt}}

  \Vertex[x=0,y=0]{w}
  \Vertex[x=-3,y=-3]{u}
  \Vertex[x=3,y=-3]{v}  
  
  \tikzset{EdgeStyle/.style = {->,bend left,color={\nicered}}}
  \Edge[label=$e_0$, style={circle,inner sep=0pt }](u)(w)
  \tikzset{EdgeStyle/.style = {->,bend right,color={\nicered}}}
  \Edge[label=$e_1$, style={circle,inner sep=0pt }](u)(w)    
  \tikzset{EdgeStyle/.style = {->,bend left,color={\niceblue}}}
  \Edge[label=$f_0$, style={circle,inner sep=0pt }](v)(w)  
  \tikzset{EdgeStyle/.style = {->,bend right,color={\niceblue}}}  
  \Edge[label=$f_1$, style={circle,inner sep=0pt }](v)(w)  
\end{tikzpicture}
\end{center}
of \cite[Example 9.5]{AE} and denote it by $(E,C)$. Then
$$M(E,C) = \langle u,v,w \mid 2u=w=2v \rangle \cong \langle u, v \mid 2u=2v \rangle,$$
but $u \not\le v$ and $v \not\le u$, so $M(E,C)$ is not unperforated. It follows from \cite[Theorem 7.4]{AE} that the relative type semigroup $S(\Omega(E,C),\mathbb{F},\mathbb{K})$ of the partial action $\theta^{(E,C)}$ is isomorphic to $\mathcal{V}(L_K^{\textup{ab}}(E,C))$, so in particular, $M(E,C)$ unitarily embeds into the type semigroup, and this will be unperforated as well. We will now identify the partial action $\theta^{(E,C)}$ up to Kakutani equivalence (see \cite[Definition 2.14]{Li2}) with a concrete partial $\mathbb{F}_2$-action. Consider the sequence space $\mathcal{X}:=\{0,1\}^\mathbb{Z}$ along with the clopen subspaces
$$\mathcal{X}_{i \LargerDot}:=\{x \in \mathcal{X} \mid x_{-1}=i\} \andspace \mathcal{X}_{\LargerDot i}:=\{x \in \mathcal{X} \mid x_0=i\}$$
for $i=0,1$. We then define partial homeomorphisms $\varphi_L \colon X_{0\LargerDot} \to X_{1\LargerDot}$ and $\varphi_R \colon X_{\LargerDot 0} \to X_{\LargerDot1}$ given by
\begin{align*}
\varphi_L(\ldots x_{-3}x_{-2}0\LargerDot x_0x_1x_2 \ldots)&:=(\ldots x_{1}x_{0}1 \LargerDot x_{-2}x_{-3}x_{-4} \ldots) \\
\varphi_R(\ldots x_{-3}x_{-2}x_{-1}\LargerDot 0x_1x_2 \ldots)&:=(\ldots x_{3}x_{2}x_{1} \LargerDot 1x_{-1}x_{-2}x_{-3} \ldots)
\end{align*}
and consider the semi-saturated partial action $\theta \colon \mathbb{F}[L,R] \act \mathcal{X}$ induced by $\varphi_L$ and $\varphi_R$. Then $\theta$ is quasi-conjugate (see \cite[Definition 2.2]{Lolk}) to the restriction $\theta^{(E,C)}\vert_\Omega$ of $\theta^{(E,C)}$ to the full (see \cite[Definition 3.24]{AL}) clopen subspace $\Omega:=\Omega(E,C)_w$ as follows. Observe first that every configuration $\xi \in \Omega$ may be represented by an ordered pair $(\xi^{red},\xi^{blue})$ of infinite admissible paths
$$ \xi^{red} := \ldots f_{\underline{i_{-4}}} f_{i_{-4}}^{-1} e_{\underline{i_{-3}}}e_{i_{-3}}^{-1}f_{\underline{i_{-2}}} f_{i_{-2}}^{-1}e_{\underline{i_{-1}}}e_{i_{-1}}^{-1} \andspace \xi^{blue}:=\ldots e_{\underline{i_3}} e_{i_3}^{-1} f_{\underline{i_2}}f_{i_2}^{-1} e_{\underline{i_1}}e_{i_1}^{-1}f_{\underline{i_0}}f_{i_0}^{-1} \ ,$$
for $i_k \in \{0,1\}$, where $\underline{0}:=1$ and $\underline{1}:=0$. $\xi^{red}$ represents the part of the configuration that initially travels by the inverse of a red edge, while $\xi^{blue}$ initially travels by the inverse of a blue edge. An element $\alpha \in \mathbb{F}$ may act on $\xi$ if and only if $\alpha \le \xi^{red}$ or $\alpha \le \xi^{blue}$, and the result $\theta^{(E,C)}_\alpha(\xi)$ lies in $\Omega$ if and only if $\alpha$ is of even length. If $\alpha$ denotes the simple cycle $\alpha=e_{\underline{i_{-1}}}e_{-1}^{-1}$, we have
$$\theta^{(E,C)}_\alpha(\xi)^{red}=\ldots f_{\underline{i_2}}f_{i_2}^{-1} e_{\underline{i_1}}e_{i_1}^{-1}f_{\underline{i_0}}f_{i_0}^{-1}e_{i_{-1}}e_{\underline{i_{-1}}}^{-1} \andspace \theta^{(E,C)}_\alpha(\xi)^{blue}=\ldots f_{\underline{i_{-4}}} f_{i_{-4}}^{-1} e_{\underline{i_{-3}}}e_{i_{-3}}^{-1}f_{\underline{i_{-2}}} f_{i_{-2}}^{-1},$$
and a similar situation occurs for $\alpha=f_{\underline{i_0}}f_{i_0}^{-1}$. Now consider the group homomorphism $\Psi \colon \mathbb{F}[L,R] \to \mathbb{F}$ given by $\Psi(L)= e_1^{-1}e_0$ and $\Psi(R)= f_1^{-1}f_0$, as well as the homeomorphism $\psi \colon \mathcal{X} \to \Omega$ defined by $\psi(x)= (\xi_x^{red},\xi_x^{blue})$, where
$$ \xi_x^{red} := \ldots f_{\underline{x_{-4}}} f_{x_{-4}}^{-1} e_{\underline{x_{-3}}}e_{x_{-3}}^{-1}f_{\underline{x_{-2}}} f_{x_{-2}}^{-1}e_{\underline{x_{-1}}}e_{x_{-1}}^{-1} \andspace \xi_x^{blue}:=\ldots e_{\underline{x_3}} e_{x_3}^{-1} f_{\underline{x_2}}f_{x_2}^{-1} e_{\underline{x_1}}e_{x_1}^{-1}f_{\underline{x_0}}f_{x_0}^{-1}.$$
It is clear from our above observation that the pair $(\psi, \Psi)$ defines a quasi-conjugacy $\theta \to \theta^{(E,C)}\vert_{\Omega}$, so $\theta$ and $\theta^{(E,C)}$ are indeed Kakutani equivalent.
\end{example}

\begin{example}\label{ex:PseudoCancellation}
Finally, consider the graph
\begin{center}

\begin{tikzpicture}[scale=0.6]
 \SetUpEdge[lw         = 1.5pt,
            labelcolor = white]
  \tikzset{VertexStyle/.style = {draw,shape = circle,fill = white, inner sep=4pt,minimum size=10pt,outer sep=4pt}}

  \Vertex[x=0,y=0]{w}
  \Vertex[x=-3,y=-3]{u}
  \Vertex[x=3,y=-3]{v}  
  
  \tikzset{EdgeStyle/.style = {->,bend left,color={\nicered}}}
  \Edge[](u)(w)
  \tikzset{EdgeStyle/.style = {->,bend right,color={\niceblue}}}
  \Edge[label=$m$](u)(w)    
  \tikzset{EdgeStyle/.style = {->,bend left,color={\niceblue}}}
  \Edge[label=$n$](v)(w) 
  \tikzset{EdgeStyle/.style = {->,bend right,color={\nicered}}}  
  \Edge[](v)(w)  
\end{tikzpicture}
\end{center}
with $m \ge 2$, $n \ge 1$, and denote it by $(E,C)$. Then
$$M(E,C) = \langle u,v,w \mid w=u+v=m \cdot u+ n \cdot v \rangle \cong \langle u,v \mid u+v = m \cdot u+n \cdot v \rangle,$$
and we claim that $M(E,C)$ does not enjoy pseudo-cancellation as above with 
$$a:=(m-1) \cdot u, \quad b:=v \andspace c:=u.$$
These elements surely satisfy the assumption, namely that 
$$a+c=m \cdot u \le m \cdot u + n \cdot v = u+v = b+c,$$
so if pseudo-cancellation were present, then there should exist some $a_1 \in M(E,C)$ with $a_1+c \le c$ and $a \le b+a_1$. However, the first inequality entails $a_1=0$, hence
$$(m-1) \cdot u = a \le b = v,$$
which is absurd. It follows that the type semigroup $S(\Omega(E,C),\mathbb{F},\mathbb{K})$ is not pseudo-cancellative either. Contrary to Example~\ref{ex:Unperforation}, we have no simple description of the partial action $\theta^{(E,C)}$ as the configurations are much more complicated in this case. \exend
\end{example}

Example~\ref{ex:Unperforation} and Example~\ref{ex:PseudoCancellation} show that one should expect to find nuclear tame separated graph $C^*$-algebras of a rather different nature than that of classical graph $C^*$-algebras, in particular with a more general projection structure. However, while both unperforation and pseudo-cancellation may fail in the graph monoid $M(E,C)$ of a Condition (N) graph, the author expects that it will always enjoy the following important cancellation properties:
\begin{itemize}
\item\textit{Almost unperforation}: If $(n+1) \cdot a \le n \cdot b$ for some $n \ge 2$, then $a \le b$.
\item\textit{Separation}: If $2a=a+b=2b$, then $a=b$.
\end{itemize}
Observe that if any monoid theoretic property, which passes to limits, holds for all finite bipartite Condition (N) graphs $(E,C)$, then it will automatically hold for 
$$S(\Omega(E,C),\mathbb{F},\mathbb{K}) \cong \mathcal{V}(L^{ab}(E,C)) \cong \varinjlim_n M(E_n,C^n)$$
as well by Corollary~\ref{cor:CondNBip} and \cite[Corollary 5.9]{AE}. As a consequence, the author expects $\theta^{(E,C)}$ to not be topologically amenable whenever it presents a counterexample to the topological version of Tarski's theorem considered in \cite[Section 7]{AE}.

\section*{Acknowledgements}
The author would like to thank Pere Ara for many fruitful discussions related to this work, part of which was carried out during a stay at Universitat Aut\`onoma de Barcelona.

\bibliographystyle{plain}
\bibliography{ref}

\begin{thebibliography}{10}

\bibitem{Abadie}
F.~Abadie.
\newblock Enveloping actions and {T}akai duality for partial actions.
\newblock {\em J. Funct. Anal.}, 197(1):14--67, 2003.

\bibitem{Abadie2}
F.~Abadie.
\newblock On partial actions and groupoids.
\newblock {\em Proc. Amer. Math. Soc.}, 132(4):1037--1047 (electronic), 2004.

\bibitem{DR}
C.~Anantharaman-Delaroche and J.~Renault.
\newblock {\em Amenable groupoids}, volume~36 of {\em Monographies de
  L'Enseignement Math\'ematique [Monographs of L'Enseignement Math\'ematique]}.
\newblock L'Enseignement Math\'ematique, Geneva, 2000.
\newblock With a foreword by G. Skandalis and Appendix B by E. Germain.

\bibitem{AE}
P.~Ara and R.~Exel.
\newblock Dynamical systems associated to separated graphs, graph algebras, and
  paradoxical decompositions.
\newblock {\em Adv. Math.}, 252:748--804, 2014.

\bibitem{AE2}
P.~Ara and R.~Exel.
\newblock {$K$}-theory for the tame {$\rm C^*$}-algebra of a separated graph.
\newblock {\em J. Funct. Anal.}, 269(9):2995--3041, 2015.

\bibitem{AEK}
P.~Ara, R.~Exel, and T.~Katsura.
\newblock Dynamical systems of type {$(m,n)$} and their {$\rm C^*$}-algebras.
\newblock {\em Ergodic Theory Dynam. Systems}, 33(5):1291--1325, 2013.

\bibitem{AG}
P.~Ara and K.~R. Goodearl.
\newblock {$C^\ast$}-algebras of separated graphs.
\newblock {\em J. Funct. Anal.}, 261(9):2540--2568, 2011.

\bibitem{AG2}
P.~Ara and K.~R. Goodearl.
\newblock Leavitt path algebras of separated graphs.
\newblock {\em J. Reine Angew. Math.}, 669:165--224, 2012.

\bibitem{AL}
P.~Ara and M.~Lolk.
\newblock Convex subshifts, separated bratteli diagrams, and ideal structure of
  tame separated graph algebras.
\newblock {\em ArXiv e-prints}, 2017.

\bibitem{AMP}
P.~Ara, M.~A. Moreno, and E.~Pardo.
\newblock Nonstable {$K$}-theory for graph algebras.
\newblock {\em Algebr. Represent. Theory}, 10(2):157--178, 2007.

\bibitem{Brookfield}
G.~Brookfield.
\newblock Cancellation in primely generated refinement monoids.
\newblock {\em Algebra Universalis}, 46(3):343--371, 2001.

\bibitem{BO}
N.~P. Brown and N.~Ozawa.
\newblock {\em {$C^*$}-algebras and finite-dimensional approximations},
  volume~88 of {\em Graduate Studies in Mathematics}.
\newblock American Mathematical Society, Providence, RI, 2008.

\bibitem{Exel}
R.~Exel.
\newblock {\em Partial Dynamical Systems, Fell Bundles and Applications}.
\newblock 2014.

\bibitem{ES}
R.~Exel and C.~Starling.
\newblock Amenable actions of inverse semigroups.
\newblock {\em Ergodic Theory Dynam. Systems}, 37(2):481--489, 2017.

\bibitem{Li2}
X.~Li.
\newblock Dynamic characterizations of quasi-isometry, and applications to
  cohomology.
\newblock {\em ArXiv e-prints}.

\bibitem{Li}
X.~Li.
\newblock Partial transformation groupoids attached to graphs and semigroups.
\newblock {\em ArXiv e-prints}.

\bibitem{Lolk}
M.~Lolk.
\newblock Exchange rings and real rank zero {$C^*$}-algebras associated with
  finitely separated graphs.
\newblock {\em ArXiv e-prints}, 2017.

\bibitem{Renault3}
J.~Renault.
\newblock {\em A groupoid approach to {$C\sp{\ast} $}-algebras}, volume 793 of
  {\em Lecture Notes in Mathematics}.
\newblock Springer, Berlin, 1980.

\bibitem{Renault}
J.~Renault.
\newblock Topological amenability is a {B}orel property.
\newblock {\em Math. Scand.}, 117(1):5--30, 2015.

\bibitem{RS}
M.~R{\o}rdam and A.~Sierakowski.
\newblock Purely infinite {$C^*$}-algebras arising from crossed products.
\newblock {\em Ergodic Theory Dynam. Systems}, 32(1):273--293, 2012.

\end{thebibliography}

\end{document}